\documentclass[12pt]{amsart}
\usepackage{amsmath,amsfonts,amssymb,amsthm,amstext,pgf,graphicx,hyperref,verbatim,lmodern,textcomp,color,young,tikz}
\usetikzlibrary{decorations}
\usepackage[mathscr]{euscript}
\usetikzlibrary{decorations.markings}
\usetikzlibrary{arrows}
\usepackage{float}
\theoremstyle{plain}
\newtheorem{theorem}{Theorem}[section]
\newtheorem{lemma}[theorem]{Lemma}
\newtheorem{proposition}[theorem]{Proposition}
\newtheorem{corollary}[theorem]{Corollary}
\newtheorem{rem}[theorem]{Remark}

\theoremstyle{definition}

\newcommand{\Dom}{\mathrm{Dom}}
\newcommand{\cent}{\mathrm{cent}}

\title{}
\begin{document}
\title[Centralizer in finite groups and Domination number of their commuting graphs]{Centralizers in finite groups and Domination number of their commuting graphs} 
\author[Sudip Bera]{Sudip Bera}
\author[Hiranya Kishore Dey]{Hiranya Kishore Dey}
\author[Umang Jethva]{Umang Jethva}
\address[Sudip Bera]{Faculty of Mathematics, Dhirubhai Ambani University, Gandhinagar, India.}
\email{sudip\_bera@dau.ac.in}
\address[Hiranya Kishore Dey]{Department of Mathematics, Indian  Institute of Technology, Jammu, India.}
\email{hiranya.dey@iitjammu.ac.in} 
\address[Umang Jethva]{Dhirubhai Ambani University, Gandhinagar, India.}
\email{202421027@dau.ac.in} 
\keywords{Commuting graph; Domination number; Total domination number; Group}
\subjclass[2010]{05C25; 20D15; 20D60}
\maketitle	

\begin{abstract}
The proper commuting graph $\mathcal{C}^{**}(G)$ of a finite group $G$ is the simple graph whose vertices
are the noncentral elements of $G$ and two distinct vertices are adjacent if they commute. 
In this paper, we study the domination number and total domination number of proper commuting graphs of finite groups.
We first obtain general bounds for the domination number of proper commuting graphs.
For finite nilpotent groups, we exploit a strong product decomposition of commuting graphs to derive exact formulas for the domination number. We further determine
the exact domination number and total domination number for proper commuting graphs of several well-known families of finite groups, connecting with the centralizers of those groups. 
\end{abstract}
\section{Introduction}

The study of graphs associated with various algebraic structures has been a topic 
of active investigation during the last two decades. Such studies provide important insights into the interplay between algebraic structures and 
graph theoretic properties.
Several graphs have been introduced to explore algebraic structures using graph theory, including commuting graphs
of groups, power graphs of groups and semigroups \cite{surveypwrgraphkac1,undpwrgraphofsemgmainsgc1,compropofsemgrpkq4}, the 
centralizer graph \cite{lewis2026coveringingroups}, enhanced power graphs of groups \cite{firstenhcedpwrstrctreaacns1,enhancedpwrgrapbb3,BeraDey2022epow,bera-dey-sajal}, among others.

In this paper, we focus on the \emph{proper commuting graph of a group}. Let $G$ be a finite group, then the \emph{commuting graph} of $G$, denoted by $\mathcal{C}(G)$, is the simple graph whose vertex set is $G$, where two distinct vertices $x$ and $y$ are adjacent if and only if $xy = yx$. For a group $G,$ we denote by $e$ the identity element of $G$ and by $G^{**}$ the set $G\setminus Z(G),$ where $Z(G)$ is the center of the group $G.$ The induced subgraph of $\mathcal{C}(G)$ with the vertex set $G^{**}$ is called the {\it proper commuting graph} of $G$ and we shall denote it by $\mathcal{C}^{**}(G).$ 

Commuting graphs and 
their variants have been widely investigated due to their strong connections with group
structure \cite{ijraeljofmathematics, braurflower, onboundingdiamcommuting,haji2019groups}.
In particular, graph-theoretic parameters 
such as connectivity, diameter, 
and domination number have received considerable attention \cite{Araujo2011Minimal, giudici, IranmaneshJafarzadeh2008, MaCameron2024,SegevSeitz2002}. 

Haji and Amiri \cite{haji2019groups} studied the domination number of proper commuting graphs 
and established several notable results. Building on their work, we further examine domination 
in proper commuting graphs,
with an emphasis on how the algebraic structure of a group governs these properties.

\subsection{Basic Definitions and Notations}

Let $\Gamma$ be a finite simple graph with vertex set $V$ and edge set $E.$
For a vertex $v \in V$, 
the \emph{neighbourhood} of $v$ is the set $N(v)$ consisting of all vertices of $\Gamma$ that are adjacent to $v$. The \emph{closed neighbourhood} of $v$ is the set $N[v] = N(v) \cup \{v\}$.
A vertex $v \in V$ is
called a \emph{dominating vertex} of $\Gamma$ if its closed neighbourhood is $V$, that is, $N[v] = V$. A subset $X \subseteq V$ is called a \emph{dominating set} of $\Gamma$ if for every vertex $v \in V \setminus X$, there exists a vertex $u \in X$ such that $v$ is adjacent to $u$. Clearly, $V$ itself 
is a dominating set of $\Gamma$. 
If $\Gamma$ has no edges, then $V$ is the only dominating set of $\Gamma$.
A dominating set $X$ of $\Gamma$ is called a \emph{minimum dominating set} if $|X| \leq |Y|$ for every dominating set $Y$ of $\Gamma$. The cardinality of a minimum dominating set of $\Gamma$ is called the \emph{domination number} of $\Gamma$, and it is denoted by $\gamma(\Gamma)$. 
For a graph $\Gamma$, let $\Dom(\Gamma)$ denote the set of all universal
(dominating) vertices of $\Gamma$. Observe that $\Dom(\Gamma)\neq \emptyset$ if and only if $\gamma(\Gamma)=1$.

A set $T \subseteq V$ is called a \emph{total dominating set} if for every 
$v \in V$ there exists $u \in T$ such that $uv \in E$. 
The minimum cardinality of such a set is called the 
\emph{total domination number} of $\Gamma$ and is denoted by 
$\gamma_t(\Gamma)$. In this paper, our goal is to contribute to the study of the domination number and the total domination number of the proper commuting graph of group.

 Let $\Gamma_1=(V(\Gamma_1),E(\Gamma_1))$ and $\Gamma_2=(V(\Gamma_2),E(\Gamma_2))$ be simple graphs.
The \emph{strong product} of $\Gamma_1$ and $\Gamma_2$, denoted by $\Gamma_1 \otimes \Gamma_2$, is the graph
with the vertex set
\[
V(\Gamma_1 \otimes \Gamma_2)=V(\Gamma_1)\times V(\Gamma_2),
\]
where two distinct vertices $(g_1,h_1)$ and $(g_2,h_2)$ are adjacent in
$\Gamma_1 \otimes \Gamma_2$ if and only if at least one of the following holds:
\begin{enumerate}
  \item $g_1=g_2$ and $h_1 \sim h_2$ in $\Gamma_2$;
  \item $h_1=h_2$ and $g_1 \sim g_2$ in $\Gamma_1$;
  \item  $g_1 \sim g_2$ in $\Gamma_1$ and $h_1 \sim h_2$ in $\Gamma_2$.
\end{enumerate}


Recall that the centralizer of $x$ in $G,$ denoted by $C(x),$ is the subgroup of $G$ 
consisting of all elements that commute with $x;$ i.e., $C(x)=\{y\in G: xy=yx\}.$ 
By $\cent(G)$, we denote the set of all distinct centralizers of single elements of $G$. 
Researchers have studied the classification of finite groups by counting their centralizers;
see, for example, \cite{abdollahi2007groups,amiri2017groups,belcastro1994countingcentralizeringroups}.

For a finite group $G$, a covering of $G$ is a set $\Gamma= \{H_1, \ldots, H_n \} $ of proper subgroups $H_i$ of $G$ such that $G= \cup_{i=1}^{n} H_i$. 
A covering $\Gamma$ of a group $G$ is called centralizer covering if its members are 
centralizers of some elements in $G$. Following the notation of \cite{haji2019groups}, 
let $\sigma_c(G)$  denote the minimum possible cardinality of a centralizer covering of $G$. Clearly, $\sigma_c(G)= \gamma(\mathcal{C}^{**}(G)).$

\subsection{Main results}

Haji and Amiri in \cite[Theorem 4.1]{haji2019groups} gave an upper bound for $\gamma(\mathcal{C}^{**}(G))$ of every non-abelian group $G$.

\begin{theorem}
\label{thm:haji}
For any non-abelian group $G$, we have
\[
\gamma(\mathcal{C}^{**}(G)) \leq \displaystyle \frac{|G|-|Z(G)|+t}{2} 
\]
where $t$ is the number of centralizers of order $2$ in $G$. 
Moreover, equality holds if and only if $G$ is isomorphic to one of $S_3, D_8$ or $Q_8$.
\end{theorem}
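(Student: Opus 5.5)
We write $n=|G|-|Z(G)|$ for the number of vertices of $\mathcal{C}^{**}(G)$. We read "a centralizer of order $2$" as a centralizer $C(x)$ with $|C(x)|=2$. Such an $x$ forces $Z(G)=\{e\}$ and $C(x)=\{e,x\}$, so $x$ is an \emph{isolated vertex} of $\mathcal{C}^{**}(G)$. Conversely, an isolated vertex $x$ has $C(x)\setminus Z(G)=\{x\}$. Since $C(x)\supseteq Z(G)\cup xZ(G)$, this forces $|Z(G)|=1$ and $C(x)=\{e,x\}$. Hence $t$ is exactly the number of isolated vertices.

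The upper bound follows from Ore's theorem. Every isolated vertex must lie in any dominating set. Let $\Gamma'$ be the subgraph induced on the remaining $n-t$ vertices. By the previous paragraph, $\Gamma'$ has no isolated vertices. Ore's theorem says a graph without isolated vertices on $m$ vertices has domination number at most $m/2$. Therefore
\[
\gamma(\mathcal{C}^{**}(G))=t+\gamma(\Gamma')\le t+\frac{n-t}{2}=\frac{|G|-|Z(G)|+t}{2}.
\]

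For the equality case, equality forces $\gamma(\Gamma')=(n-t)/2$. The domination number is additive over connected components, so every component of $\Gamma'$ must attain Ore's bound. By the Payan–Xuong / Fink–Jacobson–Kinch–Roberts characterization, each such component is either a $4$-cycle $C_4$ or a corona $H\circ K_1$.

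We then convert this into group theory.

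\emph{$C_4$ components.} Consider adjacent $x\sim y$ in such a component. The set $C(x)\cap C(y)\setminus Z(G)$ contains $x$, $y$, $x^{-1}$ and $xz$ for $z\in Z(G)$. These generate triangles unless the relevant elements coincide, and $C_4$ is triangle-free. This should force $Z(G)$ to be trivial and elements to be involutions with tiny centralizers, and then $\langle x,y\rangle$ has order $4$ with $x,y,xy$ mutually commuting, which gives a triangle. So I expect $C_4$ components to be impossible.

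\emph{Corona components.} A leaf $x$ of a corona has exactly one neighbour $y$, so $C(x)\setminus Z(G)=\{x,y\}$ and $|C(x)|=|Z(G)|+2$. Since $Z(G)\le C(x)$, $|Z(G)|$ divides $|Z(G)|+2$, so $|Z(G)|\in\{1,2\}$.
\begin{itemize}
\item If $|Z(G)|=2$, then $|C(x)|=4$ and $C(x)=Z(G)\cup\{x,y\}$, with $y=xz$ for the central involution $z$. Because every vertex of a corona is a leaf or adjacent to one, every centralizer of a noncentral element has order $4$. Then $G/Z(G)$ has order $4$, and $G$ is a group of order $8$ with every noncentral centralizer of order $4$, giving $D_8$ or $Q_8$.
\item If $|Z(G)|=1$, then $|C(x)|=3$, so $C(x)=\{e,x,x^{-1}\}$ has order $3$. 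Combining this with the involutions, which are isolated or leaves, should force $|G|=6$ and hence $G\cong S_3$.
\end{itemize}

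Finally, we verify directly that $S_3$, $D_8$ and $Q_8$ attain equality. For $S_3$, $t=3$ and the two $3$-cycles are adjacent, so $\gamma=3+1=4=(6-1+3)/2$. For $D_8$ and $Q_8$, there are three components $K_2$ and $t=0$, so $\gamma=3=(8-2)/2$.

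The main obstacle is the equality classification. In particular, it must be shown that the corona structure forces \emph{all} noncentral centralizers to be small, rather than just those of the leaves. To do this I would first use that the non-leaf vertices of a corona are each adjacent to exactly one leaf, and exploit that $C(x)$ for a non-leaf $x$ contains the leaf's whole centralizer.
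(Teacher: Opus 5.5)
The paper does not prove this statement; it quotes it from Haji and Amiri \cite{haji2019groups}, so your argument has to stand on its own. The inequality does: the elements with $|C(x)|=2$ are exactly the isolated vertices of $\mathcal{C}^{**}(G)$, they lie in every dominating set, the rest has no isolated vertices, and Ore's bound finishes. The reduction of equality to ``every nontrivial component is $C_4$ or a corona'' (Payan--Xuong, Fink--Jacobson--Kinch--Roberts) is also correct, and so are your checks of $S_3$, $D_8$ and $Q_8$.

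\textbf{The gap is in turning that structure into group theory.} Several steps are not actually proved:
\begin{itemize}
\item The $C_4$ case and the $|Z(G)|=1$ case stop at ``I expect'' and ``should force''.
\item The claim $|G/Z(G)|=4$ is asserted without proof.
\item The remark about involutions being ``isolated or leaves'' does not help. When $Z(G)=1$ a leaf has $|C(x)|=3$, so it has order $3$ and an involution is never a leaf. What must be excluded is involutions as \emph{non-leaf} corona vertices.
\item Your proposed remedy for the main obstacle cannot work. The inclusion $C(y)\supseteq C(x)$, for $y$ the neighbour of a leaf $x$, bounds $|C(y)|$ from below, not from above.
\end{itemize}

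\textbf{The missing idea is that every corona component is a single edge.} Let $x$ be a leaf with neighbour $y$. Then $C(x)=Z(G)\cup\{x,y\}$ has index $1+2/|Z(G)|\in\{2,3\}$ over $Z(G)$. This index is prime, so $C(x)/Z(G)$ is generated by $yZ(G)$ as well as by $xZ(G)$. Hence $x\in\langle y\rangle Z(G)$, so $C(y)=C(x)$ and $y$ is itself a leaf. Concretely $y=xz$ or $y=x^{-1}$, both of which you already found.

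The same count excludes $C_4$ cleanly. A vertex $a$ of a $C_4$ has $|C(a)|=|Z(G)|+3$, so either $|Z(G)|=1$ and $|C(a)|=4$, or $|Z(G)|=3$ and $C(a)=Z(G)\cup aZ(G)$. In both cases $C(a)$ is abelian, so the two non-adjacent neighbours of $a$ commute, a contradiction.

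Thus at equality every noncentral $g$ has $|C(g)|\le |Z(G)|+2$, with $|Z(G)|\in\{1,2\}$. Finish as follows.
\begin{itemize}
\item If $|Z(G)|=2$, there are no isolated vertices and every noncentral centralizer has order $4$. The class equation $|G|=2+k|G|/4$ then forces $|G|=8$, so $G\cong D_8$ or $Q_8$.
\item If $|Z(G)|=1$, every nonidentity centralizer has order $2$ or $3$. A nontrivial Sylow $p$-subgroup $P$ contains a nonidentity element of $Z(P)$ whose centralizer contains $P$, so $|P|\le 3$ for every $p$. Hence $|G|$ divides $6$ and $G\cong S_3$. This also covers the degenerate case with no edges.
\end{itemize}
With these additions your proof is complete.
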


In the following result, we give lower and upper
bounds for the domination number of the proper commuting graph of any finite non-abelian group.

\begin{theorem}\label{MainThm-1(UpperBndAndLwrBnd)}
Let $G$ be a finite non-abelian group. Then
\[
  \frac{|G|-|Z(G)|}{M} 
\leq \gamma\bigl(\mathcal{C}^{**}(G)\bigr)
\leq  T-U 
\]
where
\[
M = \max_{x \in G \setminus Z(G)} \left(|C(x)|\right) - |Z(G)|
\]
and $T$ denotes the number of maximal cyclic subgroups of $G$ and $U$ denotes the number of maximal cyclic subgroups of $G$, completely contained in $Z(G)$.
Moreover, if $p$ is the least prime divisor of $|G|$, then 
$\gamma(\mathcal{C}^{**}(G)) \geq p+1$. 
\end{theorem}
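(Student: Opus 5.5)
Lower bound first. Let $X$ be a minimum dominating set of $\mathcal{C}^{**}(G)$. Every noncentral $y$ either lies in $X$ or commutes with some $x\in X$, so
\[
G\setminus Z(G)=\bigcup_{x\in X}\bigl(C(x)\setminus Z(G)\bigr),
\]
because $x\in C(x)$. Each piece has size $|C(x)|-|Z(G)|\le M$, so $|G|-|Z(G)|\le |X|\,M$. This gives the first inequality.

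Upper bound. Every element $g\in G$ lies in some maximal cyclic subgroup, since $G$ is finite. Let $\langle a_1\rangle,\dots,\langle a_T\rangle$ be the maximal cyclic subgroups, numbered so that $\langle a_i\rangle\subseteq Z(G)$ exactly for $i>T-U$. For $i\le T-U$ the generator $a_i$ is noncentral: if $a_i$ were central, all of $\langle a_i\rangle$ would be central. Set $X=\{a_1,\dots,a_{T-U}\}$. A noncentral $y$ lies in some $\langle a_i\rangle$, and this subgroup is not contained in $Z(G)$ because it contains $y$, so $i\le T-U$. Then $y$ is a power of $a_i$, so either $y=a_i\in X$ or $y$ is adjacent to $a_i$. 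Hence $X$ dominates and $\gamma\le T-U$.

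Bound $p+1$. Take a minimum dominating set $X=\{x_1,\dots,x_n\}$. Then $G=\bigcup_i C(x_i)$, using that $Z(G)\subseteq C(x_i)$, and each $C(x_i)$ is a proper subgroup since $x_i\notin Z(G)$. So $n\ge\sigma(G)$, the covering number of $G$. I would invoke the classical fact that a group covered by $n$ proper subgroups has, by Neumann's lemma, a subgroup of index at most $n$ in an irredundant cover. More precisely, I would use the standard result (Cohn) that $\sigma(G)\ge p+1$ where $p$ is the least prime divisor of $|G|$.

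For a self-contained argument: choose an irredundant subcover $H_1,\dots,H_n$ and order it so that $|H_1|\ge\cdots\ge|H_n|$. Since $H_1\ne G$, its index is at least $p$, so $|H_i|\le |G|/p$ for all $i$. The identity lies in every $H_i$, so
\[
|G|\le \sum_i |H_i|-(n-1)\le n|G|/p-(n-1).
\]
If $n\le p$, this gives $|G|\le |G|-(p-1)$, a contradiction, because $p\ge 2$. Hence $n\ge p+1$.

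The main obstacle is this last step, and the counting argument above handles it directly. The key point to check is that every cover member has index $\ge p$: any proper subgroup has index dividing $|G|$ and greater than $1$, hence at least $p$.
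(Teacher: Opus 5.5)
Your proof is correct. The two displayed inequalities are argued exactly as in the paper. For the lower bound you use closed neighbourhoods $C(x)\setminus Z(G)$ of size at most $M$. For the upper bound you use the generators of the maximal cyclic subgroups not contained in $Z(G)$, and you also justify that these generators are non-central, which the paper simply calls clear.

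For $\gamma(\mathcal{C}^{**}(G))\ge p+1$ you take a different route.

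The paper gets it straight from the first inequality. It chooses $x$ with $M=|C(x)|-|Z(G)|$ and notes that $\frac{|G|-|Z(G)|}{|C(x)|-|Z(G)|}>\frac{|G|}{|C(x)|}\ge p$. The inequality is strict because $1\le|Z(G)|<|C(x)|<|G|$, and the paper then concludes by integrality.

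You instead pass to the cover $G=\bigcup_i C(x_i)$ by proper subgroups. You then prove Cohn's bound $\sigma(G)\ge p+1$ by counting the overlap at the identity.

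Both arguments rest on the same counting idea, with different common overlaps: $Z(G)$ in the paper, $\{e\}$ in yours. The paper's version is shorter once the first inequality is available. Since it exploits the whole centre, its intermediate bound is never weaker than yours. Your version is independent of centralizers and proves something more general: any cover of a finite group by proper subgroups has at least $p+1$ members.

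Small remarks:
\begin{itemize}
\item Irredundance and the ordering of the $H_i$ play no role, since each $H_i$ is proper and so has index at least $p$.
\item Passing from $n\le p$ to $|G|\le|G|-(p-1)$ uses that $1+n(|G|/p-1)$ is nondecreasing in $n$. Alternatively, derive $|G|\le|G|-(n-1)$ and use $n\ge 2$.
\item The appeals to Neumann's lemma and to Cohn can be dropped, because your counting argument is self-contained.
\end{itemize}
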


The inequalities in Theorem \ref{MainThm-1(UpperBndAndLwrBnd)} are tight in the sense that there exists groups (namely Heisenberg group of order $p^3$) for which both the inequalities are equalities. 
For groups which are not isomorphic to the generalized dihedral group, we also show the following upper bound which is similar to the one in Theorem \ref{thm:haji}. 

\begin{theorem}
\label{prop:haji-improved}
Let $G$ be any finite group not isomorphic to generalized dihedral group. Then, 
\[
\gamma(\mathcal{C}^{**}(G)) \leq  (|G|-|Z(G)|) \cdot \displaystyle \min \left\{  \frac{(1+\ln (d+1))}{d+1}, \frac{1}{2} \right\}
\] 
where $d= \min{|C(g)|-|Z(G)|}$. 
\end{theorem}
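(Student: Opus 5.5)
The plan is to view $\mathcal{C}^{**}(G)$ as a graph on $n=|G|-|Z(G)|$ vertices and combine two classical upper bounds on the domination number: Ore's theorem, which gives $\gamma\le n/2$ for graphs without isolated vertices, and the Arnautov--Payan (Alon--Spencer) probabilistic bound $\gamma\le n\frac{1+\ln(\delta+1)}{\delta+1}$, where $\delta$ is the minimum degree. Since each bound holds separately, $\gamma(\mathcal{C}^{**}(G))$ is at most their minimum, which is the stated inequality.

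\emph{Step 1 (the degrees).} For a noncentral $x$, its closed neighbourhood in $\mathcal{C}^{**}(G)$ is exactly $C(x)\setminus Z(G)$, because $Z(G)\subseteq C(x)$. So the minimum closed-neighbourhood size is $\min_x(|C(x)|-|Z(G)|)$. For the logarithmic bound I need $\delta+1\ge d+1$, i.e.\ $\delta\ge d$, because $t\mapsto (1+\ln t)/t$ is decreasing for $t\ge1$. Here I must be careful. Reading $d$ literally as the minimum of $|C(g)|-|Z(G)|$ only gives $\delta=d-1$, and then the argument yields the factor $(1+\ln d)/d$. I would therefore check the convention the authors intend: either $d$ is meant as the minimum degree (the minimum over $g$ of $|C(g)|-|Z(G)|-1$), or the bound is to be stated with $d$ in place of $d+1$. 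With the right convention the Arnautov--Payan inequality applies verbatim. For completeness I would also recall its proof: choose each vertex independently with probability $p$, add all undominated vertices, bound the expectation by $np+n(1-p)^{\delta+1}\le np+ne^{-p(\delta+1)}$, and optimise at $p=\ln(\delta+1)/(\delta+1)$.

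\emph{Step 2 (no isolated vertices; the main obstacle).} For Ore's bound I must show that $\mathcal{C}^{**}(G)$ has no isolated vertex when $G$ is not generalized dihedral. Suppose $x$ is isolated, so $C(x)\setminus Z(G)=\{x\}$. Then $\langle x\rangle Z(G)\subseteq Z(G)\cup\{x\}$, so $Z(G)\cup\{x\}$ is a subgroup properly containing $Z(G)$. This forces $|Z(G)|$ to divide $|Z(G)|+1$, hence $Z(G)=1$ and $C(x)=\{e,x\}$; thus $x$ is an involution with $C(x)=\langle x\rangle$.

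The key group-theoretic fact I then need is the classical one: if an involution $t$ satisfies $C(t)=\langle t\rangle$, then $G=A\rtimes\langle t\rangle$, where $A$ is abelian of odd order and $t$ inverts $A$, i.e.\ $G$ is generalized dihedral. The route I would try is the standard one.
\begin{enumerate}
\item The class $t^G$ has $|G|/2$ elements.
\item The map $g\mapsto g^{-1}t^{-1}gt$ is injective on cosets of $C(t)$, so the elements $t^gt$ (equivalently, the elements inverted by $t$) number $|G|/2$.
\item Counting shows $G=t^G\sqcup A$, where $A=\{y: y^t=y^{-1}\}$.
\item Every element of $A$ is inverted by $t$, and since $A$ is closed under the relevant products one shows that $A$ is an abelian subgroup of index $2$. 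Its order is odd because $C(t)=\langle t\rangle$ excludes involutions in $A$ commuting with $t$.
\end{enumerate}
This yields the contradiction, so no isolated vertex exists and Ore's theorem gives $\gamma\le n/2$.

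Finally I take the minimum of the two bounds. I expect the delicate points to be the identification of $A$ as a subgroup in Step 2, and the off-by-one in the definition of $d$ in Step 1.
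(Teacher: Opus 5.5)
Your strategy is the paper's. The paper also combines the Alon--Spencer bound with an $n/2$ bound, taking the latter from Theorem~\ref{thm:haji} once Theorem~\ref{CentralizerIs2-GeneralizedDihedralGrp} rules out centralizers of order $2$; in that case it is just Ore's bound. Two points are not settled in your write-up.

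\textbf{The off-by-one.} You are right that $d=\min_g(|C(g)|-|Z(G)|)$ is the minimum closed-neighbourhood size $\delta+1$. So Alon--Spencer only gives the factor $(1+\ln d)/d$, and the paper's proof makes exactly this slip when it quotes Alon--Spencer with $d+1$. You need not change the statement, however, because it is true as written.
\begin{itemize}
\item If $2\le d\le 4$, then $(1+\ln(d+1))/(d+1)\ge(1+\ln 5)/5>1/2$. The minimum is therefore $1/2$, and Ore suffices.
\item If $d\ge 5$, replace the $e^{-x}$-relaxed probabilistic estimate (which yields only $(1+\ln d)/d$) by the harmonic form of the Arnautov--Payan theorem, $\gamma\le\frac{n}{\delta+1}\sum_{j=1}^{\delta+1}\frac1j=\frac{n}{d}\sum_{j=1}^{d}\frac1j$. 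Then check the elementary inequality $\frac1d\sum_{j=1}^{d}\frac1j\le\frac{1+\ln(d+1)}{d+1}$ for $d\ge 4$. The quantity $(d+1)\sum_{j\le d}\frac1j-d(1+\ln(d+1))$ is negative at $d=4$. Its increments $\sum_{j\le d+1}\frac1j-\ln(d+2)-d\ln\frac{d+2}{d+1}$ are negative for $d\ge4$: the first part is below $0.58$, and the second is at least $0.72$.
\end{itemize}

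\textbf{The involution lemma.} The paper proves this separately as Theorem~\ref{CentralizerIs2-GeneralizedDihedralGrp}: a Sylow $2$-subgroup has order $2$, there is a normal subgroup of index $2$, and a fixed-point-free involutory automorphism is inversion. You may simply cite it.

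Your counting route is a reasonable alternative, but as written it is wrong. The element $t$ itself satisfies $t^t=t^{-1}$, so $\{y:y^t=y^{-1}\}$ has $|G|/2+1$ elements and meets $t^G$ in $t$; hence $G=t^G\sqcup A$ fails. Take $A=t^Gt$ instead. Then $t^G\cap t^Gt=\emptyset$: if $s't\in t^G$, then $s't$ is an involution, which forces $s'\in C(t)$, so $s'=t$ and $s't=e$.

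The step you leave vague, closure of $A$, needs one more idea. Every $s\in t^G$ also has $C(s)=\langle s\rangle$, so the same count gives $A=G\setminus t^G=t^Gs$ for every $s\in t^G$. Since $t^G$ is inverse-closed, $A=A^{-1}=t\,t^G$. Hence for $a=s_1t$ and $b=ts_2$ in $A$ we get $ab=s_1s_2\in t^Gs_2=A$.

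So $A$ is a subgroup of index $2$ inverted by $t$, hence abelian. It has odd order, because an involution of $A$ would commute with $t$ and so lie in $\{e,t\}$, while $t\notin A$.
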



A common feature of many graphs, arising from groups is the presence of dominating vertices, which often correspond to distinguished elements of the group. To better understand the intrinsic structure of these graphs beyond such trivial influences, researchers have considered the induced subgraphs obtained by removing the dominating vertices. These reduced graphs frequently reveal more subtle combinatorial and algebraic properties. 

Motivated by this perspective, for a graph $\Gamma$ with a dominating vertex $v$, we define the induced subgraph $\Gamma \setminus \Dom(\Gamma)$ as the {\sl proper graph} of $\Gamma$. In this paper, we initiate a systematic study of proper graphs and investigate how they behave under graph operations, with particular emphasis on the strong product. Our aim is to understand how the removal of dominating vertices interacts with product structures and to identify structural properties preserved under this operation. Towards this, we prove the following.

\begin{theorem}
\label{thm:dom-strong-t}
Let $\Gamma_i$ be non-complete graphs for $1 \leq i \leq t$. 
If $\Dom(\Gamma_i)\neq\emptyset$ for all $i=1,\dots,t$. Then
\[
\gamma\!\left(
\Gamma_1\otimes\cdots\otimes\Gamma_t
\setminus
\bigl(
\Dom(\Gamma_1)\times\cdots\times\Dom(\Gamma_t)
\bigr)
\right)
=
\min_{1\le i\le t}
\gamma(\Gamma_i\setminus \Dom(\Gamma_i)).
\]
\end{theorem}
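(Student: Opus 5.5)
The plan is to use two elementary facts about the strong product and then prove the two inequalities separately. Write $\Gamma=\Gamma_1\otimes\cdots\otimes\Gamma_t$, $D_i=\Dom(\Gamma_i)$, $D=D_1\times\cdots\times D_t$, and $V=V(\Gamma)\setminus D$.

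The first fact is that closed neighbourhoods multiply:
\[
N_\Gamma[(g_1,\dots,g_t)]=N_{\Gamma_1}[g_1]\times\cdots\times N_{\Gamma_t}[g_t].
\]
This follows directly from the definition, since two distinct tuples are adjacent exactly when, in every coordinate, the entries are equal or adjacent. Consequently $\Dom(\Gamma)=D$, and domination inside $V$ can be checked coordinatewise. The second fact is that $\Gamma_i\setminus D_i\neq\emptyset$, because $\Gamma_i$ is non-complete. Hence $V\neq\emptyset$ and every $\gamma(\Gamma_i\setminus D_i)$ is defined.

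For the upper bound, fix an index $i$ attaining the minimum and a minimum dominating set $S_i$ of $\Gamma_i\setminus D_i$. For each $j\neq i$ choose $d_j\in D_j$. Let $S$ be the set of tuples whose $i$-th coordinate is some $s\in S_i$ and whose $j$-th coordinate is $d_j$ for $j\ne i$. Each such tuple lies in $V$ because $s\notin D_i$, and $|S|=|S_i|$. To see that $S$ dominates, take $x\in V$.
\begin{itemize}
\item If $x_i\notin D_i$, then $x_i\in N[s]$ for some $s\in S_i$.
\item If $x_i\in D_i$, then $x_i$ is adjacent to every vertex, so again $x_i\in N[s]$.
\end{itemize}
For $j\neq i$ we have $x_j\in N[d_j]=V(\Gamma_j)$. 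By the product formula, $x\in N[(\dots,s,\dots)]$, which gives the upper bound.

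For the lower bound, let $S\subseteq V$ be a dominating set of $\Gamma\setminus D$. For each $i$ set
\[
S_i'=\{s_i : s\in S,\ s_i\notin D_i\}\subseteq V(\Gamma_i)\setminus D_i,
\]
so that $|S_i'|\le |S|$. It suffices to show that some $S_i'$ dominates $\Gamma_i\setminus D_i$. Suppose instead that for every $i$ there is $y_i\in V(\Gamma_i)\setminus D_i$ with $y_i\notin N[S_i']$; if $S_i'$ is empty, any $y_i$ works. Then $y=(y_1,\dots,y_t)\in V$, so some $s\in S$ satisfies $y\in N[s]$, which means $s_i\in N[y_i]$ for all $i$. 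Since $s\notin D$, some coordinate has $s_j\notin D_j$. Then $s_j\in S_j'$ and $y_j\in N[s_j]$, contradicting the choice of $y_j$. Hence $\min_i\gamma(\Gamma_i\setminus D_i)\le|S|$, and equality follows.

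The only delicate step is this lower bound. The naive approach of projecting $S$ onto one coordinate fails, because projections may land in $D_i$. The fix is to discard the dominating coordinates and build the counterexample vertex $y$ simultaneously in all coordinates. The hypothesis $D_i\neq\emptyset$ is used only in the upper-bound construction, through the padding vertices $d_j$.
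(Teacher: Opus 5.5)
Your proof is correct. It has the same skeleton as the paper's, but your lower bound differs at the one step where the paper's argument does not go through.

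The upper bound is the paper's construction verbatim: take a minimum dominating set of $\Gamma_i\setminus\Dom(\Gamma_i)$ in one coordinate and pad the other coordinates with universal vertices $u_j$.

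The lower bound has the same shape in both proofs: project a small dominating set onto the coordinates and build a vertex that it misses. The paper, however, projects a putative dominating set $D$ with $|D|<k$ in full, $\pi_i(D)$. It then asserts that since $|\pi_i(D)|<\gamma(\Gamma_i\setminus\Dom(\Gamma_i))$, there is some $x_i^0\in V(\Gamma_i)\setminus\Dom(\Gamma_i)$ that is neither equal nor adjacent to any vertex of $\pi_i(D)$. This fails whenever $\pi_i(D)$ meets $\Dom(\Gamma_i)$. That is permitted, since only tuples with every coordinate universal are deleted. It also actually occurs for the optimal sets from the upper bound, where $\pi_j(D)=\{u_j\}$ for $j\ge 2$.

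Your sets $S_i'$ keep only the non-universal coordinates of each tuple. Combined with the observation that every $s\in S$ has at least one coordinate $s_j\notin\Dom(\Gamma_j)$, this is exactly what is needed. The witness $y$ then escapes $s$ in that single coordinate. You never need the paper's stronger claim that $y$ escapes $s$ in every coordinate, which is false at universal coordinates.

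So your argument is the rigorous version of the paper's proof, and your closing remark correctly pinpoints where the naive projection breaks.
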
 

Using Theorem \ref{thm:dom-strong-t}, we get an alternate proof of \cite[Corollary 3.6]{haji2019groups} which tells regarding the connection between the proper
commuting graph of a finite nilpotent group and the domination number of proper commuting graph of its Sylow subgroups. 
Subsequently, we turn our attention to the total domination number of the proper commuting graph of a finite group. Regarding the total domination number, we have the following theorem, which is proved in Section \ref{sec:total-dom-num}. 

\begin{theorem}\label{Thm:TotalDomOfProperCommutinGraph}
Let $G$ be a finite non-abelian group. Then the proper commuting graph $\mathcal{C}^{**}(G)$ admits a total dominating set if and only if $G$ is not isomorphic to a generalized dihedral group of order $2m,$ where $m$ is odd. 
Moreover, in this case,
\[
\gamma_t\bigl(\mathcal{C}^{**}(G)\bigr) \leq 2( T-U) 
\]
where $T,U$ are defined as in Theorem \ref{MainThm-1(UpperBndAndLwrBnd)}. 
\end{theorem}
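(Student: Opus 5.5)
The plan is to use the standard fact that a graph admits a total dominating set if and only if it has no isolated vertices. The first part of Theorem \ref{Thm:TotalDomOfProperCommutinGraph} then reduces to classifying the finite non-abelian groups $G$ for which $\mathcal{C}^{**}(G)$ has an isolated vertex. For the upper bound I would take a dominating set of size $T-U$ as in Theorem \ref{MainThm-1(UpperBndAndLwrBnd)} and double it.

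\textbf{Isolated vertices.} Suppose $x\in G\setminus Z(G)$ is isolated. Then $C(x)\setminus Z(G)=\{x\}$, so $C(x)=Z(G)\cup\{x\}$. Since $C(x)$ is a subgroup containing $Z(G)$ and $x$, it contains the coset $xZ(G)$, which lies outside $Z(G)$. This forces $xZ(G)=\{x\}$, hence $Z(G)=\{e\}$. Also $x^2\in C(x)$, and $x^2=x$ is impossible, so $x^2\in Z(G)=\{e\}$. Therefore $x$ is an involution with $|C(x)|=2$.

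Next I would pin down the structure of $G$.
\begin{enumerate}
\item Let $P$ be a Sylow $2$-subgroup containing $x$. Then $Z(P)\le C(x)=\langle x\rangle$, so $x\in Z(P)$, and hence $P\le C(x)$. Thus $|P|=2$.
\item A cyclic Sylow $2$-subgroup gives, by Burnside's transfer theorem, a normal $2$-complement $N$ of odd order $m$, and $G=N\rtimes\langle x\rangle$.
\item Since $C_N(x)=1$, the involution $x$ acts fixed-point-freely on $N$. The classical map $n\mapsto n^{-1}n^{x}$ is then a bijection, which shows that $x$ inverts every element of $N$.
\item Inversion being an automorphism forces $N$ to be abelian. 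Hence $G$ is generalized dihedral of order $2m$ with $m$ odd, and $m>1$ because $G$ is non-abelian.
\end{enumerate}

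\textbf{Converse.} In $\mathrm{Dih}(N)$ with $|N|=m$ odd, the center is trivial, because no nontrivial $n$ satisfies $n=n^{-1}$. For $x=y\,t$ with $t$ the inverting involution, $C(x)=\{e,x\}$: an element $n$ of $N$ commuting with $x$ satisfies $n^2=e$, so $n=e$. Such an $x$ is therefore an isolated vertex. The main obstacle is the structural step (items 1--4), specifically the fixed-point-free action argument; the rest is bookkeeping.

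\textbf{Upper bound.} Let $H_1,\dots,H_{T-U}$ be the maximal cyclic subgroups not contained in $Z(G)$, and let $g_i$ be a generator of $H_i$; each $g_i$ is noncentral. Every noncentral $y$ lies in some maximal cyclic subgroup, which cannot lie inside $Z(G)$, say $H_i$. Then $y$ commutes with $g_i$, so $\{g_i\}$ dominates $\mathcal{C}^{**}(G)$. When $G$ is not such a generalized dihedral group there are no isolated vertices, so for each $i$ we may choose a neighbour $h_i$ of $g_i$ in $\mathcal{C}^{**}(G)$.

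Set $S=\{g_i,h_i\}$, so $|S|\le 2(T-U)$. Each $g_i$ is adjacent to $h_i\in S$. Any other vertex $y$ lies in some $H_j$: if $y\ne g_j$ it is adjacent to $g_j$, and if $y=g_j$ it is adjacent to $h_j$. Hence $S$ is a total dominating set, giving $\gamma_t\bigl(\mathcal{C}^{**}(G)\bigr)\le 2(T-U)$.
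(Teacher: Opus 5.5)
Your argument is correct and follows the same route as the paper. A total dominating set exists iff $\mathcal{C}^{**}(G)$ has no isolated vertex, i.e.\ iff no noncentral $x$ has $|C(x)|=2$, and such an $x$ forces $G$ to be generalized dihedral of order $2m$ with $m$ odd.

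Your write-up is more complete than the paper's in several places:
\begin{itemize}
\item You justify the index-$2$ normal subgroup (Burnside) and the inversion step (via the bijection $n\mapsto n^{-1}n^{x}$). The paper only asserts both.
\item You prove the converse explicitly. One small step is unstated there: $C(x)\cap N=1$ gives $|C(x)|\le 2$ because $[C(x):C(x)\cap N]\le 2$.
\item You prove the bound $\gamma_t\le 2(T-U)$ by doubling the maximal-cyclic dominating set. The paper's proof never actually argues this bound.
\end{itemize}
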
 

In the next result, we compare the total domination number of the proper graph of the strong product of graphs with the domination number of the proper graph of the individual graphs. 

\begin{theorem}
\label{thm:tot-dom-strong-t}
Let $\Gamma_1,\dots,\Gamma_t$ ($t \geq 2)$ be finite simple non-complete graphs such that
$\Dom(\Gamma_i)\neq\emptyset$ for all $i=1,\dots,t$.
Then
\[
\gamma_t\!\left(
\Gamma_1\otimes\cdots\otimes\Gamma_t
\setminus
\bigl(
\Dom(\Gamma_1)\times\cdots\times\Dom(\Gamma_t)
\bigr)
\right)
\le
\min_{1\le i\le t}
\gamma(\Gamma_i\setminus \Dom(\Gamma_i))+1.
\]
Moreover, if $\gamma_t(\Gamma_i \setminus \Dom(\Gamma_i)) > \gamma(\Gamma_i \setminus \Dom(\Gamma_i)) $ for each $i$, then 
\[
\gamma_t\!\left(
\Gamma_1\otimes\cdots\otimes\Gamma_t
\setminus
\bigl(
\Dom(\Gamma_1)\times\cdots\times\Dom(\Gamma_t)
\bigr)
\right)
=
\min_{1\le i\le t}
\gamma(\Gamma_i\setminus \Dom(\Gamma_i))+1.
\]
\end{theorem}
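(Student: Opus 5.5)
Throughout, set $H=\Gamma_1\otimes\cdots\otimes\Gamma_t\setminus(\Dom(\Gamma_1)\times\cdots\times\Dom(\Gamma_t))$, $\Gamma_i'=\Gamma_i\setminus\Dom(\Gamma_i)$ and $\gamma_i=\gamma(\Gamma_i')$. Let $m=\min_i\gamma_i$, and relabel so that $m=\gamma_1$. Fix $d_j\in\Dom(\Gamma_j)$ for every $j$. Since $\Gamma_j$ is non-complete, $\Gamma_j'\neq\emptyset$, so we may also fix $w_j\in\Gamma_j'$. The plan is to prove the upper bound by an explicit construction, and then, under the extra hypothesis, to prove the matching lower bound by projecting a total dominating set onto the factors.

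\emph{Upper bound.} Let $S$ be a minimum dominating set of $\Gamma_1'$. Put
\[
T=\{(s,d_2,\dots,d_t): s\in S\}\cup\{(d_1,w_2,\dots,w_t)\}.
\]
Then $T\subseteq V(H)$, since each element has a coordinate outside the $\Dom$ sets, and $|T|\le m+1$. We show every $v=(v_1,\dots,v_t)\in V(H)$ has a neighbour in $T$, splitting into three cases.
\begin{enumerate}
\item If $v_1\in\Dom(\Gamma_1)$, then $v_1$ is adjacent to any $s\in S$, because $s$ is non-dominating and hence $s\neq v_1$. Each $v_j$ ($j\ge2$) equals $d_j$ or is adjacent to it. So $v\sim(s,d_2,\dots,d_t)$.
\item If $v_1\in\Gamma_1'$ and $v\neq(s,d_2,\dots,d_t)$ for all $s\in S$, then $v_1$ equals or is adjacent to some $s\in S$. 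Since $v$ differs from $(s,d_2,\dots,d_t)$, the same coordinatewise argument gives adjacency.
\item If $v=(s,d_2,\dots,d_t)$ with $s\in S$, then $v\sim(d_1,w_2,\dots,w_t)$, because $d_1\sim s$ and $d_j\sim w_j$ for all $j\ge2$.
\end{enumerate}
This shows $\gamma_t(H)\le m+1$.

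\emph{Lower bound $\gamma_t(H)\ge m$ (no extra hypothesis needed).} Let $T$ be any total dominating set of $H$. For each $i$ put $A_i=\{u\in T:\pi_i(u)\notin\Dom(\Gamma_i)\}$. Since $T\cap\prod_i\Dom(\Gamma_i)=\emptyset$, we have $T=\bigcup_iA_i$.

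Suppose that for every $i$ the set $\pi_i(A_i)$ fails to dominate $\Gamma_i'$. Pick $x_i\in\Gamma_i'$ that neither equals nor is adjacent to any element of $\pi_i(A_i)$. Then $x=(x_1,\dots,x_t)\in V(H)$. Any neighbour $u\in T$ of $x$ lies in some $A_i$, and adjacency in the strong product forces $\pi_i(u)$ to equal or be adjacent to $x_i$, a contradiction.

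Hence for some $i$ the set $\pi_i(A_i)$ dominates $\Gamma_i'$, so $|T|\ge|A_i|\ge|\pi_i(A_i)|\ge\gamma_i\ge m$.

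\emph{Strictness under the hypothesis.} This is the step I expect to be the real obstacle. Suppose $|T|=m$. Then for the index $i$ found above, all the inequalities are equalities. So $A_i=T$, $\pi_i$ is injective on $T$, and $\pi_i(T)$ is a minimum dominating set of $\Gamma_i'$. Since $\gamma_t(\Gamma_i')>\gamma_i$, the set $\pi_i(T)$ is not totally dominating. So there is $y\in\Gamma_i'$ with no neighbour in $\pi_i(T)$; as $\pi_i(T)$ dominates, $y=\pi_i(u_0)$ for a unique $u_0\in T$.

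Now $u_0\in V(H)$ must have a neighbour $u\in T$. Because $\pi_i(u)\in\pi_i(T)\subseteq\Gamma_i'$ is not adjacent to $y$, adjacency forces $\pi_i(u)=y$, i.e.\ $u=u_0$ by injectivity. This contradicts $u\neq u_0$. Therefore $\gamma_t(H)\ge m+1$, and combined with the construction we get equality.
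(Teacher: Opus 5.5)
Your proof is correct and follows the same route as the paper: the same $(m+1)$-element construction $\{(s,d_2,\dots,d_t):s\in S\}\cup\{(d_1,w_2,\dots,w_t)\}$ for the upper bound, and a projection argument for both the lower bound and the strictness step. Your restriction to $A_i=\{u\in T:\pi_i(u)\notin\Dom(\Gamma_i)\}$ and your injectivity argument in the case $|T|=m$ make rigorous two steps that the paper only sketches. The paper projects all of $S$, and that image may contain dominating vertices of $\Gamma_j$, so it does not by itself give $|S|\ge\gamma(\Gamma_j\setminus\Dom(\Gamma_j))$.
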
 
Using Theorem \ref{thm:tot-dom-strong-t}, we prove Corollary \ref{Thm:TD-Nilpotent-Commuting} in Section \ref{sec:total-dom-num}.
In Section \ref{sec:applications}, we compute the exact domination number and total domination number of some well-known groups. In Section \ref{sec:spectrum-domination}, we consider the ratio of $\gamma(\mathcal{C}^{**}(G))$ and $|G|$ and prove results regarding its maximum value.

\section{Proof of Theorem \ref{MainThm-1(UpperBndAndLwrBnd)}}\label{Pf-MainThm1Bothbdry}

In this section, we start with the following connection between the domination number of $\mathcal{C}^{**}(G)$ and the maximal cyclic subgroups of $G$.

\begin{proposition}
\label{Thm:DomofCommtingGraphAndMaximalCyclicSubgrp}
Let $G$ be a finite non-abelian group. Then
\[
\gamma(\mathcal{C}^{**}(G)) \leq T - U,
\]
where $T$ denotes the number of maximal cyclic subgroups of $G$ and $U$ denote the number of maximal cyclic subgroups of $G$ contained in $Z(G)$.
\end{proposition}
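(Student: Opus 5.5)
The plan is to build an explicit dominating set of $\mathcal{C}^{**}(G)$ with one vertex for each maximal cyclic subgroup of $G$ that is not contained in $Z(G)$. Let $\langle g_1\rangle,\dots,\langle g_T\rangle$ be the maximal cyclic subgroups of $G$, numbered so that $\langle g_1\rangle,\dots,\langle g_{T-U}\rangle$ are exactly those not contained in $Z(G)$. For $1\le i\le T-U$ the generator $g_i$ is noncentral: if $g_i\in Z(G)$, then $\langle g_i\rangle\subseteq Z(G)$, contrary to the choice. So the set $X=\{g_1,\dots,g_{T-U}\}$ lies in the vertex set $G^{**}$, and $|X|\le T-U$.

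Next I check that $X$ dominates. Take any $x\in G\setminus Z(G)$. Since $G$ is finite, the cyclic subgroup $\langle x\rangle$ lies in some maximal cyclic subgroup $\langle g_j\rangle$; take a cyclic subgroup of largest order containing $\langle x\rangle$. Then $x\in\langle g_j\rangle$ and $x\notin Z(G)$, so $\langle g_j\rangle\not\subseteq Z(G)$, which gives $j\le T-U$ and $g_j\in X$. Because $x$ is a power of $g_j$, the two elements commute. Either $x=g_j\in X$, or $x\neq g_j$ and $x$ is adjacent to $g_j$ in $\mathcal{C}^{**}(G)$. Hence every vertex outside $X$ has a neighbour in $X$, and $\gamma(\mathcal{C}^{**}(G))\le |X|\le T-U$.

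There is no real obstacle here; the only points needing care are these:
\begin{enumerate}
\item every cyclic subgroup lies in a maximal one, which holds by finiteness;
\item the generators of the chosen noncentral maximal cyclic subgroups are themselves noncentral, so they are genuine vertices;
\item distinct maximal cyclic subgroups contribute distinct generators, although only the inequality $|X|\le T-U$ is needed.
\end{enumerate}
Since $G$ is non-abelian, $G^{**}\neq\emptyset$, so $T-U\ge 1$ and the bound is meaningful.

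Equivalently, one can phrase the argument through centralizer coverings: the centralizers $C(g_i)$ for $i\le T-U$ cover $G\setminus Z(G)$, since each maximal cyclic subgroup $\langle g_i\rangle$ is contained in $C(g_i)$. This bounds $\sigma_c(G)=\gamma(\mathcal{C}^{**}(G))$ in the same way.
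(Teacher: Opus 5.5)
Your proof is correct and is essentially the paper's argument. Both take one generator from each maximal cyclic subgroup not contained in $Z(G)$, and both observe that every noncentral element lies in one of these subgroups and so commutes with its generator. You also justify explicitly that these generators are noncentral and that every cyclic subgroup lies in a maximal one; the paper treats these points as clear.
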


\begin{proof}
Let 
\[
M_1 = \langle x_1 \rangle,\, M_2 = \langle x_2 \rangle,\, \dots,\, M_s = \langle x_s \rangle
\]
be all maximal cyclic subgroups of $G$ and without loss of generality, let $M_1, \dots, M_r$ 
be all maximal cyclic subgroups of $G$ which are contained in $Z(G)$. Consider the set 
\[
D = \{x_{r+1}, x_{r+2}, \dots, x_{s}\}.
\]
Clearly, $D \subseteq V(\mathcal{C}^{**}(G))$. We claim that $D$ is a dominating set of the graph $\mathcal{C}^{**}(G)$.
Let $x \in G^{**} \setminus D$. Then the cyclic subgroup $\langle x \rangle$ is contained in one of the $M_i$s. Moreover, $x \notin M_i$ for $i\in [r]$, otherwise $x \in Z(G)$, contradiction.  Hence, $x x_i = x_i x$ for some $i > r$. Therefore, $D$ is a dominating set of $\mathcal{C}^{**}(G)$.
\end{proof} 


\begin{proposition}
\label{Thm:lowe-bound-dom-any-grp}
Let $G$ be any finite group. Then 
\[
\gamma(\mathcal{C}^{**}(G)) \geq \frac{|G| - |Z(G)|}{M} 
\]
where \[
M = \max_{x \in G \setminus Z(G)} \bigl( |C(x)| \bigr) - |Z(G)| .
\]
In particular, if $p$ is the least prime divisor of $|G|$, then 
$\gamma(\mathcal{C}^{**}(G)) \geq p+1$. 
\end{proposition}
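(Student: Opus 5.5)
The plan is a counting argument over closed neighbourhoods, followed by a direct application of that count for the ``in particular'' part. Throughout I assume $G$ is non-abelian; otherwise $\mathcal{C}^{**}(G)$ has no vertices and the statement is vacuous. Note also that $M$ is well defined and positive. For every $x \in G\setminus Z(G)$ we have $Z(G)\cup\{x\}\subseteq C(x)$, so $|C(x)|-|Z(G)|\geq 1$.

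\emph{Step 1: the general lower bound.} For a vertex $x$ of $\mathcal{C}^{**}(G)$, its closed neighbourhood is exactly $N[x]=C(x)\setminus Z(G)$, so $|N[x]| = |C(x)|-|Z(G)| \le M$. Now take a minimum dominating set $D$, so $|D|=\gamma(\mathcal{C}^{**}(G))$. Every vertex of $G^{**}$ lies in $N[x]$ for some $x\in D$, hence
\[
G\setminus Z(G) = \bigcup_{x\in D} N[x].
\]
Taking cardinalities gives $|G|-|Z(G)| \le |D|\cdot M$, which is the claimed bound.

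\emph{Step 2: the bound $p+1$.} For $x\notin Z(G)$, $C(x)$ is a proper subgroup of $G$. Its index is therefore a divisor of $|G|$ greater than $1$, hence at least $p$, so $|C(x)|\le |G|/p$. This gives $M \le |G|/p - |Z(G)|$, which is positive by the remark above. Substituting into Step 1 yields
\[
\gamma(\mathcal{C}^{**}(G)) \ge \frac{|G|-|Z(G)|}{|G|/p-|Z(G)|}.
\]
Multiplying out, the right-hand side is strictly greater than $p$ exactly when $|G|-|Z(G)| > |G| - p|Z(G)|$. This holds because $p>1$ and $|Z(G)|\geq 1$. Since $\gamma$ is an integer strictly larger than $p$, we conclude $\gamma(\mathcal{C}^{**}(G))\ge p+1$.

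The main (mild) obstacle is obtaining strictness in Step 2, since a non-strict inequality would only give $\gamma \ge p$. The strictness comes precisely from the center being nonempty (it contains $e$): this makes the subtracted term $p|Z(G)|$ larger than $|Z(G)|$. Everything else is routine.
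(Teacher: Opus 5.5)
Your proof is correct and takes essentially the same route as the paper: the closed-neighbourhood count $|N[x]|=|C(x)|-|Z(G)|\le M$ gives the general bound, and then $[G:C(x)]\ge p$ together with $|Z(G)|\ge 1$ gives the strict inequality $\frac{|G|-|Z(G)|}{M}>p$. The only cosmetic difference is that you bound $M\le |G|/p-|Z(G)|$ uniformly rather than fixing a maximizing $x$, and your explicit check of strictness (and of positivity of the denominator) fills in a step the paper states tersely.
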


\begin{proof}
Let $\Gamma=\mathcal{C}^{**}(G) $. For $x \in G \setminus Z(G)$, the closed neighborhood in $\Gamma$ is
$
N_\Gamma[x] = C(x) \setminus Z(G),
$
with size
$
|N_\Gamma[x]| = |C(x)| - |Z(G)| \le M.
$
If $D$ is a dominating set in $\Gamma$, then
$
\bigcup_{d \in D} N_\Gamma[d] = G \setminus Z(G),
$
so that
$ \sum_{d \in D} |N_\Gamma[d]| \le |D| \cdot M.
$
Therefore,
\[
|D| \ge \frac{|G| - |Z(G)|}{M}.
\]
This completes the proof of the first part of the theorem. 

 We choose $x\notin Z(G)$ such that
 $
M=|C(x)|-|Z(G)|.
$ Let $k= |G|/|C(x)|$. 
Since $x$ is non-central, $C(x)\neq G$, hence $k>1$.  
By Lagrange's theorem, $k$ divides $|G|$.  
Because $p$ is the least prime divisor of $|G|$, it follows that
$
k\ge p.
$
As, $k=\frac{|G|}{|C(x)|} \geq p$, we must have 
$\frac{|G-Z(G)|}{|C(x)-Z(G)|} > p$, completing the proof. 
\end{proof}

Now, Theorem \ref{MainThm-1(UpperBndAndLwrBnd)} follows from Propositions \ref{Thm:DomofCommtingGraphAndMaximalCyclicSubgrp} and  \ref{Thm:lowe-bound-dom-any-grp}.

\begin{corollary}\label{Lemma:DomCommAnygrp-ge-3}
Let $G$ be any finite non-abelian group. Then 
\[
\gamma(\mathcal{C}^{**}(G)) \ge 3.
\]
\end{corollary}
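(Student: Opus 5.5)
The plan is to read this off Proposition \ref{Thm:lowe-bound-dom-any-grp}. Let $p$ be the least prime divisor of $|G|$. Since $G$ is non-abelian, $Z(G)$ is a proper subgroup, so $|G|>1$ and $p\ge 2$. The proposition then gives $\gamma(\mathcal{C}^{**}(G)) \ge p+1 \ge 3$, and this is the whole argument.

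The proof of that last clause ends with a strict inequality, so I would check it explicitly rather than cite it blindly. Fix $x\notin Z(G)$ with $|C(x)|-|Z(G)|=M$, and let $k=|G|/|C(x)|$, so that $k\ge p$. Then
\[
\frac{|G|-|Z(G)|}{|C(x)|-|Z(G)|} \;>\; \frac{|G|}{|C(x)|} \;=\; k \;\ge\; p .
\]
The first inequality is strict because $0<|Z(G)|$ and $|C(x)|<|G|$. To see this, note that $(a-z)/(b-z)>a/b$ with $a=|G|$, $b=|C(x)|$, $z=|Z(G)|$ is equivalent to $zb<za$ after clearing positive denominators. The denominator $b-z$ is positive because $x\in C(x)\setminus Z(G)$.

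Since $\gamma$ is an integer, $\gamma(\mathcal{C}^{**}(G)) >p$ gives $\gamma(\mathcal{C}^{**}(G)) \ge p+1\ge 3$.

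As an independent sanity check, there is a direct argument avoiding the counting. $\gamma=1$ would require a noncentral $x$ commuting with every noncentral element, hence with all of $G$, which is impossible. $\gamma=2$ would give $G=C(x)\cup C(y)$ with both centralizers proper subgroups, but no group is a union of two proper subgroups.

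The only step needing care is the strictness of the inequality above, and this is routine.
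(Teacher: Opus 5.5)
Your proposal is correct and is how the paper gets the corollary: it is read off from the clause $\gamma(\mathcal{C}^{**}(G))\ge p+1$ of Proposition~\ref{Thm:lowe-bound-dom-any-grp}, using $p\ge 2$. Your explicit check of the strict inequality $\frac{|G|-|Z(G)|}{|C(x)|-|Z(G)|}>\frac{|G|}{|C(x)|}$ is sound, and so is your independent argument that no group is a union of two proper subgroups.
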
 




\section{Proof of Theorem \ref{prop:haji-improved}}

In this section, we establish bounds for the domination number of the proper commuting graph of general non-abelian groups. To this end, we first characterize all finite groups $G$ that admit a non-central element whose centralizer has order $2$. For this purpose, we recall the notion of a generalized dihedral group.

For a finite abelian group $A$, the \emph{generalized dihedral group} $D(A)$ is defined as the semidirect product of $A$ with a cyclic group $\langle x \rangle$ of order $2$, where
\[
x^{-1} a x = a^{-1} \quad \text{for all } a \in A.
\]
(When $A$ is cyclic, this reduces to the usual dihedral group.)
It is straightforward to verify that every element of $D(A) \setminus A$ has order $2$. Moreover, if $x' \in D(A) \setminus A$, then the centralizer of $x'$ in $A$ is precisely the set of elements of $A$ of order $1$ or $2$.

\begin{theorem}
\label{thm:dom-gen-dih}
If $A$ is not elementary abelian, the domination number of $\mathcal{C}^{**}(D(A))$ is 
\[
1+\frac{|A|}{|T|},
\]
where
$
T=\{a\in A:a^2=1\}.
$ 
\end{theorem}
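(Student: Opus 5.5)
The plan is to determine $\mathcal{C}^{**}(D(A))$ completely. I expect it to be a disjoint union of complete graphs, and then the domination number is simply the number of components. Write $G=D(A)=A\rtimes\langle x\rangle$ and $T=\{a\in A: a^2=1\}$. Every element of $G\setminus A$ has the form $xb$ with $b\in A$.

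First I identify $Z(G)$ and check that it equals $T$.
\begin{itemize}
\item For $a\in A$, we have $x^{-1}ax=a^{-1}$. So $a$ commutes with $x$ iff $a^2=1$, and $a$ commutes with all of $A$ automatically. Hence $A\cap Z(G)=T$.
\item For $xb$ and any $c\in A$: $(xb)c=x(bc)$ and $c(xb)=x(c^{-1}b)$. So $xb$ commutes with $c$ iff $c^2=1$.
\item Since $A$ is not elementary abelian, some $c\in A$ has $c^2\neq 1$. Thus no $xb$ is central, and $Z(G)=T$.
\end{itemize}
In particular $A\setminus T\neq\emptyset$, which is exactly where the hypothesis is used.

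Next I compute the centralizers of noncentral elements.
\begin{itemize}
\item For $a\in A\setminus T$: $a$ commutes with all of $A$, and by the computation above it commutes with no $xb$. So $C(a)=A$.
\item For $xb$: the computation above gives $C(xb)\cap A=T$. For the coset part, $(xb)(xc)=b^{-1}c$ and $(xc)(xb)=c^{-1}b$. These agree iff $(b^{-1}c)^2=1$, i.e.\ iff $c\in bT$. Hence $C(xb)=T\cup xbT$.
\end{itemize}

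Removing $Z(G)=T$, the closed neighbourhoods in $\mathcal{C}^{**}(G)$ are therefore:
\begin{itemize}
\item $A\setminus T$ for every vertex $a\in A\setminus T$;
\item the coset $xbT$ for every vertex $xb$.
\end{itemize}
These sets partition $G\setminus T$: the first is one block, and the second type gives $[A:T]=|A|/|T|$ disjoint cosets. Each block is a clique, and there are no edges between different blocks. So $\mathcal{C}^{**}(G)$ is a disjoint union of $1+|A|/|T|$ complete graphs.

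Finally, the domination number of a disjoint union of nonempty complete graphs equals the number of components. One vertex per component dominates the graph. Conversely, any dominating set must contain a vertex of each component, because there are no edges between components. This gives $\gamma(\mathcal{C}^{**}(D(A)))=1+|A|/|T|$.

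The only step needing care is the centralizer computation for the elements $xb$, in particular the identity $(xb)(xc)=b^{-1}c$, which follows from $xbx=b^{-1}$ and $x^2=1$. The rest is bookkeeping.
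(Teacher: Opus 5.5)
Your proof is correct and follows the same route as the paper. Both arguments show that $\mathcal{C}^{**}(D(A))$ is the disjoint union of the clique $A\setminus T$ and the $|A|/|T|$ cliques $xbT$ partitioning $D(A)\setminus A$, and then count components; yours is also slightly more careful, since you explicitly verify $Z(D(A))=T$ and show where the hypothesis on $A$ is used, both of which the paper leaves implicit.
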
 

\begin{proof}

The noncentral elements of $D(A)$ consist of two types of elements:
\[
A\setminus T
\qquad\text{and}\qquad
AT=\{at:a\in A, t \in T\}.
\]
Since \(A\) is abelian, every two elements of \(A\) commute. Hence
$A \setminus T$
induces a complete subgraph.
Therefore this component has domination number $1$.

Take $at,bt' \in AT$. Then
$
(at)(bt')=ab^{-1}tt',
$
while
$
(bt')(at)=ba^{-1}t't.
$

We have
\[
(at)(bt)=(bt)(at)
\iff
ab^{-1}=ba^{-1}
\iff
a^2=b^2
\iff
ab^{-1}\in T.
\]

Thus
$
at$ commutes with $bt'$
if and only if 
$aT=bT.$
Hence $AT$ splits into exactly
$
[A:T]=\frac{|A|}{|T|}
$
pairwise disjoint complete subgraphs, indexed by the cosets of \(T\) in \(A\),
and 
each such clique has domination number $1$. 

Moreover, if $a\in A\setminus T$ and 
$bt\in At$, then
$
a(bt)=abt,
$
while
$
(bt)a=ba^{-1}t.
$
These are equal if and only if
$
a=a^{-1},
$
equivalently,
$
a^2=1.
$ 
But as $a\notin T$, so $a$ and $b$ do not commute.
Hence there are no edges between $A \setminus T$ and $BT$. 

Therefore the proper commuting graph is a disjoint union of
\[
1+\frac{|A|}{|T|}
\]
complete graphs, completing the proof. 
\end{proof}

\begin{theorem}
\label{CentralizerIs2-GeneralizedDihedralGrp}
 Let $G$ be a finite group. Suppose there exists an element $x \in G \setminus Z(G)$ such that $|C(x)| = 2$. Then $G$ is a generalized dihedral group of order $2m,$ where $m$ is odd.   
\end{theorem}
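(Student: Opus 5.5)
Since $C(x)$ contains both $Z(G)$ and $\langle x\rangle$, and $x\notin Z(G)$, the hypothesis $|C(x)|=2$ forces $Z(G)=\{e\}$ and $C(x)=\langle x\rangle=\{e,x\}$, so $x$ is an involution. The plan is to use Frobenius theory for the involution $x$. First I would show that $H=\langle x\rangle$ is a Frobenius complement in $G$, i.e.\ $H\cap H^g=\{e\}$ for every $g\notin H=N_G(H)$. Indeed $N_G(\langle x\rangle)=C(x)$ because $\langle x\rangle$ has order $2$, and two distinct conjugates of $\langle x\rangle$ meet trivially since each has order $2$. Hence $G$ is a Frobenius group with complement $\langle x\rangle$, provided $G\neq \langle x\rangle$. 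This holds because $x$ is non-central.

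Next I would apply Frobenius' theorem to get a normal Frobenius kernel $K$ with $G=K\rtimes\langle x\rangle$, $|K|=m=[G:\langle x\rangle]=|G|/2$, and $\gcd(|K|,2)=1$. So $m$ is odd. To avoid the full character-theoretic Frobenius theorem, I would give the elementary counting argument available for involutions. The class $x^G$ has $|G|/2=m$ elements, all of them involutions. Consider the set $K=\{e\}\cup\{xy : y\in x^G,\ y\ne x\}$. For conjugates $y=x^g$ with $y\neq x$, the element $xy$ is inverted by $x$. If $xy=xy'$ then $y=y'$, so these products are distinct. The standard argument ($x$ inverts $xy$, and if $xy$ had even order, an involution in $\langle xy\rangle$ would commute with $x$ and hence equal $x$, a contradiction) shows that every such $xy$ has odd order. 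Also every involution of $G$ lies in $x^G$: a Sylow $2$-subgroup must have order $2$, since otherwise $x$ would be centralized by some element of a $2$-group containing it, making $|C(x)|>2$. So $|G|=2m$ with $m$ odd, and all involutions are conjugate.

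The key structural step, which I expect to be the main obstacle, is to show that $x$ inverts every element of a normal subgroup $A$ of index $2$, and that $A$ is abelian. Here is the route. The map $g\mapsto g^{-1}g^{x}$ is injective on right cosets of $C(x)$. Hence $I=\{g^{-1}xgx\}$, which consists of the products $yx$ with $y\in x^G$, has exactly $m$ elements, and each of them is inverted by $x$. Since $|G|=2m$ with $m$ odd, $G$ has a normal $2$-complement $A$ of order $m$ (for instance via the regular representation: $x$ acts as a product of $m$ transpositions, an odd permutation, so the even elements form a subgroup of index $2$). All elements $yx$ have odd order, so they lie in $A$, and $|I|=m=|A|$ gives $I=A$. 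Thus $x$ inverts every element of $A$. Inversion is then an automorphism of $A$, since $(ab)^{-1}=(ab)^x=a^xb^x=a^{-1}b^{-1}$, and this forces $A$ to be abelian. Therefore $G=A\rtimes\langle x\rangle=D(A)$ with $|A|=m$ odd, which completes the proof.
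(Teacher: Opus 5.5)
Your proof is correct. Its skeleton is the same as the paper's: show a Sylow $2$-subgroup has order $2$, so $|G|=2m$ with $m$ odd; take the normal subgroup $A$ of index $2$; show $x$ inverts every element of $A$; conclude that $A$ is abelian and $G=A\rtimes\langle x\rangle$.

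The difference is that the paper only asserts two facts that you actually prove.
\begin{enumerate}
\item A group of order $2m$ with $m$ odd has a normal subgroup of index $2$. You prove this with the sign of the regular representation.
\item A fixed-point-free involutory automorphism of an odd-order group is inversion. The paper states this as ``the only possibility''. Your injective map $g\mapsto g^{-1}g^{x}$ on right cosets of $C(x)$, giving $|\{yx : y\in x^G\}|=m=|A|$, is exactly the standard proof of this lemma, run inside $G$ rather than inside $N$.
\end{enumerate}
So your version is self-contained, whereas the paper's is shorter but leans on an unproved lemma.

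The opening Frobenius discussion and the set $K$ are never used and can be deleted.

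Two steps deserve one more line each.
\begin{enumerate}
\item When the involution in $\langle xy\rangle$ ``equals $x$, a contradiction'', say why. Then $x$ commutes with $xy$, hence $y\in C(x)=\{e,x\}$, so $y=x$ and $xy=e$, which does not have even order. You can avoid the order argument altogether: $x,y\notin A$ and $[G:A]=2$ give $yx\in A$ directly.
\item In the Sylow step, the case $Z(P)=\langle x\rangle$ is handled by noting $P\le C(x)$ with $|P|\ge 4$.
\end{enumerate}
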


\begin{proof}
First, we prove that the order of $G$ must be twice an odd number. Let $P$ be a Sylow $2$-subgroup of $G$. Suppose that $|P| > 2$, and let $x \in P$.

Since $P$ is a finite $2$-group, its center $Z(P)$ is nontrivial, that is,
\[
Z(P) \neq \{e\}.
\]
Hence, there exists an element $z \in Z(P)$ with $z \neq e$.

Now consider the element $x \in P$. If $x \in Z(P)$, then $x$ lies in the center of $P$. If $x \notin Z(P)$, then since $z \in Z(P)$, we have
\[
zx = xz,
\]
so $x$ commutes with a nontrivial element $z \in Z(P)$.
Thus, in either case, either $x \in Z(P)$ or $x$ commutes with some nontrivial element of $P$, both of which contradict the hypothesis. Therefore, $|P| = 2$.
Since all Sylow $2$-subgroups of $G$ are conjugate, it follows that every Sylow $2$-subgroup of $G$ has order $2$. Hence, the order of $G$ is twice an odd number.
Let $G$ be a finite group of order $2m$, where $m$ is odd. Then $G$ has a normal subgroup $N$ of index $2$.
Now, $x \in G \setminus N,$ and $x$ normalizes $N$, and hence acts on $N$ by conjugation. We show that, this action fixes only the identity element of $N.$ In fact, \[
xnx^{-1} = n \implies xn=nx \implies n\in C(x).
\]
This is a contradiction, as $N$ is odd order subgroup and the order of $x$ is $2.$ Here we prove that $xnx^{-1} = n^{-1} \quad \text{for all } n \in N.$
Define $\varphi : N \to N$ by
\[
\varphi(n) = xnx^{-1}.
\]
Then $\varphi$ is an automorphism of $N$. Moreover, since $x^2 = e$, we have
\[
\varphi^2(n) = x(xnx^{-1})x^{-1} = n,
\]
so $\varphi$ is an involution. Thus, for each $n \in N$, the element $\varphi(n)$ is such that applying $\varphi$ twice returns $n$. 
Since the only fixed point of $\varphi$ is $e$, no nontrivial element is mapped to itself. Therefore, for every $n \neq e$, we must have $\varphi(n) \neq n.$ Now, in a finite group of odd order, 
the only possibility for such an involutory automorphism without nontrivial fixed points is inversion. Hence,
\[
\varphi(n) = n^{-1} \quad \text{for all } n \in N.
\]

Therefore,
\[
xnx^{-1} = n^{-1} \quad \text{for all } n \in N.
\]

We now show that $N$ is abelian. For any $a,b \in N$, we have
\[
x(ab)x^{-1} = (ab)^{-1} = b^{-1}a^{-1},
\]
while
\[
xax^{-1}\, xbx^{-1} = a^{-1}b^{-1}.
\]
Comparing these expressions yields $ab = ba$, and hence $N$ is abelian.

Thus, $G$ is generated by the abelian subgroup $N$ together with an element $x$ of order $2$ such that
\[
xnx^{-1} = n^{-1} \quad \text{for all } n \in N.
\]
Therefore,
\[
G \cong N \rtimes \langle x \rangle,
\]
where $x$ acts on $N$ by inversion. Consequently, $G$ is a generalized dihedral group, where $N$ is an abelian subgroup of odd order.
Hence $G$ is a generalized dihedral group over the abelian group $N$ of odd order.
\end{proof}

We can now prove the following upper bound for any nonabelian group $G$.

\begin{proof}
[Proof of Theorem \ref{prop:haji-improved}] 
For any element $g \in G \setminus Z(G)$, the centralizer $C(g)$ properly contains $Z(G)$. Moreover, as the order of $Z(G)$ divides $C(g)$ and by Theorem \ref{CentralizerIs2-GeneralizedDihedralGrp}, $|C(g)| \geq 3$ for any $g \in G \setminus Z(G)$, we have $|C(g)|-|Z(G)| \geq 2$ for any $g \in G \setminus Z(G)$. Using \cite{alon},
we have 
\[
\gamma(\mathcal{C}^{**}(G)) \leq  \displaystyle    \frac{ (|G|-|Z(G)|)(1+\ln (d+1))}{1+d}.  
\] 
As $Z(G)$ is nontrivial, $|C(g)|>2$. Therefore, using Theorem \ref{thm:haji}, we also have \[
\gamma(\mathcal{C}^{**}(G)) \leq  \displaystyle    \frac{ (|G|-|Z(G)|)}{2}.  
\] 
This completes the proof.  
\end{proof}

\begin{rem}
When $d$ is large, $\frac{1+\ln(d+1)}{1+d}$ is much smaller than $1/2$ and thus Proposition \ref{prop:haji-improved} gives a stronger bound than Theorem \ref{thm:haji} in that case. 
\end{rem}

\section{Proof of Theorem \ref{thm:dom-strong-t}}\label{PfMainThm2(UpperBnd-NilpotentGrp)}

In this section, we concentrate on graphs with dominating vertices. 
Let $\Gamma_1,\dots,\Gamma_t$ be graphs, and denote by
\[
\Gamma_1\otimes\cdots\otimes\Gamma_t
\]
their strong product. The following two propositions are immediate and so we omit the proofs. 

\begin{proposition}
For any graphs $\Gamma_1,\dots,\Gamma_t$,
\[
\Dom(\Gamma_1\otimes\cdots\otimes\Gamma_t)
=
\Dom(\Gamma_1)\times\cdots\times\Dom(\Gamma_t).
\]
\end{proposition}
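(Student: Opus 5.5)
We prove the two inclusions directly from the definition of the strong product. It is convenient to use closed neighbourhoods. In the strong product, two distinct vertices are adjacent exactly when, in every coordinate, the entries are equal or adjacent. Hence
\[
N_{\Gamma_1\otimes\cdots\otimes\Gamma_t}[(g_1,\dots,g_t)] = N_{\Gamma_1}[g_1]\times\cdots\times N_{\Gamma_t}[g_t].
\]
Checking this for $t=2$ from the three clauses (1)--(3) and then inducting on $t$ (the strong product is associative up to the obvious identification of vertex sets) establishes the identity. This identity is the only step needing care. The one subtlety is that the closed neighbourhood must be used rather than the open one, since the pair $(g,h)$ with $g=g'$, $h=h'$ is the vertex itself.

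Given the identity, $v=(g_1,\dots,g_t)$ is a dominating vertex iff $N[v]$ equals the whole vertex set $V(\Gamma_1)\times\cdots\times V(\Gamma_t)$. A product of subsets $A_1\times\cdots\times A_t$, with each $A_i\subseteq V(\Gamma_i)$ nonempty (indeed $g_i\in A_i$), equals the full product iff $A_i=V(\Gamma_i)$ for every $i$. Thus $v$ is dominating iff $N_{\Gamma_i}[g_i]=V(\Gamma_i)$ for all $i$, that is, iff each $g_i\in\Dom(\Gamma_i)$. This gives both inclusions at once.

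For the direction ``$\supseteq$'' one can also argue elementwise: if each $g_i$ is universal, then for any $(h_1,\dots,h_t)\neq v$, each coordinate satisfies $h_i=g_i$ or $h_i\sim g_i$, and not all coordinates are equal, so adjacency holds. For ``$\subseteq$'', if some $g_j\notin\Dom(\Gamma_j)$, choose $h_j\neq g_j$ nonadjacent to $g_j$ and keep the other coordinates equal to $g_i$; this vertex is not adjacent to $v$. The empty-set cases are consistent, since both sides are empty exactly when some $\Dom(\Gamma_i)$ is empty.
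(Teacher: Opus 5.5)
Your proof is correct: the identity $N[(g_1,\dots,g_t)]=N_{\Gamma_1}[g_1]\times\cdots\times N_{\Gamma_t}[g_t]$ follows directly from clauses (1)--(3), and a product of nonempty sets equals the full product only if every factor is the full vertex set. The paper omits the proof as immediate, and your closed-neighbourhood argument (or your elementwise argument) is exactly the routine verification it leaves to the reader.
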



\begin{proposition}
For any graphs $\Gamma_1,\dots,\Gamma_t$,
\[
\gamma(\Gamma_1\otimes\cdots\otimes\Gamma_t)
\le
\prod_{i=1}^t \gamma(\Gamma_i).
\]
\end{proposition}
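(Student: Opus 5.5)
The plan is to take minimum dominating sets in each factor and form their Cartesian product. Choose, for each $i$, a dominating set $D_i\subseteq V(\Gamma_i)$ with $|D_i|=\gamma(\Gamma_i)$. I would then show that
\[
D=D_1\times\cdots\times D_t
\]
dominates $\Gamma_1\otimes\cdots\otimes\Gamma_t$. Since $|D|=\prod_{i=1}^t\gamma(\Gamma_i)$, the inequality follows.

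The key observation is a description of closed neighbourhoods in the strong product. Unwinding the three adjacency conditions in the definition (and inducting on $t$, since the strong product is associative up to the obvious identification), two vertices $(g_1,\dots,g_t)$ and $(h_1,\dots,h_t)$ are equal or adjacent exactly when, for every $i$, either $g_i=h_i$ or $g_i\sim h_i$ in $\Gamma_i$. In other words,
\[
N[(g_1,\dots,g_t)]=N[g_1]\times\cdots\times N[g_t].
\]
Establishing this product formula for closed neighbourhoods is the only step needing care. For $t=2$ it is literally the definition: the three cases, together with equality, cover exactly the pairs in which each coordinate is equal or adjacent. The general case follows by associativity.

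Given this formula, the domination argument is immediate. Take an arbitrary vertex $v=(v_1,\dots,v_t)$. For each $i$, since $D_i$ dominates $\Gamma_i$, there is some $d_i\in D_i$ with $v_i\in N[d_i]$. Then $d=(d_1,\dots,d_t)\in D$ and $v\in N[d]$. Hence $D$ is a dominating set, and $\gamma(\Gamma_1\otimes\cdots\otimes\Gamma_t)\le|D|=\prod_i\gamma(\Gamma_i)$.

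I do not expect any real obstacle here. The only point to verify is the closed-neighbourhood product identity, in particular that the strong product of several factors is well defined, which amounts to associativity of the coordinatewise "equal-or-adjacent" relation.
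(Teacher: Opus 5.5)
Your proof is correct and is the intended argument. The paper omits the proof as immediate. Taking $D_1\times\cdots\times D_t$ of minimum dominating sets and using $N[(g_1,\dots,g_t)]=N[g_1]\times\cdots\times N[g_t]$ is the same coordinatewise idea behind the set $\{(v,u_2,\dots,u_t): v\in D_1\}$ in the upper bound of Theorem~\ref{thm:dom-strong-t}.
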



\begin{proof}[Proof of Theorem \ref{thm:dom-strong-t}] 
Let
\[
\Gamma
=
\Gamma_1\otimes\cdots\otimes\Gamma_t
\setminus
\bigl(
\Dom(\Gamma_1)\times\cdots\times\Dom(\Gamma_t)
\bigr),
\]
and set
\[
k
=
\min_{1\le i\le t}
\gamma(\Gamma_i\setminus \Dom(\Gamma_i)).
\]

\medskip
\noindent
We at first show that $\gamma(\Gamma) \leq k$. Without loss of generality, assume
$
k=\gamma(\Gamma_1\setminus \Dom(\Gamma_1)).
$
Let $D_1$ be a minimum dominating set of
$\Gamma_1\setminus \Dom(\Gamma_1)$, and for each $j=2,\dots,t$ fix a vertex
$u_j\in \Dom(\Gamma_j)$.
Define
\[
D=\{(v,u_2,\dots,u_t): v\in D_1\}.
\]
We claim that $D$ dominates $\Gamma$.
Let $(x_1,\dots,x_t)\in V(\Gamma)$.
If $x_1\notin \Dom(\Gamma_1)$, then there exists $v\in D_1$
such that $v=x_1$ or $v$ is adjacent to $x_1$ in $\Gamma_1$.
Since each $u_j$ is universal in $\Gamma_j$ for $j\ge 2$,
the vertex $(v,u_2,\dots,u_t)$ is adjacent to $(x_1,\dots,x_t)$
in the strong product. If $x_1 \in \Dom(\Gamma_1) $, then $(x_1, x_2, \dots, x_t)$ is of course adjacent to $(v, u_2, \dots, u
_t)$. 
Thus $D$ dominates $\Gamma$, and
\[
\gamma(\Gamma)\le |D|=k.
\]

\medskip
\noindent
We next show that $k$ is also a lower bound. 
Suppose, for contradiction, that there exists a dominating set
$D\subseteq V(\Gamma)$ with $|D|<k$.
For each $i$, define the projection
\[
\pi_i(D)=\{x_i\in V(\Gamma_i):
(x_1,\dots,x_t)\in D \text{ for some } x_j\}.
\]
Since $|\pi_i(D)|\le |D|<\gamma(\Gamma_i\setminus \Dom(\Gamma_i))$,
the set $\pi_i(D)$ does not dominate
$\Gamma_i\setminus \Dom(\Gamma_i)$.
Hence there exists
\[
x_i^0\in V(\Gamma_i)\setminus \Dom(\Gamma_i)
\]
which is neither equal nor adjacent to any vertex of $\pi_i(D)$.

The vertex $(x_1^0,\dots,x_t^0)$ belongs to $V(\Gamma)$.
Let $(x_1,\dots,x_t)\in D$.
Then for each $i$, $x_i$ is neither equal nor adjacent to $x_i^0$ in $\Gamma_i$,
so $(x_1,\dots,x_t)$ is not adjacent to $(x_1^0,\dots,x_t^0)$
in the strong product.
Thus $(x_1^0,\dots,x_t^0)$ is not dominated by $D$,
a contradiction.

Hence $\gamma(\Gamma)\ge k$, completing the proof.
\end{proof}

In the next result, we show that the domination number of the proper graph of a graph does not change if we take the strong product with a complete graph.

\begin{lemma}\label{thm: Dom_St_Kn}
    Let $\Gamma = \Gamma_1 \otimes K_n$, where $\Gamma_1$ is any non-complete simple graph and $K_n$ denotes the complete graph with $n$ vertices. Let $\Dom(\Gamma_1) \neq \emptyset$. Then
    \[
    \gamma(\Gamma\setminus \Dom(\Gamma)) = \gamma(\Gamma_1 \setminus \Dom(\Gamma_1)).
    \]
\end{lemma}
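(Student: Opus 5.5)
The plan is to reduce the lemma to Theorem \ref{thm:dom-strong-t} with $t=2$, $\Gamma_2=K_n$. That theorem needs both factors non-complete, which fails for $K_n$, so I would instead adapt its proof directly. First note that $\Dom(K_n)=V(K_n)$. By the proposition on dominating vertices of strong products, $\Dom(\Gamma)=\Dom(\Gamma_1)\times V(K_n)$. Hence
\[
V(\Gamma\setminus\Dom(\Gamma))=\bigl(V(\Gamma_1)\setminus\Dom(\Gamma_1)\bigr)\times V(K_n).
\]
Since every two vertices of $K_n$ are equal or adjacent, two distinct vertices $(x,a),(y,b)$ of $\Gamma$ are adjacent if and only if $x=y$ or $x\sim y$ in $\Gamma_1$. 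Thus $\Gamma\setminus\Dom(\Gamma)$ is exactly the strong product $(\Gamma_1\setminus\Dom(\Gamma_1))\otimes K_n$. In other words, it is the lexicographic blow-up of $\Gamma_1\setminus\Dom(\Gamma_1)$ in which each vertex is replaced by a clique of size $n$.

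Write $H=\Gamma_1\setminus\Dom(\Gamma_1)$, which is nonempty because $\Gamma_1$ is non-complete.

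For the upper bound, take a minimum dominating set $D_1$ of $H$, fix $u\in V(K_n)$, and set $D=D_1\times\{u\}$. For any vertex $(x,a)$, some $v\in D_1$ has $v=x$ or $v\sim x$. In either case $(v,u)$ equals or is adjacent to $(x,a)$. So $\gamma(\Gamma\setminus\Dom(\Gamma))\le\gamma(H)$.

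For the lower bound, let $D$ be any dominating set of $\Gamma\setminus\Dom(\Gamma)$ and take its projection $\pi_1(D)\subseteq V(H)$. For each $x\in V(H)$, the vertex $(x,a)$ is dominated by some $(y,b)\in D$, which forces $y=x$ or $y\sim x$. Hence $\pi_1(D)$ dominates $H$, and $\gamma(H)\le|\pi_1(D)|\le|D|$.

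The only step needing care is the identification of the proper graph: computing $\Dom(\Gamma_1\otimes K_n)$ and checking that the induced adjacency reduces to closed-neighbourhood adjacency in the first coordinate. Once that is in place, both inequalities are immediate, and the non-completeness of $\Gamma_1$ only guarantees that $H$ is nonempty.
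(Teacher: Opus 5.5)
Your proof is correct and follows essentially the same route as the paper: the same set $D_1\times\{u\}$ for the upper bound, and the same first-coordinate projection for the lower bound. The only differences are presentational. The paper argues the lower bound by contradiction, while you show directly that $\pi_1(D)$ dominates $\Gamma_1\setminus\Dom(\Gamma_1)$. You also make explicit the identification $\Gamma\setminus\Dom(\Gamma)=(\Gamma_1\setminus\Dom(\Gamma_1))\otimes K_n$, which the paper uses only implicitly.
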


\begin{proof}
    Let 
    $
    k= \gamma(\Gamma_1 \setminus \Dom(\Gamma_1))$
    and $D_1$ be a minimum dominating set of $\Gamma_1\setminus \Dom(\Gamma_1)$. Let $u \in V(K_n)$. 
    Define
    \[
    D=\{(v,u): v \in D_1\}.
    \]
    We claim that $D$ dominates $\Gamma\setminus \Dom(\Gamma)$. Let $(x,y) \in V(\Gamma\setminus \Dom(\Gamma))$. Then $(x,y) \notin \Dom(\Gamma)$, By the theorem, $\Dom(\Gamma) = \Dom(\Gamma_1) \times K_n$, and hence $x\notin \Dom(\Gamma_1)$. If $x \notin (\Dom(\Gamma_1))$, then there exists $v\in D_1$ such that $v=x$ or $v$ is adjacent to $x$ in $\Gamma_1$. Since $u \in V(K_n)$, the vertex $(v,u)$ is adjacent to $(x,y)$ in the strong product. 
    Thus $D$ dominates $\Gamma\setminus \Dom(\Gamma)$, and 
    \[
    \gamma(\Gamma\setminus \Dom(\Gamma)) \leq |D| =  \gamma(\Gamma_1\setminus \Dom(\Gamma_1)).
    \]
   
    Suppose, there exists a dominating set $D \subseteq V(\Gamma\setminus Dom(\Gamma))$ with $|D| < k$. Then 
    \[
    |D|< \gamma(\Gamma_1\setminus Dom(\Gamma_1))
    \]
    Consider the following projections
    \[
    \pi_1(D) = \{x\in V(\Gamma_1):(x,y)\in D \text{ for some } y\},
    \]
    and 
    \[
    \pi_2(D) = \{y\in V(K_n):(x,y)\in D \text{ for some } x\}.
    \]
    Since $|\pi_1(D)| < |D_1|$, the set $\pi_1(D)$ does not dominate $\Gamma_1\setminus \Dom(\Gamma_1)$. Hence there exists 
    \[
    x_0 \in V(\Gamma_1)\setminus \Dom(\Gamma_1)
    \]
    such that $x_0$ is neither equal nor adjacent to any vertex of $\pi_1(D)$.
    Take a vertex $(x_0,u_1) \in V(\Gamma\setminus \Dom(\Gamma))$, where $u_1\in V(K_n)$. 
    Let $(x,y)\in D$ be arbitrary. Then $x\in \pi_1(D)$ and $y\in \pi_2(D)$. By construction, $x$ is neither equal nor adjacent to $x_0$ in $\Gamma_1$. Therefore, $(x,y)$ is not adjacent to $(x_0,u_1)$ in the strong product. 
    Thus, $(x_0,u_1)$ is not dominated by $D$, contradicting the assumption that $D$ is a dominating set of $\Gamma\setminus \Dom(\Gamma)$.
    Hence $|D| \geq k$.
    This completes the proof.  
\end{proof}

The following proposition follows from the definition of the commuting graph.   

\begin{proposition}\label{Prop:Com_strong-product}
Let $G_1,G_2,\dots,G_n$ be finite groups. Then
\[
\mathcal C(G_1\times G_2\times \cdots \times G_n) = \mathcal C(G_1)\otimes \mathcal C(G_2)\otimes \cdots \otimes \mathcal C(G_n).
\]
\end{proposition}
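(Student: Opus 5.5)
The plan is to verify directly that the two graphs have the same vertex set and the same edge set. By induction on $n$ it suffices to treat $n=2$, because the strong product is associative up to the natural identification $(V_1\times V_2)\times V_3 = V_1\times V_2\times V_3$. The same holds for direct products of groups, since $(G_1\times G_2)\times G_3\cong G_1\times G_2\times G_3$ compatibly with these identifications.

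Both graphs clearly have vertex set $G_1\times G_2$. For the edges, take two distinct vertices $(g_1,h_1)\neq(g_2,h_2)$. By the componentwise multiplication in a direct product, they commute in $G_1\times G_2$ if and only if both $g_1g_2=g_2g_1$ in $G_1$ and $h_1h_2=h_2h_1$ in $G_2$.

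The only subtle point is that $\mathcal{C}(G_i)$ has no loops, so "commute" must be translated into "equal or adjacent." In $G_1$, the condition $g_1g_2=g_2g_1$ holds if and only if $g_1=g_2$ or $g_1\sim g_2$ in $\mathcal{C}(G_1)$, and the same is true in $G_2$. So $(g_1,h_1)$ and $(g_2,h_2)$ commute exactly when each coordinate pair is equal or adjacent. Since the two vertices are distinct, the case where both coordinates are equal cannot occur. The three remaining combinations are exactly conditions (1), (2), (3) in the definition of the strong product. Hence the edge sets coincide.

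For general $n$, the same argument applies verbatim: two distinct tuples are adjacent in the iterated strong product if and only if in every coordinate they are equal or adjacent (and they are not equal in all coordinates). This matches commutation in $G_1\times\cdots\times G_n$. I expect no real obstacle; the only care needed is the equal-or-adjacent bookkeeping and the associativity identification.
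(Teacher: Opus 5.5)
Your proposal is correct and matches the paper's approach: the paper gives no proof beyond saying the proposition follows from the definition of the commuting graph. Your coordinatewise check is exactly that definitional verification. Commuting in each factor means ``equal or adjacent,'' the all-equal case is excluded by distinctness, and the remaining cases are conditions (1)--(3) of the strong product; the associativity remark covers general $n$.
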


For a finite group $G$, let $\pi(G)$ denote the set of prime divisors of $G$. 

\begin{theorem}
\label{MainThm-2(UpperBnd-NilpotentGrp)} 
Let $G$ be a finite nilpotent group with $\pi(G)=\{p_1,p_2,\ldots,p_t\}$. 
For each $i \in [t]$, let $P_i$ be a Sylow $p_i$-subgroup of $G$. 
Then
\[
\gamma\bigl(\mathcal{C}^{**}(G)\bigr)
=
\min_{\substack{1 \le i \le t \\ P_i \text{ non-abelian}}}
\gamma\bigl(\mathcal{C}^{**}(P_i)\bigr).
\]
\end{theorem}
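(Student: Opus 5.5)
We use the decomposition $G = P_1 \times \cdots \times P_t$, which holds because $G$ is nilpotent. By Proposition \ref{Prop:Com_strong-product}, $\mathcal{C}(G) = \mathcal{C}(P_1)\otimes\cdots\otimes\mathcal{C}(P_t)$. In a commuting graph the dominating vertices are exactly the central elements, so $\Dom(\mathcal{C}(P_i)) = Z(P_i)$. Since $Z(G) = Z(P_1)\times\cdots\times Z(P_t)$, the proper commuting graph $\mathcal{C}^{**}(G)$ is the induced subgraph
\[
\mathcal{C}(P_1)\otimes\cdots\otimes\mathcal{C}(P_t)\setminus\bigl(\Dom(\mathcal{C}(P_1))\times\cdots\times\Dom(\mathcal{C}(P_t))\bigr).
\]
There is one subtlety: $\Dom$ of a graph, as defined in the paper, uses closed neighbourhoods. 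The identity is always in $Z(P_i)$, so each $\Dom(\mathcal{C}(P_i))$ is nonempty.

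Next we sort the factors. Let $I$ be the set of indices with $P_i$ non-abelian; $I \neq \emptyset$ since $G$ is non-abelian. For $i\in I$, the graph $\mathcal{C}(P_i)$ is non-complete with nonempty $\Dom$, and $\mathcal{C}(P_i)\setminus \Dom(\mathcal{C}(P_i)) = \mathcal{C}^{**}(P_i)$. For $i\notin I$, the graph $\mathcal{C}(P_i)$ is the complete graph $K_{|P_i|}$ and its $\Dom$ is all of $P_i$.

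We absorb the abelian factors first. Let $H=\prod_{i\in I}P_i$ and $A=\prod_{i\notin I}P_i$. Then $\mathcal{C}(G)=\mathcal{C}(H)\otimes K_{|A|}$, and $\mathcal{C}(H)$ is non-complete with $\Dom(\mathcal{C}(H)) = Z(H)\neq\emptyset$. Lemma \ref{thm: Dom_St_Kn} then gives $\gamma(\mathcal{C}^{**}(G))=\gamma(\mathcal{C}^{**}(H))$. Its proof uses $\Dom(\Gamma_1\otimes K_n)=\Dom(\Gamma_1)\times V(K_n)$, which follows from the first unnamed proposition of this section. The strong product of complete graphs is complete, so the abelian factors can be merged into a single $K_{|A|}$; if $A$ is trivial, this step is vacuous.

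It remains to apply Theorem \ref{thm:dom-strong-t} to the factors $\Gamma_i=\mathcal{C}(P_i)$, $i\in I$. All of them are non-complete with nonempty $\Dom$, so
\[
\gamma(\mathcal{C}^{**}(H))=\min_{i\in I}\gamma\bigl(\mathcal{C}(P_i)\setminus\Dom(\mathcal{C}(P_i))\bigr)=\min_{i\in I}\gamma(\mathcal{C}^{**}(P_i)).
\]
If $|I|=1$, this is trivially true. The main point requiring care is the identification of $\mathcal{C}^{**}(G)$ with the proper graph of the strong product, i.e.\ checking that $\Dom$ of each commuting graph equals the center and that the center of the direct product is the product of the centers. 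Each of these is routine.
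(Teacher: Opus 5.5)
Your proposal is correct and is essentially the paper's proof. Both write $\mathcal{C}(G)$ as a strong product via Proposition~\ref{Prop:Com_strong-product}, merge the abelian Sylow factors into one complete factor $K_r$ that Lemma~\ref{thm: Dom_St_Kn} removes, and apply Theorem~\ref{thm:dom-strong-t} to the non-abelian factors; the only difference is that the paper does the last two steps in the opposite order.
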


\begin{proof}
Proposition~\ref{Prop:Com_strong-product} implies that
\[
\mathcal{C}(P_1 \times \cdots \times P_t)
=
\mathcal{C}(P_1) \otimes \cdots \otimes \mathcal{C}(P_t).
\]
Let $S= \{i \in [t]: P_i \mbox{ non-abelian}\}$. Then, 
\[
\mathcal{C}(P_1 \times \cdots \times P_t)
=  \mathcal{C}( \prod_{i \in S} P_i ) \otimes K_r
\]
where $r=\prod_{i \in [t] \setminus S} |P_i|$. 
Since each $P_i$ is non-abelian for $i \in S$, the commuting graph $\mathcal{C}(P_i)$ is non-complete for $i \in S$. Moreover, $\mathrm{Dom}(\mathcal{C}(P_i)) = Z(P_i) \neq \emptyset$
for each $ i \in S.$ Therefore, 
using Theorem~\ref{thm:dom-strong-t}, we obtain
\[
\gamma\bigl(\mathcal{C}^{**}( \prod_{i \in S} P_i)\bigr) 
=
\min_{i \in S}
\gamma\bigl(\mathcal{C}^{**}(P_i)\bigr).
\]
We now use Lemma \ref{thm: Dom_St_Kn} to get 
\[
\gamma\bigl(\mathcal{C}^{**}(G)\bigr)
= \gamma\bigl(\mathcal{C}^{**}( \prod_{i \in S} P_i)\bigr) = 
\min_{i \in S}
\gamma\bigl(\mathcal{C}^{**}(P_i)\bigr).
\]
This completes the proof.
\end{proof}


\begin{corollary}
Let $G$ be a finite nilpotent group such that each $P_i$ is non-abelian and $|P_i|=8$ for some $i\in [t]$. Then 
\[
\gamma(\mathcal{C}^{**}(G))=3.
\]
\end{corollary}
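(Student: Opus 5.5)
The plan is to reduce everything to the single Sylow subgroup of order $8$ by combining Theorem \ref{MainThm-2(UpperBnd-NilpotentGrp)} with Corollary \ref{Lemma:DomCommAnygrp-ge-3}. Since every $P_i$ is assumed non-abelian, Theorem \ref{MainThm-2(UpperBnd-NilpotentGrp)} gives
\[
\gamma\bigl(\mathcal{C}^{**}(G)\bigr)=\min_{1\le i\le t}\gamma\bigl(\mathcal{C}^{**}(P_i)\bigr).
\]
Corollary \ref{Lemma:DomCommAnygrp-ge-3}, applied to each non-abelian $P_i$, shows every term of this minimum is at least $3$. So it suffices to show that the Sylow subgroup $P_{i_0}$ of order $8$ satisfies $\gamma(\mathcal{C}^{**}(P_{i_0}))\le 3$; then the minimum equals $3$.

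For that bound, note that $P_{i_0}$ is a non-abelian group of order $8$, hence isomorphic to $D_8$ or $Q_8$. In either case $|Z(P_{i_0})|=2$. For each noncentral $x$, the centralizer $C(x)$ properly contains $Z(P_{i_0})$ and is a proper subgroup, so it has order $4$. Thus there are exactly three centralizers of noncentral elements, namely the three subgroups of order $4$, and they cover $P_{i_0}$.

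Choosing one noncentral element from each of these centralizers gives a dominating set of size $3$, since $\sigma_c=\gamma$. Alternatively, Proposition \ref{Thm:DomofCommtingGraphAndMaximalCyclicSubgrp} can be used. In $Q_8$ the maximal cyclic subgroups are the three subgroups of order $4$, none central, so $T-U=3$. In $D_8$, however, this count gives $T-U=5$, so for $D_8$ I would use the centralizer covering instead: $\{r, s, sr\}$, whose centralizers are $\langle r\rangle$, $\{e,r^2,s,sr^2\}$ and $\{e,r^2,sr,sr^3\}$.

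None of the steps is hard. The only point that needs care is identifying the two non-abelian groups of order $8$ and checking the centralizer covering explicitly for $D_8$, because the maximal-cyclic-subgroup bound of Proposition \ref{Thm:DomofCommtingGraphAndMaximalCyclicSubgrp} is not sharp there.
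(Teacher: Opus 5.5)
Your proof is correct and follows the paper's argument. You use Theorem~\ref{MainThm-2(UpperBnd-NilpotentGrp)} to reduce to the minimum over the Sylow subgroups, bound every term below by $3$ with Corollary~\ref{Lemma:DomCommAnygrp-ge-3}, and show that the order-$8$ Sylow subgroup attains $3$; the only difference is that you check this last step directly via the three order-$4$ centralizers in $D_8$ and $Q_8$ (correctly noting that $T-U=5$ for $D_8$), whereas the paper just cites Proposition~\ref{cor:heisenberg} for groups of order $p^3$.
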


\begin{proof}
Since $|P_i|=8$, we have $|Z(P_i)|=2$. Hence, by Corollary~\ref{cor:heisenberg}, 
\[
\gamma(\mathcal{C}^{**}(P_i))=3.
\]
Moreover, by Corollary~\ref{Lemma:DomCommAnygrp-ge-3}, we have 
\[
\gamma(\mathcal{C}^{**}(P_i)) \ge 3
\]
for each $i$. Therefore, by Theorem 
\ref{MainThm-2(UpperBnd-NilpotentGrp)}, it follows that
\[
\gamma(\mathcal{C}^{**}(G))=3.
\]
\end{proof} 

As another application of Theorem \ref{thm:dom-strong-t}, we get the domination number of the proper enhanced power graph of a nilpotent group. We at first recall the definition of  enhanced power graph. Given a group $G$, the enhanced power graph of $G$, denoted by $\mathcal{G}_E(G)$, is the graph with vertex set $G$, in which two vertices $u$ and $v$ are adjacent if and only if there exists an element $w \in G$ such that both $u$ and $v$ are powers of $w$. The \textit{proper enhanced power graph} of $G$, denoted by $\mathcal{G}_E^{**}(G)$, is the graph obtained from the enhanced power graph $\mathcal{G}_E(G)$ by deleting all dominating vertices.

\begin{corollary}[Theorem 2.5, \cite{BeraDey2022epow}]
    Let $ G_1 $ be a product of non-cyclic $ p $-groups of the form:
    \[
    G_1 = P_1 \times P_2 \times \cdots \times P_m,
    \]
    where $ m \geq 2 $ and, for each $i\in [m]$, $ P_i $  is a $ p_i $-group which is neither cyclic nor generalized quaternion. Let $ s_i $ be the number of distinct $ p_i $-order subgroups of $ G_1 $.
    Then,
    \[
    \gamma(\mathcal G^{**}_E(G_1)) = \min_{i \in [m]} \{s_i\}. 
    \]
\end{corollary}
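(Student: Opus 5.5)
The plan is to apply Theorem~\ref{thm:dom-strong-t} to the enhanced power graph, mirroring the nilpotent commuting-graph argument (Theorem~\ref{MainThm-2(UpperBnd-NilpotentGrp)}), and then compute each factor's proper domination number directly. The argument has three steps.

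\emph{Step 1: product decomposition.} First we establish the analogue of Proposition~\ref{Prop:Com_strong-product} for enhanced power graphs. Since the orders of the $P_i$ are pairwise coprime, we claim
\[
\mathcal G_E(P_1\times\cdots\times P_m)=\mathcal G_E(P_1)\otimes\cdots\otimes\mathcal G_E(P_m).
\]
Indeed, $(x_1,\dots,x_m)$ and $(y_1,\dots,y_m)$ lie in a common cyclic subgroup iff for each $i$ the pair $x_i,y_i$ lies in a common cyclic subgroup of $P_i$. The backward direction uses that a product of cyclic groups of coprime orders is cyclic. The forward direction uses projection. Equality or adjacency in each coordinate is exactly adjacency or equality in the strong product.

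\emph{Step 2: each factor is non-complete with a nonempty dominating set.} Each $\mathcal G_E(P_i)$ is non-complete because $P_i$ is not cyclic, and a finite group with complete enhanced power graph is cyclic. The identity is a dominating vertex, so $\Dom(\mathcal G_E(P_i))\neq\emptyset$. Then the dominating vertices of the product are the product of the $\Dom$'s (the first proposition of Section~\ref{PfMainThm2(UpperBnd-NilpotentGrp)}). Hence $\mathcal G_E^{**}(G_1)$ is exactly the graph in Theorem~\ref{thm:dom-strong-t}, and
\[
\gamma(\mathcal G_E^{**}(G_1))=\min_i \gamma(\mathcal G_E^{**}(P_i)).
\]

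\emph{Step 3: computing $\gamma(\mathcal G_E^{**}(P_i))=s_i$.} This is the main obstacle. We first use the standard fact that for a $p$-group that is neither cyclic nor generalized quaternion, $\Dom(\mathcal G_E(P_i))=\{e\}$. A dominating vertex $x\neq e$ would force every element to lie in a common cyclic subgroup with $x$. In particular $\langle x\rangle$ would contain the unique subgroup of order $p$ of each cyclic subgroup, giving a unique subgroup of order $p$ overall. That makes $P_i$ cyclic or generalized quaternion, a contradiction.

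So $\mathcal G_E^{**}(P_i)$ has vertex set $P_i\setminus\{e\}$.

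\emph{Lower bound.} Fix the $s_i$ subgroups of order $p_i$. Every nonidentity element $y$ of a $p$-group determines a unique order-$p$ subgroup $\Omega(y)$ inside $\langle y\rangle$. If $y$ and $z$ are adjacent, they lie in a common cyclic $p$-group, which has a unique order-$p$ subgroup, so $\Omega(y)=\Omega(z)$. Hence each vertex dominates only within its own class, and any dominating set must meet all $s_i$ classes, giving $\gamma\ge s_i$.

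\emph{Upper bound.} Choosing a generator of each order-$p$ subgroup works. Any $y$ is a power of some $w$, namely $w=y$, and the generator of $\Omega(y)$ is also a power of $y$. So $y$ is adjacent or equal to that generator, giving $\gamma\le s_i$.

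Combining Steps 2 and 3 gives $\gamma(\mathcal G_E^{**}(G_1))=\min_i s_i$. We read $s_i$ as the number of order-$p_i$ subgroups of $P_i$, which equals the number in $G_1$.
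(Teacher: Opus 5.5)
Your proposal is correct and follows the paper's route: write $\mathcal G_E(G_1)$ as the strong product $\mathcal G_E(P_1)\otimes\cdots\otimes\mathcal G_E(P_m)$, apply Theorem~\ref{thm:dom-strong-t} to get $\min_i \gamma(\mathcal G_E^{**}(P_i))$, then evaluate each factor. The only difference is that the paper asserts the product decomposition as easy and cites \cite[Theorem 2.4]{BeraDey2022epow} for $\gamma(\mathcal G_E^{**}(P_i))=s_i$, whereas you prove both directly — the factor count via $\Dom(\mathcal G_E(P_i))=\{e\}$ and the splitting of $P_i\setminus\{e\}$ into mutually non-adjacent classes indexed by the subgroups of order $p_i$ — which makes your argument self-contained and sound.
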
 

\begin{proof}
Since \[G_1 = P_1 \times P_2 \times \cdots \times P_m,\] where $ m \geq 2 $ and each $ P_i $ is a $p_i$-group, where $ p_i $ is a prime number, it follows that $\gcd(|P_i|,|P_j|)=1$ for all $i\ne j$, where $i,j\in [m]$. It is easy to see that:
\[
\mathcal G_E(G_1) = \mathcal G_E(P_1) \otimes \mathcal G_E(P_2) \otimes \cdots \otimes \mathcal G_E(P_m).
\]
Applying Theorem~\ref{thm:dom-strong-t}, we obtain:
\[
\gamma(\mathcal G_E(G_1) \setminus \text{Dom}(\mathcal G_E(G_1))) = \min_{i \in [m]} \big\{ \gamma(\mathcal G_E(P_i) \setminus \text{Dom}(\mathcal G_E(P_i))) \big\},
\]
which leads to:
\[
\gamma(\mathcal G^{**}_E(G_1)) = \min_{i \in [m]} \big\{ \gamma(\mathcal G^{**}_E(P_i)) \big\}.
\] 
Let $G_1$ be a finite $p$-group which is neither cyclic nor generalized quaternion. By [Theorem 2.4, \cite{BeraDey2022epow}], $\gamma\!\left(\mathcal{G}^{**}_E(G_1)\right)$ is equal to the number of distinct $p$-order subgroups of $G_1$. This completes the proof.  
\end{proof}


 \section{Proof of Theorems \ref{Thm:TotalDomOfProperCommutinGraph} and \ref{thm:tot-dom-strong-t}}
 \label{sec:total-dom-num} 

In this section, we first present the proof of Theorem~\ref{Thm:TotalDomOfProperCommutinGraph}. 
 
\begin{proposition}\label{Prop:CommutingGraphHasTDIff}
Let $G$ be a finite non-abelian group. Then $\mathcal{C}^{**}(G)$ possesses a total dominating set if and only if 
\[
|C(x)| \geq 3, \quad \text{for all } x \in G \setminus Z(G).
\]
\end{proposition}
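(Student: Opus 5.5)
The plan is to use the standard fact that a graph admits a total dominating set if and only if it has no isolated vertices. Indeed, if every vertex has a neighbour, then the whole vertex set $V$ is a total dominating set. Conversely, an isolated vertex $v$ is never adjacent to any element of any candidate set $T$, so no total dominating set can exist. The proposition therefore reduces to showing that $\mathcal{C}^{**}(G)$ has no isolated vertices if and only if $|C(x)|\ge 3$ for all $x\in G\setminus Z(G)$.

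Next, I will describe the neighbourhood of a vertex explicitly. For $x\in G\setminus Z(G)$, the open neighbourhood of $x$ in $\mathcal{C}^{**}(G)$ is
\[
N(x)=C(x)\setminus \bigl(Z(G)\cup\{x\}\bigr).
\]
Hence $x$ is isolated if and only if $C(x)=Z(G)\cup\{x\}$. In that case $|C(x)|=|Z(G)|+1$.

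The key group-theoretic step is to compare this with $|C(x)|$. Since $Z(G)$ is a proper subgroup of the subgroup $C(x)$, the order $|Z(G)|$ divides $|C(x)|$, so $|C(x)|\ge 2|Z(G)|$. If $x$ is isolated, this gives $|Z(G)|+1\ge 2|Z(G)|$, which forces $|Z(G)|=1$ and $|C(x)|=2$. Conversely, if $|C(x)|=2$ for some noncentral $x$, then $C(x)=\{e,x\}$. The only elements commuting with $x$ are then $e$ (which is central) and $x$ itself, so $x$ is isolated. Thus $x$ is isolated exactly when $|C(x)|=2$. Since $|C(x)|\ge 2$ always holds for noncentral $x$ (because $e,x\in C(x)$), this is the same as $|C(x)|<3$.

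Combining the two steps gives the equivalence. The only delicate point, and the one I would write carefully, is the divisibility argument ruling out $|C(x)|=|Z(G)|+1$ when $Z(G)$ is nontrivial. Finally, I would note that together with Theorem \ref{CentralizerIs2-GeneralizedDihedralGrp} this yields the first assertion of Theorem \ref{Thm:TotalDomOfProperCommutinGraph}.
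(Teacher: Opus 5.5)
Your proof is correct and follows essentially the same route as the paper. Both directions rest on the same observation: a noncentral $x$ has a noncentral neighbour exactly when $|C(x)\setminus Z(G)|\ge 2$, Lagrange's theorem ($|C(x)|\ge 2|Z(G)|$) turns this into $|C(x)|\ge 3$, and $|C(x)|=2$ makes $x$ isolated. The paper builds an explicit total dominating set from two commuting noncentral elements of each centralizer instead of taking the whole vertex set, and your divisibility argument handles the case $|Z(G)|=1$ more carefully than the paper's one-line appeal to Lagrange.
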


\begin{proof}
First suppose that $G$ is a non-abelian group such that $|C(x)| \geq 3$ for all $x \in G \setminus Z(G)$. Since $Z(G)$ is a subgroup of $C(x)$, by Lagrange's theorem we have $|Z(G)| \leq |C(x)|-2$. Therefore, each $C(x)$ contains at least two elements from $G \setminus Z(G)$.

Now consider a set $T$ obtained by taking two elements, namely $x$ and any element $y \in C(x)$, from each centralizer $C(x)$. Clearly, every vertex in $\mathcal{C}^{**}(G) \setminus T$ is adjacent to at least one vertex in $T$, and each vertex of $T$ is also adjacent to another vertex of $T$. Hence, $T$ forms a total dominating set of the graph $\mathcal{C}^{**}(G)$.

Conversely, suppose there exists $x \in G \setminus Z(G)$ such that $|C(x)| = 2$. Since the identity element $e$ of the group belongs to $C(x)$, it follows that $C(x) = \{e,x\}$. Thus $x$ has no adjacent vertex in $\mathcal{C}^{**}(G)$, and hence $x$ is an isolated vertex of the graph. Therefore, $\mathcal{C}^{**}(G)$ does not possess any total dominating set.
This completes the proof.
\end{proof}

\begin{proof}[Proof of Theorem \ref{Thm:TotalDomOfProperCommutinGraph}]
By Proposition \ref{Prop:CommutingGraphHasTDIff}, and Theorem \ref{CentralizerIs2-GeneralizedDihedralGrp}, $\mathcal{C}^{**}(G)$ admits a total dominating set if and only if $G$ is not isomorphic to a generalized dihedral group of order $2m,$ where $m$ is odd, completing the proof.
\end{proof}

We next give the proof of Theorem \ref{thm:tot-dom-strong-t} which compares the total domination number of the proper graph of the strong product of graphs with the total domination number of the proper graph of the individual graphs. 


\begin{proof}[Proof of Theorem \ref{thm:tot-dom-strong-t}]
Let
\[
\Gamma
=
\Gamma_1\otimes\cdots\otimes\Gamma_t
\setminus
\bigl(
\Dom(\Gamma_1)\times\cdots\times\Dom(\Gamma_t)
\bigr).
\]
We at first prove the first statement. Let $k=\min_{1\le i\le t}\gamma(\Gamma_i\setminus \Dom(\Gamma_i))$,
and assume without loss of generality that
$k=\gamma(\Gamma_1\setminus \Dom(\Gamma_1))$.
Let $D_1$ be a minimum dominating set of
$\Gamma_1\setminus \Dom(\Gamma_1)$.
For each $j=2,\dots,t$, fix $u_j\in \Dom(\Gamma_j)$ and define
\[
D'=\{(v,u_2,\dots,u_t): v\in D_1\}.
\]
Every vertex of $\Gamma\setminus D'$ is dominated by some vertex of $D'$.
Choose $u_1\in \Dom(\Gamma_1)$ and choose $w_j\notin \Dom(\Gamma_j)$
for some $j\ge 2$ (such a vertex exists by assumption).
Set
\[
D=D'\cup\{(u_1,w_2,\dots,w_t)\}.
\]
The vertex $(u_1,w_2,\dots,w_t)$ is adjacent to all vertices of $D'$,
so $D$ is a total dominating set of $\Gamma$.
Hence
\[
\gamma_t(\Gamma)\le k+1.
\]
This completes the proof of the first statement.

Now suppose, $\gamma_t(\Gamma_i \setminus \Dom(\Gamma_i)) > \gamma(\Gamma_i \setminus \Dom(\Gamma_i)) $ for each $i$ and we want to show that $\gamma_t(\Gamma)=k+1$. We will prove by contradiction. Towards that, we first prove the following claim: 

{\bf Claim:}
Let $S$ be a total dominating set of $\Gamma$. Define $S_j$ to be the projection of $S$ onto $j$-th co-ordinate, that is, 
\[ S_j = \{ v \in \Gamma_j: (v_1, v_2, \dots, v_{j-1}, v, v_{j+1}, \dots, v_t) \in S \mbox{ for some } v_i \} \]
Then there exists $j$ such that $S_j$ dominates $\Gamma_j \setminus \Dom(\Gamma_j)$.

{\bf Proof of the claim:}
Assume the contrary.
Then for every $j$ there exists $x_j\in \Gamma_j$
such that no vertex of $S$ is adjacent to $x_j$
in coordinate $j$.
Consider $v=(x_1,\dots,x_t)$.
For any $s=(s_1,\dots,s_t)\in S$,
there exists some coordinate $j$ such that
$s_j$ is not adjacent to $x_j$.
Hence $s$ is not adjacent to $v$, contradiction. This proves the claim. 

Let $S$ be a minimum total dominating set of $\Gamma$.
By the claim, there exists $j$ such that $S_j$ dominates $\Gamma_j \setminus \Dom(\Gamma_j)$.
Therefore
\[
|S|\ge \gamma(\Gamma_j \setminus \Dom(\Gamma_j))\ge k.
\]

If $|S|=k$, then its projection onto $\Gamma_j \setminus \Dom(\Gamma_j)$
would be a dominating set of size $k$.
But by assumption,
$\gamma_t(\Gamma_j \setminus (\Dom(\Gamma_j))>m$,
so no dominating set of size $m$
can be total in $\Gamma_j \setminus (\Dom(\Gamma_j)$.
Hence within $S$ some vertex would lack
a neighbour in $S$,
contradicting total domination in $\Gamma$.

This completes the proof of the second part of the theorem. 
\end{proof}

\begin{corollary}
\label{Thm:TD-Nilpotent-Commuting}
Let $G = P_1 \times \cdots \times P_t$ $(t \geq 2)$ be a finite nilpotent group such that none of the $P_i$ is abelian. Then
\[
\gamma_t\bigl(\mathcal{C}^{**}(G)\bigr)
\leq 
\min_{1 \leq i \leq t} \gamma\bigl(\mathcal{C}^{**}(P_i)\bigr) + 1.
\]
\end{corollary}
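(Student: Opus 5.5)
The corollary should follow directly from Theorem~\ref{thm:tot-dom-strong-t}. The plan is to rewrite $\mathcal{C}^{**}(G)$ as the proper graph of a strong product and then check that the hypotheses of that theorem hold for each factor.

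\textbf{Step 1: product decomposition.} Since $G = P_1\times\cdots\times P_t$ is the Sylow decomposition of a nilpotent group, Proposition~\ref{Prop:Com_strong-product} gives
\[
\mathcal{C}(G)=\mathcal{C}(P_1)\otimes\cdots\otimes\mathcal{C}(P_t).
\]
In $\mathcal{C}(P_i)$ the dominating vertices are exactly the elements of $Z(P_i)$, because a vertex is adjacent to every other vertex precisely when it commutes with all of $P_i$. The same reasoning gives $\Dom(\mathcal{C}(G))=Z(G)$. Combining this with $Z(G)=Z(P_1)\times\cdots\times Z(P_t)$, or equivalently with the proposition $\Dom(\Gamma_1\otimes\cdots\otimes\Gamma_t)=\Dom(\Gamma_1)\times\cdots\times\Dom(\Gamma_t)$, we get
\[
\mathcal{C}^{**}(G)=\mathcal{C}(P_1)\otimes\cdots\otimes\mathcal{C}(P_t)\setminus\bigl(\Dom(\mathcal{C}(P_1))\times\cdots\times\Dom(\mathcal{C}(P_t))\bigr).
\]

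\textbf{Step 2: hypotheses of Theorem~\ref{thm:tot-dom-strong-t}.} Each $P_i$ is non-abelian, so $\mathcal{C}(P_i)$ is non-complete. Each $P_i$ is a nontrivial $p_i$-group, so $Z(P_i)\neq\{e\}$; in any case $e\in Z(P_i)$, hence $\Dom(\mathcal{C}(P_i))\neq\emptyset$. We also have $t\ge2$ by assumption.

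\textbf{Step 3: conclude.} The first part of Theorem~\ref{thm:tot-dom-strong-t} now gives
\[
\gamma_t\bigl(\mathcal{C}^{**}(G)\bigr)\le\min_{i}\gamma\bigl(\mathcal{C}(P_i)\setminus\Dom(\mathcal{C}(P_i))\bigr)+1=\min_i\gamma\bigl(\mathcal{C}^{**}(P_i)\bigr)+1.
\]

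The only step needing care is the identification of the dominating sets with centers in Step 1. It is routine, but it is what makes the proper graph of the product coincide with $\mathcal{C}^{**}(G)$.
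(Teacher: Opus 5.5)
Your proposal is correct and follows the paper's proof. Like the paper, you identify $\mathcal{C}^{**}(G)$ with the proper graph of $\mathcal{C}(P_1)\otimes\cdots\otimes\mathcal{C}(P_t)$ via Proposition~\ref{Prop:Com_strong-product} and $\Dom(\mathcal{C}(P_i))=Z(P_i)$, then apply the first part of Theorem~\ref{thm:tot-dom-strong-t}, and your explicit check of that theorem's hypotheses (non-completeness, $\Dom(\mathcal{C}(P_i))\neq\emptyset$, $t\ge 2$) is what the paper leaves implicit.
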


\begin{proof}
From the proof of Theorem \ref{MainThm-2(UpperBnd-NilpotentGrp)} we have
\begin{equation*}
\mathcal{C}^{**}(P_1 \times \cdots \times P_t)
= \mathcal{C}(P_1) \otimes \cdots \otimes \mathcal{C}(P_t)
\setminus \bigl(\mathrm{Dom}(\mathcal{C}(P_1)) \times \cdots \times \mathrm{Dom}(\mathcal{C}(P_t))\bigr).
\end{equation*} 
The proof now follows using Theorem \ref{thm:tot-dom-strong-t}. 
\end{proof}

Repeatedly using Theorem \ref{thm:tot-dom-strong-t} and Lemma \ref{thm: Dom_St_Kn}, alongwith \cite[Theorem 4.2]{bera-dey-sajal} and \cite[Theorem 5.1]{BeraDey2022epow}, we can get the results of \cite{Bera2024strongdomination} as a simple corollary.

\begin{corollary}
\cite[Theorems 3.2, 3.3, 3.4. 3.5]{Bera2024strongdomination}
Let G be any finite nilpotent group that does not contain a maximal subgroup of order $2$, and $G_1$ be a finite nilpotent group with no Sylow subgroups that are cyclic or generalized quaternion. Then,
    \[
\gamma_t(\mathcal G_E^{**}(G)) = 
\begin{cases}

2^{k-1}+2,  & \text{if } G = Q_{2^k} \text{ or } \\
            & G = \mathbb Z_n \times Q_{2^k},\ \gcd(n,2)=1 \\[6pt]

2s_1,         & \text{if } G=G_1=P_1 \text{ or } G=P_1 \times \mathbb Z_n,\ \gcd(p_1,n)=1\\[6pt]

s+1       & \text{if } G = G_1 \text{ or } G = G_1 \times \mathbb Z_n,\\ 
            &G_1= \prod_{i=1}^m P_i, (m\geq 2),\ \gcd(|G_1|,n)=1 \\[6pt]

\min\{s,2^{k-2}+1\}+1,  &\text{if }  G = G_1 \times Q_{2^k},\ \gcd(|G_1|,2)=1, \\
                        & \text{or } G = G_1 \times Q_{2^k} \times \mathbb{Z}_n, \\
                        & \gcd(|G_1|,2)=\gcd(n,2)=1.
\end{cases}
\]
where $ s_i $ is the number of distinct $ p_i $-order subgroups of $G_1$ and $s = \min\{ s_1,s_2,\dots, s_m\}$.
\end{corollary}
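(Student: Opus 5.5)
The plan is to treat the four cases as instances of the strong-product machinery of this section, applied to enhanced power graphs. First I will check that for a nilpotent group $G=\prod P_i$, written as a product of its Sylow subgroups, we have
\[
\mathcal G_E(G)=\mathcal G_E(P_1)\otimes\cdots\otimes\mathcal G_E(P_t).
\]
This holds because $\langle x,y\rangle$ is cyclic if and only if each coprime coordinate pair generates a cyclic group, exactly as used in the proof of the corollary to \cite[Theorem 2.5]{BeraDey2022epow}. Cyclic Sylow factors, and more generally a cyclic factor $\mathbb Z_n$ of coprime order, have complete enhanced power graphs. So $\mathcal G_E(G)\cong \mathcal G_E(G')\otimes K_n$, where $G'$ is the product of the non-cyclic Sylow subgroups. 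Every vertex of a complete factor is dominating, so $\Dom$ of the product is $\Dom(\mathcal G_E(G'))\times V(K_n)$. A total-domination analogue of Lemma \ref{thm: Dom_St_Kn} then strips off $\mathbb Z_n$: pair a total dominating set of $\mathcal G_E^{**}(G')$ with a fixed vertex of $K_n$, and project a total dominating set of the product onto the first coordinate. This reduces every case to $n=1$.

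Next come the single-factor cases. For $G=Q_{2^k}$ I will quote \cite[Theorem 4.2]{bera-dey-sajal}, which should give $\gamma_t(\mathcal G_E^{**}(Q_{2^k}))=2^{k-1}+2$. For a $p$-group $P_1$ that is neither cyclic nor generalized quaternion, I will use \cite[Theorem 5.1]{BeraDey2022epow}. The proper enhanced power graph is a disjoint union of $s_1$ pieces, one for each subgroup of order $p$, because two non-identity elements generating a cyclic group share the same order-$p$ subgroup. Each piece has a dominating generator but no vertex dominating it in a total sense by itself. Hence $\gamma=s_1$ and $\gamma_t=2s_1$.

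For products with $m\ge2$, Theorem \ref{thm:tot-dom-strong-t} applies. Its upper bound gives $\gamma_t\le s+1$, using the corollary to \cite[Theorem 2.5]{BeraDey2022epow} (equivalently Theorem \ref{thm:dom-strong-t}) for the value $\min\gamma=s$. Equality requires $\gamma_t(\Gamma_i\setminus\Dom(\Gamma_i))>\gamma(\Gamma_i\setminus\Dom(\Gamma_i))$ for each factor. This holds because $2s_i>s_i$, and for $Q_{2^k}$ it holds because $2^{k-1}+2>2^{k-2}+1$. The latter uses $\gamma(\mathcal G_E^{**}(Q_{2^k}))=2^{k-2}+1$, read off from the structure in \cite{bera-dey-sajal}: the proper graph consists of the cyclic subgroup $\langle a\rangle$ minus the center, plus $2^{k-2}$ cyclic subgroups of order $4$. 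So for $G_1\times Q_{2^k}$ the minimum of the individual domination numbers is $\min\{s,2^{k-2}+1\}$, and adding one gives the final case.

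The main obstacle is the equality part of Theorem \ref{thm:tot-dom-strong-t}. Its published argument is sketchy: it excludes $|S|=k$ only via projections, and one must ensure that no total dominating set of size $k$ exists in the product. I would verify this directly for these graphs. Suppose $|S|=k$. Then by the Claim, the projection $S_j$ dominates the proper graph of $\Gamma_j$ with exactly $\gamma_j$ distinct vertices, where $\gamma_j=k$. This forces each element of $S$ to sit over a distinct vertex of one dominating set $S_j$. Two elements of $S$ adjacent in the product would then be adjacent in coordinate $j$, which makes $S_j$ total-like within $\Gamma_j$. Because the components of $\mathcal G_E^{**}(P_j)$ are disjoint, distinct dominators lie in different components, so they cannot be adjacent, a contradiction. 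I expect to spend the most care on this step, and on checking the $Q_{2^k}$ domination value.
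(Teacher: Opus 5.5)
Your route is the one the paper intends: its proof is only a one-line appeal to Theorem~\ref{thm:tot-dom-strong-t}, Lemma~\ref{thm: Dom_St_Kn} and the cited single-factor results. However, both places where you supply details yourself have genuine holes.

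First, consider the total-domination analogue of Lemma~\ref{thm: Dom_St_Kn}. Projecting a total dominating set $S$ of $H\otimes K_n$, with $H=\mathcal G_E^{**}(G')$, onto the first coordinate yields only a \emph{dominating} set of $H$. The reason is that $(x,y)$ may be totally dominated by $(x,y')$. For $Q_{2^k}\times\mathbb Z_3$, a minimum total dominating set may put two vertices over a single vertex of each component; its projection then has size $2^{k-2}+1$ and is not total.

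You need an exchange step: each $x\in\pi_1(S)$ with no $H$-neighbour in $\pi_1(S)$ has at least two preimages in $S$, so trade one of them for a neighbour of $x$. This works only if $H$ has no isolated vertices, and that can fail. The hypothesis constrains $G$, not $P_1$, so $G=D_8\times\mathbb Z_3$ is admissible in case 2. Yet $\mathcal G_E^{**}(D_8)$ has four isolated reflections and no total dominating set at all, whereas $\gamma_t(\mathcal G_E^{**}(G))=10=2s_1$. So your reduction to $n=1$ is invalid there.

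Case 2 with $n>1$ must be argued directly: $\mathcal G_E^{**}(P_1)\otimes K_n$ is a disjoint union of $s_1$ components, each with a dominating vertex and at least two vertices. For $n=1$, you should say explicitly that the hypothesis on maximal cyclic subgroups of order $2$ is exactly what guarantees each of the $s_1$ pieces has at least two vertices.

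Second, your repair of the equality part still rests on the paper's Claim. Its conclusion is vacuous when some $s\in S$ has $s_j\in\Dom(\Gamma_j)$ (e.g.\ $s_j=e$): then $S_j$ dominates trivially. Nothing then forces $|S_j|\ge\gamma_j$ or injectivity of $s\mapsto s_j$, so your component argument never starts.

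The missing idea is to project only non-dominating coordinates:
\begin{enumerate}
\item Every $s\in S$ has some $i$ with $s_i\notin\Dom(\Gamma_i)$; let $A_i$ collect these $s_i$.
\item If no $A_i$ dominated $H_i=\Gamma_i\setminus\Dom(\Gamma_i)$, choose $x_i\in V(H_i)\setminus N[A_i]$ for every $i$. Then $(x_1,\dots,x_t)$ has no neighbour in $S$.
\item Hence some $A_j$ dominates $H_j$, so $|S|\ge|A_j|\ge\gamma(H_j)\ge k$.
\item If $|S|=k$, then $s\mapsto s_j$ is a bijection from $S$ onto $A_j\subseteq V(H_j)$.
\end{enumerate}
Only at this point does totality of $S$ transfer to $A_j$. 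This contradicts $\gamma_t(H_j)>\gamma(H_j)$, or equivalently your disjoint-components observation.
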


\section{Applications}
\label{sec:applications}

In this section, we compute the domination number and total domination number of the proper commuting graph of some well-known groups. For a group $G$, let $\mathrm{cent}(G)$ denote the set of all centralizers of single elements of $G$, that is, $\mathrm{cent}(G) = \{ C(g) \mid g \in G \}.$ 
We start with {\it AC-group}. Recall that $G$ is called an AC-group if $C(x)$ is abelian for any $x \in G \setminus Z(G)$. In \cite[Proposition 2.2]{haji2019groups}, Haji and Amiri showed the following. 

\begin{proposition}\label{prop:dom-AC-grp}
Let $G$ be any finite non-abelian group. Then, $G$ is an AC-group if and only if $\gamma(\mathcal{C}^{**}(G))=|\cent(G)|-1$. 
\end{proposition}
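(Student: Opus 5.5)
The plan is to translate domination in $\mathcal{C}^{**}(G)$ into coverings of $G^{**}=G\setminus Z(G)$ by the sets $C(x)\setminus Z(G)$. The key observation is that $N[x]=C(x)\setminus Z(G)$. So a set $D\subseteq G^{**}$ is dominating exactly when $\{C(d): d\in D\}$ covers $G$; this is the identity $\sigma_c(G)=\gamma(\mathcal{C}^{**}(G))$ noted in the introduction. Consequently $\gamma(\mathcal{C}^{**}(G))$ is the minimum number of \emph{proper} centralizers $C(x)$, $x\notin Z(G)$, whose union is $G$. There are $|\cent(G)|-1$ proper centralizers, since $C(z)=G$ exactly for $z\in Z(G)$. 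Taking all of them always gives a cover, so $\gamma\le|\cent(G)|-1$ holds for every non-abelian $G$. The whole question is therefore whether some proper centralizer can be omitted.

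\emph{($\Rightarrow$)} Suppose $G$ is an AC-group. First we show that for $x\notin Z(G)$ the element $x$ lies in a unique proper centralizer, namely $C(x)$. Let $x\in C(y)$ with $y\notin Z(G)$. Then $y\in C(x)$, and $C(x)$ is abelian, so every element of $C(x)$ commutes with $y$. Hence $C(x)\subseteq C(y)$. Symmetrically, $x\in C(y)$ and $C(y)$ is abelian, so $C(y)\subseteq C(x)$. Thus $C(y)=C(x)$. It follows that any covering of $G^{**}$ by proper centralizers must contain $C(x)$ for every $x\notin Z(G)$, i.e.\ all $|\cent(G)|-1$ of them. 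This gives equality.

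\emph{($\Leftarrow$)} Suppose $G$ is not an AC-group. Then there is $x\notin Z(G)$ with $C(x)$ non-abelian. We want a proper centralizer that can be dropped from the full covering, and the natural plan is to exhibit a proper centralizer strictly contained in another one. Since $C(x)$ is non-abelian, choose $y\in C(x)\setminus Z(C(x))$. Note $y\notin Z(G)$, since $Z(G)\subseteq Z(C(x))$. Then $x\in C(y)$, so $C(y)\supseteq\langle x\rangle$. The difficulty is that $C(y)$ and $C(x)$ need not be comparable. The cleaner choice is to look at $x$ itself. Every element $w\in C(x)\setminus Z(G)$ satisfies $w\in C(x)$. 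So if some $w\in C(x)\setminus Z(G)$ has $C(w)\ne C(x)$, we can try to drop $C(w)$: every element $g\in C(w)$ needs to be covered by some other centralizer, and $g\in C(g)$ works whenever $C(g)\ne C(w)$.

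Concretely, I would choose a proper centralizer $C(w)$ that is \emph{minimal} under inclusion among those properly containing... rather, here is the simpler idea. Let $C_0$ be a proper centralizer that is minimal under inclusion and does not equal every $C(g)$ for $g\in C_0\setminus Z(G)$. Non-AC gives some $w\in C(x)\setminus Z(G)$ with $y:=w$ not commuting with some $w'\in C(x)$. Then $C(w)\ne C(x)$, because $w'\in C(x)\setminus C(w)$. Now drop $C(x)$ from the covering instead. Each $g\notin Z(G)$ with $C(g)\ne C(x)$ is covered by $C(g)$. Each $g$ with $C(g)=C(x)$ commutes with $w$, so $g\in C(w)$, and $C(w)\ne C(x)$ stays in the covering. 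Hence $|\cent(G)|-2$ centralizers suffice, so $\gamma<|\cent(G)|-1$. This final step, choosing to omit $C(x)$ and covering its generators by $C(w)$, is the main point. The only check needed is that $C(w)\neq C(x)$, and that is exactly what non-abelianness of $C(x)$ provides.
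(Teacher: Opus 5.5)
Your proof is correct and self-contained. The paper does not prove this statement itself; it quotes it from \cite[Proposition 2.2]{haji2019groups}, so there is no in-paper argument to compare against directly.

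Your forward direction is the same observation the paper uses in the proof of Proposition~\ref{prop:total-dom-AC-group}. In an AC-group, two non-central elements commute if and only if they have the same centralizer. Hence the sets $C(x)\setminus Z(G)$ partition $G^{**}$, and no proper centralizer can be left out of a covering.

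Your converse is also sound. A non-abelian $C(x)$ gives a non-central $w\in C(x)$ with $C(w)\neq C(x)$. Then the proper centralizers other than $C(x)$ still cover $G^{**}$, because any $g$ with $C(g)=C(x)$ commutes with $w$.

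The situation is in fact symmetric. Any $g$ with $C(g)=C(w)$ satisfies $x\in C(g)$, so $g\in C(x)$. So the idea you abandoned, dropping $C(w)$ instead of $C(x)$, would have worked equally well. The whole proposition reduces to the equivalence ``$G$ is AC if and only if commuting non-central elements always have equal centralizers.''

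For a clean write-up, remove the false starts. These are the worry that $C(y)$ and $C(x)$ need not be comparable (comparability is never needed), the set $C_0$ that is introduced but never used, and the relabelling $y:=w$. The element $y\in C(x)\setminus Z(C(x))$ you picked first was already the right witness; the only missing step was to omit one of the two centralizers rather than compare them.
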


\begin{proposition}
\label{prop:total-dom-AC-group}
Let $G$ be a finite non-abelian group which is not generalized dihedral. Then, $\gamma_t(\mathcal{C}^{**}(G))= 2|\cent(G)|-2$ if and only if $G$ is an AC-group. 
\end{proposition}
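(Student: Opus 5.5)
The plan is to read the structure of $\mathcal{C}^{**}(G)$ off the centralizers and to prove the two directions separately. First, a preliminary observation. Since $G$ is not a generalized dihedral group, Theorem \ref{CentralizerIs2-GeneralizedDihedralGrp} gives $|C(x)|\ge 3$ for every $x\in G\setminus Z(G)$. Hence, by Proposition \ref{Prop:CommutingGraphHasTDIff}, $\mathcal{C}^{**}(G)$ has no isolated vertices, and $\gamma_t$ is defined. Note that $G$ itself lies in $\cent(G)$, as the centralizer of any central element. So the noncentral centralizers number exactly $|\cent(G)|-1$, and their sets $C(x)\setminus Z(G)$ cover $G^{**}$.

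\emph{AC-group implies equality.} Let $x,y\in G\setminus Z(G)$ with $y\in C(x)$. Since $C(x)$ is abelian and contains $y$, we get $C(x)\subseteq C(y)$. Symmetrically, $x\in C(y)$ and $C(y)$ is abelian, so $C(y)\subseteq C(x)$. Hence $C(x)=C(y)$, and adjacency in $\mathcal{C}^{**}(G)$ is exactly the relation of having the same centralizer. Consequently $\mathcal{C}^{**}(G)$ is the disjoint union of the $|\cent(G)|-1$ cliques $C(x)\setminus Z(G)$. Each of these cliques has at least two vertices, because $Z(G)$ is a proper subgroup of $C(x)$, so $|C(x)\setminus Z(G)|\ge |Z(G)|\ge 1$; combined with $|C(x)|\ge 3$ and Lagrange's theorem this gives at least two noncentral elements. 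In a disjoint union of cliques of order at least $2$, a total dominating set must contain at least two vertices from each clique, since a lone vertex has no neighbour inside the set. Two vertices per clique suffice. Therefore $\gamma_t(\mathcal{C}^{**}(G))=2(|\cent(G)|-1)$.

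\emph{Equality implies AC-group.} Argue by contraposition and suppose $G$ is not an AC-group. Choosing one element from each noncentral centralizer gives a dominating set, so always $\gamma(\mathcal{C}^{**}(G))\le |\cent(G)|-1$. Proposition \ref{prop:dom-AC-grp} then forces $\gamma(\mathcal{C}^{**}(G))\le|\cent(G)|-2$. Next use the standard inequality $\gamma_t\le 2\gamma$, valid for any graph without isolated vertices. Take a minimum dominating set $D$ and, for each $d\in D$, add one neighbour of $d$; the result is a total dominating set. This yields $\gamma_t(\mathcal{C}^{**}(G))\le 2|\cent(G)|-4<2|\cent(G)|-2$, as required.

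The only step needing genuine care is the disjointness of the cliques in the AC case, namely that two noncentral elements commute exactly when they have the same centralizer. This is the point where the AC hypothesis is used. Everything else is bookkeeping with earlier results.
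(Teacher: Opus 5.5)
Your proof is correct and follows the paper's argument. In the AC case you show that $\mathcal{C}^{**}(G)$ is a disjoint union of the $|\cent(G)|-1$ cliques $C(x)\setminus Z(G)$, each requiring exactly two vertices, and the converse reduces to Proposition \ref{prop:dom-AC-grp}. Your version also makes explicit the chain $\gamma_t\le 2\gamma\le 2(|\cent(G)|-1)$, which the paper's one-line converse (``we must have $\gamma(\mathcal{C}^{**}(G))=|\cent(G)|-1$'') leaves implicit.
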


\begin{proof}
Let $G$ be an AC-group. Then, for $a,b \in G \setminus Z(G)$, we have $ab=ba$ if and only if $C(a)=C(b)$. Let $x, y \in G \setminus Z(G)$. If $C(x) \neq C(y)$, then we claim that $C(x)  \cap C(y)= Z(G)$. Otherwise, if $w \in [C(x) \cap C(y)] \setminus Z(G)$ then $wx=xw$. So, $C(w)=C(x)$ and similarly, $C(w)=C(y)$. 
Therefore, centralizers of two non-central elements $x,y$ are different  if and only if $C(x) \setminus Z(G)$ and $C(y) \setminus Z(G)$ are distinct components of the proper commuting graph of $G$. Therefore, $\gamma_t(\mathcal{C}^{**}(G))= 2\gamma(\mathcal{C}^{**}(G))$, completing the proof. 

Conversely, if $\gamma_t(\mathcal{C}^{**}(G))= 2|\cent(G)|-2$, we must have $\gamma(\mathcal{C}^{**}(G))= |\cent(G)|-1$ and thus by Proposition \ref{prop:dom-AC-grp}, $G$ has to be an AC-group.
\end{proof}

As the generalized quaternion group $Q_{2^m}$ is AC-group, by the above proposition and \cite[Section 3]{haji2019groups}, 
for $m \geq 3$, $\gamma_t(\mathcal{C}^{**}(Q_{2^m})=2^{m-1}+2 $. We next show the following. 

\begin{proposition}
\label{prop:}
Let $G$ be a group of order $p_1^{\alpha_1} \cdots p_r^{\alpha_r}$ and $Z(G)$ is of order $p_1^{\beta_1} \cdots p_r^{\beta_r}$. If $\sum_{i=1}^r \beta_i = \sum_{i=1}^r \alpha_i -2$, then $G$ is $AC$-group and subsequently,
$\gamma(\mathcal{C}^{**}(G))=|\cent(G)|-1$ and 
$\gamma_t(\mathcal{C}^{**}(G))=2|\cent(G)|-2$.
\end{proposition}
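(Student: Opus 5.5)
The hypothesis says that $|G:Z(G)|$ is a product of exactly two primes (counted with multiplicity). I will first show that $G$ is an AC-group. After that, the two formulas will follow from Proposition \ref{prop:dom-AC-grp} and Proposition \ref{prop:total-dom-AC-group}.

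\textbf{Step 1: $G$ is an AC-group.} Fix $x\in G\setminus Z(G)$. The subgroup $\langle Z(G),x\rangle$ is abelian, because $x$ commutes with $Z(G)$. It strictly contains $Z(G)$, and it lies in $C(x)$. Since $x$ is noncentral, $C(x)$ is a proper subgroup of $G$. So we have the chain
\[
Z(G)\subsetneq \langle Z(G),x\rangle \subseteq C(x)\subsetneq G .
\]
Let $\Omega(n)$ denote the number of prime factors of $n$ counted with multiplicity. By Lagrange's theorem each strict inclusion raises $\Omega$ of the order by at least one. The hypothesis says $\Omega(|G|)-\Omega(|Z(G)|)=2$. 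Hence the middle inclusion must be an equality: $C(x)=\langle Z(G),x\rangle$. In particular $C(x)$ is abelian, so $G$ is an AC-group. This is the only step with real content, and it is short.

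\textbf{Step 2: the domination number.} Since $G$ is non-abelian (as $\sum\alpha_i-\sum\beta_i=2>0$) and AC, Proposition \ref{prop:dom-AC-grp} gives $\gamma(\mathcal{C}^{**}(G))=|\cent(G)|-1$.

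\textbf{Step 3: the total domination number.} Proposition \ref{prop:total-dom-AC-group} requires $G$ not to be generalized dihedral, so this hypothesis has to be checked; I expect it to be the main obstacle. What the proof of that proposition actually uses is that every $C(x)\setminus Z(G)$ has at least two elements, so that each clique component admits a total dominating pair. So the real task is to rule out $|C(x)|=2$.

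\begin{itemize}
\item If $Z(G)\neq 1$, then $|C(x)|\geq 2|Z(G)|\geq 4$ by Lagrange's theorem, since $C(x)$ strictly contains $Z(G)$. So $|C(x)\setminus Z(G)|\geq 2$.
\item If $Z(G)=1$, then $|G|=pq$, and the hypothesis is satisfied by $S_3$, which is generalized dihedral. Here the statement must be read with that exclusion in force, as in Theorem \ref{Thm:TotalDomOfProperCommutinGraph}. For non-dihedral such $G$, Theorem \ref{CentralizerIs2-GeneralizedDihedralGrp} gives $|C(x)|\geq 3$ for every noncentral $x$, so again $|C(x)\setminus Z(G)|\geq 2$.
\end{itemize}

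With this in hand, the components of $\mathcal{C}^{**}(G)$ are the cliques $C(x)\setminus Z(G)$, each of size at least two. Hence $\gamma_t$ equals twice the number of components, which is $2(|\cent(G)|-1)$.
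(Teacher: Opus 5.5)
Your proof is correct and takes the same route as the paper: Lagrange's theorem forces $[C(x):Z(G)]$ to be prime, which gives the AC property, and the two AC-group propositions then supply the formulas. Your direct identification $C(x)=\langle Z(G),x\rangle$ is cleaner than the paper's assertion that $C(x)\cap C(y)=Z(G)$, which as written fails when $C(x)=C(y)$.

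You are also right that the $\gamma_t$ formula needs an exclusion that the statement omits. For $G\cong D_{2q}$ with $q$ an odd prime (e.g.\ $S_3$) the hypothesis holds, but $\mathcal{C}^{**}(G)$ has isolated vertices, so no total dominating set exists. Conversely, groups such as $D_8$ satisfy the hypothesis yet are literally excluded from the total-domination proposition for AC-groups. Checking $|C(x)\setminus Z(G)|\ge 2$ directly, as you do, is the right fix, and the paper's proof passes over both points.
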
 

\begin{proof}
Let $x, y \in G \setminus Z(G)$. As $|C(x)|$ and $|C(y)|$ divides $|G|$, and $|Z(G)|$ divides $|C(x)|$ and $|C(y)|$, using the condition $\sum_{i=1}^r \beta_i = \sum_{i=1}^r \alpha_i -2$, we must have $C(x) \cap C(y)= Z(G)$, completing the proof. 
\end{proof}

\begin{corollary}
Let $G$ be any non-abelian group of order $pq,$ where $p\neq q$ are primes with $p<q.$ Then $\gamma(\mathcal{C}^{**}(G))=q+1.$ Moreover, if $p>2,$ then $\gamma_t(\mathcal{C}^{**}(G))=2(q+1).$    
\end{corollary}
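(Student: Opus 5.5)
The plan is to use the preceding proposition: a non-abelian group of order $pq$ has $|G|=p^1q^1$, so $\sum\alpha_i=2$. Its center must be trivial, since otherwise $G/Z(G)$ would have prime order, hence be cyclic, and then $G$ would be abelian. Thus $\sum\beta_i=0=\sum\alpha_i-2$. The proposition then tells us that $G$ is an AC-group, that $\gamma(\mathcal{C}^{**}(G))=|\cent(G)|-1$, and that $\gamma_t(\mathcal{C}^{**}(G))=2|\cent(G)|-2$ whenever total domination makes sense. So everything reduces to showing $|\cent(G)|=q+2$, i.e.\ that there are exactly $q+1$ distinct centralizers of noncentral elements.

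To count them, I would use the standard structure of non-abelian groups of order $pq$ with $p<q$. The Sylow $q$-subgroup $Q$ is normal, and $p\mid q-1$. The number of Sylow $p$-subgroups is $n_p\in\{1,q\}$. If $n_p=1$, then $G\cong P\times Q$ would be abelian, so $n_p=q$.

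For a noncentral $x$, $C(x)$ is a proper subgroup containing $\langle x\rangle$. Since $Z(G)=1$, we get $|C(x)|\in\{p,q\}$, so $C(x)=\langle x\rangle$ is a Sylow subgroup. Distinct Sylow subgroups of prime order intersect trivially, so the centralizers of noncentral elements are exactly $Q$ and the $q$ Sylow $p$-subgroups. Together with $C(e)=G$, this gives $|\cent(G)|=q+2$, and therefore $\gamma(\mathcal{C}^{**}(G))=q+1$. As a check, this agrees with the lower bound $p+1$ from Theorem~\ref{MainThm-1(UpperBndAndLwrBnd)}, since $q+1\ge p+1$.

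For the total domination statement, suppose $p>2$. Then every centralizer of a noncentral element has order at least $p\ge 3$. By Proposition~\ref{Prop:CommutingGraphHasTDIff}, a total dominating set exists; equivalently, $G$ is not generalized dihedral, since it has odd order. Proposition~\ref{prop:total-dom-AC-group} then gives $\gamma_t(\mathcal{C}^{**}(G))=2|\cent(G)|-2=2(q+1)$. This can also be seen directly: $\mathcal{C}^{**}(G)$ is a disjoint union of $q+1$ cliques, each with at least two vertices, and each clique needs exactly two vertices in a total dominating set.

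The only step with real content is the centralizer count, specifically showing $n_p=q$ and that $C(x)=\langle x\rangle$. I expect the order argument above to handle both, so I do not anticipate a serious obstacle.
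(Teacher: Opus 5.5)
Your proposal is correct and follows the paper's route. The paper derives this corollary from the preceding proposition: trivial center gives $\sum\beta_i=\sum\alpha_i-2$, so $G$ is an AC-group with $\gamma=|\cent(G)|-1$ and $\gamma_t=2|\cent(G)|-2$. The paper leaves the count $|\cent(G)|=q+2$ implicit, and your Sylow argument supplies it, while your use of $p>2$ (odd order, so $G$ is not generalized dihedral and every centralizer has order at least $3$) correctly rules out the case $p=2$, where $G\cong D_{2q}$ has no total dominating set.
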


Let $\mathrm{nacent}(G)$ denote the set of all non-abelian element-centralizers of $G$. 

\begin{proposition}
Let $G$ be a non-abelian group which is not generalized dihedral and $|\mathrm{nacent}(G)| = 2$. Then
 \[
\gamma_t(\mathcal{C}^{**}(G)) = 2\big(|\mathrm{cent}(G)| - |\mathrm{cent}(C(a))|\big),
\]
where $C(a)$ is the unique proper non-abelian centralizer of $G$.
\end{proposition}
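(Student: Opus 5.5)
The plan is to describe the connected components of $\mathcal{C}^{**}(G)$ completely and then count them. The hypothesis $|\mathrm{nacent}(G)|=2$ means the non-abelian element-centralizers are exactly $G=C(z)$ for $z\in Z(G)$ and one proper subgroup $H:=C(a)$. Every other centralizer $C(x)$ with $x\notin Z(G)$ is then abelian.

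The first step is to show that two non-central elements $x,y$ with $C(x),C(y)$ abelian and distinct satisfy $C(x)\cap C(y)=Z(G)$, exactly as in the proof of Proposition~\ref{prop:total-dom-AC-group}. Indeed, if $w\in C(x)\cap C(y)\setminus Z(G)$, then $C(w)\supseteq C(x)$ since $C(x)$ is abelian and contains $w$. If $C(w)$ is abelian this forces $C(w)=C(x)$, and likewise $C(w)=C(y)$. If instead $C(w)=H$, then $x,y\in H$, a case handled in the next step.

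The second step, which I expect to be the main obstacle, is to show that the vertices of $H\setminus Z(G)$ are not adjacent to any vertex outside $H$. For $x\in Z(H)\setminus Z(G)$ we have $C(x)\supseteq H$, so $C(x)=H$ by maximality among the two non-abelian centralizers, and its neighbours lie in $H$. For $x\in H\setminus Z(H)$, $C(x)$ is abelian, and I need $C(x)\subseteq H$. The idea is to use $a\in Z(H)$: since $x\in H=C(a)$, we get $a\in C(x)$, and because $C(x)$ is abelian, every element of $C(x)$ commutes with $a$, that is, $C(x)\subseteq C(a)=H$. This gives $C(x)=C_H(x)$. Hence $H\setminus Z(G)$ is a union of components; in fact it is a single component, since $a$ is adjacent to all of it.

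Moreover, $x\mapsto C(x)=C_H(x)$ identifies the abelian centralizers of $G$ that meet $H\setminus Z(G)$ with the proper centralizers in $\mathrm{cent}(H)\setminus\{H\}$. So exactly $|\mathrm{cent}(H)|-1$ abelian centralizers of $G$ lie inside $H$. One must check that an abelian centralizer $C(x)$ with $x\notin H$ cannot meet $H\setminus Z(G)$; this follows from the first step and the second step, since a common non-central element $w\in H$ would force $C(x)=C(w)\subseteq H$.

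The count then goes as follows. The components of $\mathcal{C}^{**}(G)$ are the single component $H\setminus Z(G)$, together with the cliques $C(x)\setminus Z(G)$ for the abelian centralizers $C(x)\not\subseteq H$. There are
\[
|\mathrm{cent}(G)|-2-(|\mathrm{cent}(H)|-1)
\]
such cliques, so the number of components is $|\mathrm{cent}(G)|-|\mathrm{cent}(H)|$. Each component has at least two vertices: since $G$ is not generalized dihedral, Theorem~\ref{CentralizerIs2-GeneralizedDihedralGrp} and Proposition~\ref{Prop:CommutingGraphHasTDIff} give $|C(x)|\ge 3$ for every non-central $x$. Every component therefore needs at least $2$ vertices of a total dominating set, because a single vertex does not dominate itself. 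Conversely, $2$ suffice in each component: in a clique take any two vertices, and in $H\setminus Z(G)$ take $a$ together with any neighbour of $a$. Together these yield $\gamma_t(\mathcal{C}^{**}(G))=2\bigl(|\mathrm{cent}(G)|-|\mathrm{cent}(C(a))|\bigr)$.
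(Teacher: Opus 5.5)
Your proof is correct, but it takes a different route from the paper's.

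\textbf{The paper's route.} It quotes the Haji--Amiri formula $\gamma(\mathcal{C}^{**}(G))=|\mathrm{cent}(G)|-|\mathrm{cent}(C(a))|$. It then argues that a minimum dominating set $D$ is independent: if $x_i\sim x_j$ with $C(x_i)$ abelian, then $C(x_i)\subseteq C(x_j)$, so $x_i$ can be dropped. Finally it asserts $\gamma_t=2\gamma$.

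\textbf{Your route.} You determine the connected components of $\mathcal{C}^{**}(G)$ explicitly. The key observation is that $a\in Z(C(a))$, which forces $C(x)\subseteq C(a)$ for every noncentral $x\in C(a)$. Hence $C(a)\setminus Z(G)$ is a single component with universal vertex $a$. The remaining vertices split into cliques $C(x)\setminus Z(G)$, one for each abelian centralizer not contained in $C(a)$. Counting the abelian centralizers inside $C(a)$ via $\mathrm{cent}(C(a))\setminus\{C(a)\}$ gives $|\mathrm{cent}(G)|-|\mathrm{cent}(C(a))|$ components. Each component has at least two vertices, so each needs, and admits, exactly two vertices of a total dominating set.

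\textbf{What your route buys.} Independence of a minimum dominating set does not by itself imply $\gamma_t=2\gamma$. In the $5$-cycle every minimum dominating set is independent, yet $\gamma_t=3<4=2\gamma$. So the paper's final step silently relies on exactly the component structure you prove.

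Your decomposition also re-derives the Haji--Amiri value of $\gamma$ as a by-product, since every component has a universal vertex. Your argument is therefore self-contained rather than resting on that citation.

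One small simplification: an abelian $C(x)$ with $x\notin C(a)$ cannot meet $C(a)\setminus Z(G)$. A common noncentral $w$ would give $x\in C(w)\subseteq C(a)$ directly by your second step.
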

\begin{proof}
By \cite[Proposition 2.3]{haji2019groups}, we get $\gamma(\mathcal{C}^{**}(G)) = |\mathrm{cent}(G)| - |\mathrm{cent}(C(a))|.$ 
Let 
$D=\{x_1, \cdots, x_t\}$ be a minimal dominating set of $\mathcal{C}^{**}(G)$.
We claim that $D$ is an independent set in $\mathcal{C}^{**}(G).$ Suppose not. Let $x_i\sim x_j,$ for some $i\neq j.$ We consider two centralizers $C(x_i)$ and $C(x_j).$ By the given condition, at least one of these two centralizers is abelian. Suppose $C(x_i)$ is abelian. Since $x_ix_j=x_jx_i$, then it is easy to see that $C(x_i)\subseteq C(x_j).$ Now consider $D'= D \setminus \{x_i \}$. Then $D'$ is also a dominating set, contradicting the minimality of $D$. Hence, $\gamma_t(\mathcal{C}^{**}(G))=2\gamma(\mathcal{C}^{**}(G)).$ This completes the proof. 
\end{proof}

In \cite{haji2019groups}, Haji and Amiri showed that for any extraspecial $p$-group, the domination number of the proper commuting graph is $p+1$. In the following result, we give another family of $p$-groups, for which the domination number is $p+1$. 

\begin{proposition}
\label{cor:heisenberg} 
Let $G$ be any group of size $p^r$. If $|Z(G)|=p^{r-2}$, then \[
\gamma_t(\mathcal{C}^{**}(G))= 2(p+1).
\] In particular, if $G$ is any group of order $p^3,$ we have $\gamma(\mathcal{C}^{**}(G))= p+1$ and $\gamma_t(\mathcal{C}^{**}(G))= 2(p+1).$
\end{proposition}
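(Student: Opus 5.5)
Let $|G|=p^r$ with $|Z(G)|=p^{r-2}$. The plan is to show that $G$ is an AC-group with exactly $p+1$ proper centralizers, pairwise intersecting in $Z(G)$, and then read off both numbers.

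\textbf{Step 1: the centralizers.} Take $x\in G\setminus Z(G)$. Then $Z(G)\subsetneq \langle Z(G),x\rangle\subseteq C(x)\subsetneq G$. Since orders are powers of $p$, this forces $|C(x)|=p^{r-1}$, and also $C(x)=\langle Z(G),x\rangle$. The latter group is abelian, because it is generated by central elements and one further element. So $G$ is an AC-group; this is exactly the exponent-sum condition of the preceding proposition, with $\sum\beta_i=\sum\alpha_i-2$.

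\textbf{Step 2: disjointness.} If $w\in C(x)\setminus Z(G)$, then $C(w)=\langle Z(G),w\rangle$ has order $p^{r-1}$ and is contained in the abelian group $C(x)$, so $C(w)=C(x)$. Hence two distinct proper centralizers meet exactly in $Z(G)$. The sets $C(x)\setminus Z(G)$ therefore partition $G\setminus Z(G)$, and each has size $p^{r-1}-p^{r-2}$. The number of proper centralizers is
\[
\frac{p^r-p^{r-2}}{p^{r-1}-p^{r-2}}=p+1,
\]
so $|\cent(G)|=p+2$, counting $G=C(e)$.

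\textbf{Step 3: the two numbers.} By Step 2, $\mathcal{C}^{**}(G)$ is a disjoint union of $p+1$ complete graphs, each with $p^{r-1}-p^{r-2}\ge 2$ vertices. The domination number is therefore $p+1$. Equivalently, Proposition \ref{prop:dom-AC-grp} gives $|\cent(G)|-1=p+1$; this also matches Proposition \ref{Thm:lowe-bound-dom-any-grp}, since $(|G|-|Z(G)|)/M=p+1$.

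For total domination, each clique component needs at least two chosen vertices, and two vertices in a component suffice. This gives $\gamma_t=2(p+1)$. One could instead invoke Proposition \ref{prop:total-dom-AC-group}, but that requires $G$ not to be generalized dihedral. For $p$ odd this is automatic. For $p=2$ the generalized dihedral groups are not excluded (for instance $D_8$), so the direct clique argument is the safer route; it needs only that each component has at least two vertices, i.e.\ $p^{r-2}(p-1)\ge 2$. This holds for $r\ge3$, and also when $p>2$.

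\textbf{The case $r=3$.} For a group of order $p^3$, the case $r\le 2$ is abelian and excluded. A non-abelian group of order $p^3$ has $|Z(G)|=p$, since $G/Z(G)$ cannot be cyclic, so the hypothesis holds with $r=3$. The main point needing care is the order argument in Step 1 together with the degenerate possibility of singleton cliques, which would make the graph have no total dominating set; this possibility is ruled out because $r\ge3$.
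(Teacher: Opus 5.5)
Your proof is correct and follows the same route as the paper. In both, every non-central centralizer has order $p^{r-1}$, and distinct such centralizers intersect exactly in $Z(G)$, so $\mathcal{C}^{**}(G)$ is a disjoint union of $p+1$ cliques.

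Your write-up is more complete than the paper's in a few places:
\begin{itemize}
\item You derive $|C(x)|=p^{r-1}$ from $Z(G)\subsetneq\langle Z(G),x\rangle\subseteq C(x)\subsetneq G$.
\item You obtain $\gamma=p+1$ by counting the cliques.
\item You justify $\gamma_t=2(p+1)$ from the clique structure, using that each clique has $p^{r-2}(p-1)\ge 2$ vertices.
\end{itemize}
By contrast, the paper assumes a minimum dominating set of size $p+1$ without proof. It then deduces $\gamma_t$ from that set being independent, which on its own does not justify the conclusion.

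One small point: in the general statement you never say why $r\ge 3$. It is automatic, because $Z(G)\neq 1$ in a nontrivial $p$-group, so $p^{r-2}\ge p$.
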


\begin{proof}
Let $D=\{x_1, \cdots, x_{p+1}\}$ be a minimum dominating set of $\mathcal{C}^{**}(G).$ Clearly, $x_1, \cdots, x_{p+1}\in G\setminus Z(G).$ Note that $|C(x_i)|=p^{r-1},$ and $C(x_i)\cap C(x_j)$ is a subgroup of both $C(x_i)$ and $C(x_j),$ for all $i\neq j\in [p+1].$ Therefore, $C(x_i)\cap C(x_j)=Z(G).$ That is, $D$ is also an independent set in $C^{**}(G).$ Hence $\gamma_t(\mathcal{C}^{**}(G))= 2(p+1).$    
\end{proof}


    


Next, we consider all non-abelian groups of order $p^{4}$, where $p$ is an odd prime. There are $10$ such groups, namely:

\begin{enumerate}
    \item $G_{1} \cong (\mathbb{Z}_{p^{2}} \times \mathbb{Z}_{p}) \rtimes \mathbb{Z}_{p}$.
    
    \item $G_{2} \cong \mathbb{Z}_{p^{2}} \rtimes \mathbb{Z}_{p^{2}}$.
    
    \item $G_{3} \cong \mathbb{Z}_{p^{3}} \rtimes \mathbb{Z}_{p}$.
    
    \item $G_{4} \cong \mathbb{Z}_{p} \times \big((\mathbb{Z}_{p} \times \mathbb{Z}_{p}) \rtimes \mathbb{Z}_{p}\big)$.
    
    \item $G_{5} \cong \mathbb{Z}_{p} \times (\mathbb{Z}_{p^{2}} \rtimes \mathbb{Z}_{p})$.
    
    \item $G_{6} \cong (\mathbb{Z}_{p^{2}} \times \mathbb{Z}_{p}) \rtimes \mathbb{Z}_{p}$.
    
    \item $G_{7} \cong (\mathbb{Z}_{p} \times \mathbb{Z}_{p} \times \mathbb{Z}_{p}) \rtimes \mathbb{Z}_{p}$.
    
    \item $G_{8} \cong (\mathbb{Z}_{p^{2}} \rtimes \mathbb{Z}_{p}) \rtimes \mathbb{Z}_{p}$.
    
    \item $G_{9} \cong (\mathbb{Z}_{p^{2}} \times \mathbb{Z}_{p}) \rtimes \mathbb{Z}_{p}$.
    
    \item $G_{10} \cong (\mathbb{Z}_{p^{2}} \times \mathbb{Z}_{p}) \rtimes \mathbb{Z}_{p}$.
\end{enumerate}

Out of these $10$ groups, we have $|Z(G_i)| = p^{2}$ for all $i \in \{1,\ldots,6\}$, and $|Z(G_i)| = p$ for all $i \in \{7,\ldots,10\}$.

\begin{proposition}
Let $G$ be a finite non-abelian $p$-group of order $p^{4}$, where $p$ is an odd prime. Then
\[
\gamma\big(C^{**}(G)\big)=
\begin{cases}
p+1, & \text{if } G \in \{G_{1},\ldots,G_{6}\}, \\[4pt]
p^{2}+1, & \text{if } G \in \{G_{7},\ldots,G_{10}\}.
\end{cases}
\]
Moreover, 
\[
\gamma_t\big(C^{**}(G)\big)=
\begin{cases}
2(p+1), & \text{if } G \in \{G_{1},\ldots,G_{6}\}, \\[4pt]
2(p^{2}+1), & \text{if } G \in \{G_{7},\ldots,G_{10}\}.
\end{cases}
\]
\end{proposition}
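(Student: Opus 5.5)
Both cases reduce to showing that $G$ is an AC-group and then counting centralizers. For $G\in\{G_1,\dots,G_6\}$ we have $|Z(G)|=p^2=p^{4-2}$. So Proposition~\ref{cor:heisenberg} (or the preceding proposition with $\sum\beta_i=\sum\alpha_i-2$) already shows that $G$ is an AC-group. Every noncentral centralizer has order $p^3$, and any two distinct ones intersect exactly in $Z(G)$.

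Counting noncentral elements then gives the number of proper centralizers:
\[
\frac{p^4-p^2}{p^3-p^2}=p+1.
\]
Proposition~\ref{prop:dom-AC-grp} yields $\gamma=|\cent(G)|-1=p+1$. Since $G$ has odd order it is not generalized dihedral, so Proposition~\ref{prop:total-dom-AC-group} gives $\gamma_t=2(p+1)$.

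For $G\in\{G_7,\dots,G_{10}\}$, where $|Z(G)|=p$, take a noncentral $x$. Then $Z(G)\langle x\rangle$ has order at least $p^2$ and lies in $C(x)$, while $C(x)\neq G$. Hence $|C(x)|\in\{p^2,p^3\}$.

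The key step, and the main obstacle, is to show that every such centralizer is abelian. For $|C(x)|=p^2$ this is automatic. For $|C(x)|=p^3$, suppose $C(x)$ were nonabelian. Then $Z(C(x))$ has order $p$ and contains both $Z(G)$ and $x$, so $x\in Z(G)$, a contradiction. Thus every $C(x)$ is abelian and $G$ is an AC-group.

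Exactly as in Proposition~\ref{prop:total-dom-AC-group}, the sets $C(x)\setminus Z(G)$ are then the connected components of $\mathcal{C}^{**}(G)$, each of them a clique. Consequently $\gamma=|\cent(G)|-1$ and $\gamma_t=2\gamma$, and it remains to show $|\cent(G)|-1=p^2+1$.

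Let $a$ be the number of proper centralizers of order $p^3$ and $b$ the number of order $p^2$. Counting noncentral elements gives
\[
a(p^3-p)+b(p^2-p)=p^4-p .
\]
I would pin down $a$ by using the fact that groups of maximal class of order $p^4$ (which is what $|Z(G)|=p$ forces here) have a unique abelian maximal subgroup $A=C_G(G_2)$ of order $p^3$. Every element outside $A$ then has centralizer of order $p^2$, namely $Z(G)\langle x\rangle$. So $a=1$, and $b=(p^4-p^3)/(p^2-p)=p^2$, giving $p^2+1$ proper centralizers in total.

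If uniqueness of the abelian maximal subgroup is not available as a citation, I would argue it directly. Two distinct abelian subgroups of order $p^3$ would intersect in a subgroup of order $p^2$ that is central in $G$, since it is centralized by both and they generate $G$. That contradicts $|Z(G)|=p$. The existence of at least one such subgroup can be checked from the presentations, or alternatively from the counting equation above using $b\ge1$ and divisibility. Together these establish $\gamma=p^2+1$ and $\gamma_t=2(p^2+1)$.
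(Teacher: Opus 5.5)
Your proof is correct, and it takes a different route from the paper's.

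\textbf{The paper's route.} For $G_1,\dots,G_6$ the paper simply quotes Proposition~\ref{cor:heisenberg}. The proof of that proposition starts from a minimum dominating set that is already assumed to have $p+1$ elements. For $G_7,\dots,G_{10}$ the paper cites Malviy--Kakkar for $\gamma=p^2+1$. It gives no argument at all for $\gamma_t=2(p^2+1)$ in that case.

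\textbf{Your route.} You prove uniformly that every non-abelian group of order $p^4$ is an AC-group. The key observation is that a non-abelian $C(x)$ of order $p^3$ would have a center of order $p$ containing both $Z(G)$ and $x$, forcing $x\in Z(G)$. Hence $\mathcal{C}^{**}(G)$ is a disjoint union of cliques $C(x)\setminus Z(G)$, each of size at least $2$. Both $\gamma$ and $\gamma_t$ then reduce, via Propositions~\ref{prop:dom-AC-grp} and~\ref{prop:total-dom-AC-group}, to counting proper centralizers.

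Your counts are right. When $|Z(G)|=p^2$ there are $p+1$ proper centralizers. When $|Z(G)|=p$, the uniqueness argument for an abelian maximal subgroup is sound: two distinct ones would meet in a central subgroup of order $p^2$. That uniqueness gives exactly one proper centralizer of order $p^3$ and $p^2$ of order $p^2$.

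\textbf{What each approach buys.} Your approach gives a self-contained proof that does not depend on the explicit list of ten groups or on the external citation. It also supplies the total-domination step for $|Z(G)|=p$, which the paper's proof omits. The paper's version is shorter only because it outsources the work.

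\textbf{One step to tighten.} The element-count equation $a(p^3-p)+b(p^2-p)=p^4-p$ does not by itself rule out $a=0$; it would just give $b=p^2+p+1$. What excludes $a=0$ is the class equation. Elements with centralizer of order $p^2$ lie in conjugacy classes of size $p^2$, so $p^2\mid b(p^2-p)$, that is, $p\mid b$. This fails for $b=p^2+p+1$. Phrase your ``divisibility'' remark this way, or rely on the standard existence of an abelian subgroup of index $p$ in a group of order $p^4$.
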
 

 \begin{proof}
If $G \in \{G_{1},\ldots,G_{6}\}$, then $|Z(G)| = p^{2}$. 
Therefore, by Theorem~\ref{cor:heisenberg}, we obtain 
that $
\gamma\big(C^{**}(G)\big) = p+1
$ and $
\gamma_t\big(C^{**}(G)\big) = 2(p+1).
$ 
Now, let $G \in \{G_{7},\ldots,G_{10}\}$. 
Then, by \cite[ Corollary~3.1]{MalviyKakkar2025CommutingGraphp4}, it follows that 
$
\gamma\big(C^{**}(G)\big) = p^{2}+1.
$
This completes the proof. 
\end{proof}


\begin{proposition}
Let \( G = \mathrm{Sz}(q) \), where $q=2^{2n+1}.$  Then
$\gamma_t(\mathcal{C}^{**}(G)) = 2\gamma(\mathcal{C}^{**}(G)),$
where
\begin{align*}
\gamma(\mathcal{C}^{**}(G))&=(q^2+1)+\frac{q^2(q^2+1)}{2} + \frac{q^2(q-1)(q^2+1)}{4(q - 2r + 1)} + \frac{q^2(q-1)(q^2+1)}{4(q + 2r + 1)} \text { and}  \\
r&=2^n.
\end{align*}
\end{proposition}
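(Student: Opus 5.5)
The plan is to show that $\mathrm{Sz}(q)$ is an AC-group. Then Proposition~\ref{prop:dom-AC-grp} and Proposition~\ref{prop:total-dom-AC-group} give
\[
\gamma(\mathcal{C}^{**}(G))=|\cent(G)|-1, \qquad \gamma_t(\mathcal{C}^{**}(G))=2|\cent(G)|-2=2\gamma(\mathcal{C}^{**}(G)).
\]
This leaves two tasks: verify the hypotheses, and count the proper centralizers.

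\textbf{Hypotheses.} $G=\mathrm{Sz}(q)$ is simple, so $Z(G)=1$. It is not generalized dihedral, since its order $q^2(q^2+1)(q-1)$ is divisible by $q^2\ge 64$, whereas a generalized dihedral group $D(A)$ has an abelian subgroup of index $2$ (impossible for a nonabelian simple group).

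\textbf{Centralizer structure.} We recall the standard facts (Suzuki). Every nontrivial element lies in exactly one of four families of subgroups:
\begin{itemize}
\item a Sylow $2$-subgroup $Q$, of order $q^2$, with $q^2+1$ conjugates;
\item a cyclic subgroup $A_0\cong \mathbb{Z}_{q-1}$;
\item cyclic subgroups $A_{\pm}\cong\mathbb{Z}_{q\pm 2r+1}$.
\end{itemize}
These subgroups form a partition of $G$, and each nontrivial element of odd order has centralizer equal to the cyclic subgroup containing it. For a $2$-element $x\in Q$, two cases arise:
\begin{itemize}
\item if $x$ is an involution, $x\in Z(Q)$ with $|Z(Q)|=q$, and $C(x)=Q$, which is nonabelian;
\item if $x$ has order $4$, then $C(x)$ has order $2q$ and equals $\langle x\rangle Z(Q)$, which is abelian.
\end{itemize}

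This is the main obstacle. Involution centralizers are nonabelian, so $G$ is \emph{not} an AC-group, and Proposition~\ref{prop:dom-AC-grp} cannot be used directly. I would instead argue the equality $\gamma_t=2\gamma$ by hand, in the style of the proof for $|\mathrm{nacent}(G)|=2$.

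\textbf{Components and $\gamma$.} The commuting graph $\mathcal{C}^{**}(G)$ splits into connected components, one for each member of the partition. A component from a cyclic subgroup $A$ is the complete graph on $A\setminus\{1\}$. For a Sylow $2$-subgroup $Q$, the component is $Q\setminus\{1\}$, where:
\begin{itemize}
\item the involutions are joined to everything in the component;
\item two order-$4$ elements commute iff they lie in the same coset of $Z(Q)$.
\end{itemize}
Each such component is dominated by a single involution, so $\gamma$ equals the number of components. The number of conjugates of each cyclic subgroup is $|G|/|N_G(A)|$, using the normalizer orders
\[
|N_G(A_0)|=2(q-1), \qquad |N_G(A_\pm)|=4(q\pm2r+1).
\]
These give the terms $q^2(q^2+1)/2$ and $q^2(q-1)(q^2+1)/\bigl(4(q\pm2r+1)\bigr)$, and together with the $q^2+1$ Sylow $2$-subgroups they yield the displayed formula.

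\textbf{Total domination.} Every component has at least two vertices, so $\gamma_t\le 2\gamma$: take two adjacent vertices in each component. Conversely, a total dominating set must contain at least two vertices in every component, because a vertex of the set needs a neighbour in the set and neighbours lie in the same component. Hence $\gamma_t=2\gamma$.
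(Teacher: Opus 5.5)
Your final argument is correct. It rests on the same fact as the paper's proof: the partition of $\mathrm{Sz}(q)$ into the $q^2+1$ Sylow $2$-subgroups and the conjugates of the cyclic subgroups of orders $q-1$ and $q\pm 2r+1$.

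The difference is in how much is actually proved. The paper lists this partition and then cites Proposition 5.3 of Haji--Amiri for both the value of $\gamma$ and the equality $\gamma_t=2\gamma$. You make the mechanism explicit. Every nontrivial element has its centralizer inside the partition member containing it: $C(z)=Q$ for an involution $z\in Q$, $C(x)=\langle x\rangle Z(Q)\le Q$ for $x$ of order $4$, and $C(a)=A$ on the cyclic pieces. Hence each member minus the identity is a connected component of $\mathcal{C}^{**}(G)$ with a dominating vertex and at least two vertices. From this, $\gamma$ and $\gamma_t/2$ both equal the number of components by elementary arguments. The normalizer orders $2(q-1)$ and $4(q\pm 2r+1)$ then produce the displayed count.

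Your version is self-contained and shows exactly which group-theoretic facts are needed. The paper's is shorter but leaves the doubling of $\gamma_t$ unexplained. You were also right to drop your initial AC-group plan, since $C(z)=Q$ is nonabelian. Your paragraph verifying the AC-proposition hypotheses is therefore superfluous.

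Two small corrections.

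First, in the upper bound $\gamma_t\le 2\gamma$, ``two adjacent vertices in each component'' does not suffice in a Sylow component. Two commuting elements of order $4$ lie in a single coset of $Z(Q)$, so they miss the order-$4$ elements in the other $q-2$ nontrivial cosets. There you must take an involution together with one neighbour.

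Second, you tacitly assume $n\ge 1$ (simplicity, $q^2\ge 64$). This assumption is genuinely needed: for $q=2$ the displayed formula gives $21>|\mathrm{Sz}(2)|=20$.
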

\begin{proof}
The Suzuki group \( G \) contains subgroups \( F, A, B, \) and \( C \), where:
\[
|F| = q^2, \quad |A| = q - 1, \quad |B| = q - 2r + 1, \quad |C| = q + 2r + 1,
\]
(see \cite[Chapter~XI, Theorems 3.10 and 3.11]{Finite-grp-III-Huppert-Blackburn}).

Moreover, \( G \) admits a partition:
\[
\mathcal{P} = \{ A^x, B^x, C^x, F^x : x \in G \},
\]
where for each \( x \in G \), the subgroups \( A^x, B^x, C^x \) are cyclic and \( F^x \) is a Sylow \( 2 \)-subgroup of \( G \). Now, by Proposition~5.3 in~\cite{haji2019groups}, it follows that
\[
\gamma_t\big(\mathcal{C}^{**}(G)\big) = 2\,\gamma\big(\mathcal{C}^{**}(G)\big).
\]
    
\end{proof}

\begin{proposition}
\label{Thm:Dom of Commuting(G),G=pgl(2, q), q odd prime}
Let \( G =\mathrm{PGL}(2,p^n) \), where \( p \) is an odd prime. Then the domination number of $\mathcal{C}^{**}(G)$ is equal to 
\begin{align*}
\gamma(\mathcal{C}^{**}(G))=&p^{2n} + p^n + 1 \text{ and }\\  
\gamma_t(\mathcal{C}^{**}(G))=&2\left(p^{2n} + p^n + 1\right).
\end{align*}

\end{proposition}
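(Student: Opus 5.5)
The plan is to describe $\mathcal{C}^{**}(G)$ for $G=\mathrm{PGL}(2,q)$, $q=p^n$ odd, through its family of maximal abelian centralizers. Note first that $Z(G)=1$. I will use the standard facts on $\mathrm{PGL}(2,q)$ from Huppert–Blackburn and Dickson.

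\begin{itemize}
\item The Sylow $p$-subgroups are elementary abelian of order $q$; there are $q+1$ of them, and they intersect pairwise trivially.
\item The split tori are cyclic of order $q-1$; there are $q(q+1)/2$ of them.
\item The nonsplit tori are cyclic of order $q+1$; there are $q(q-1)/2$ of them.
\item Every nonidentity element of $G$ lies in exactly one of these subgroups. Call this family $\mathcal{H}$. It is a partition, since two distinct members meet trivially.
\item Every nonidentity element $y$ that is not an involution satisfies $C(y)=H$ for the unique $H\in\mathcal{H}$ containing $y$.
\item The involutions are exactly the elements of order $2$ of the tori, and their centralizers are dihedral of order $2(q\mp1)$.
\end{itemize}

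Counting gives $|\mathcal{H}|=(q+1)+\tfrac{q(q+1)}{2}+\tfrac{q(q-1)}{2}=q^2+q+1$, which is the claimed value.

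For the upper bound, choose one generator or nonidentity element $x_H$ in each $H\in\mathcal{H}$. Every noncentral $g$ lies in some $H$, and $H$ is abelian, so $g$ commutes with $x_H$. Hence $\{x_H\}$ is a dominating set and $\gamma\le q^2+q+1$. This is essentially Proposition~\ref{Thm:DomofCommtingGraphAndMaximalCyclicSubgrp} with the Sylow $p$-subgroups treated as blocks.

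For the lower bound, choose in each $H$ an element $y_H$ of order greater than $2$. This is possible when $|H|\ge 3$; it fails only for the split tori when $q=3$, and I would treat $\mathrm{PGL}(2,3)\cong S_4$ separately or assume $q\ge5$. Then $C(y_H)=H$. Any dominating set must contain a vertex of $N[y_H]=H\setminus\{e\}$, and these sets are pairwise disjoint. So $\gamma\ge q^2+q+1$.

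For $\gamma_t$, the upper bound comes from taking two distinct nonidentity elements in each $H$, which gives $2(q^2+q+1)$. The main obstacle is the matching lower bound. A total dominating set $S$ must meet every $H\setminus\{e\}$, because $y_H$ needs a neighbour, and that neighbour lies in $C(y_H)=H$.

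I would then split on the type of vertex chosen in $H$.
\begin{itemize}
\item If $S\cap H$ contains a non-involution $z$, then $z$'s own neighbour also lies in $C(z)=H$, so $|S\cap H|\ge 2$.
\item The delicate case is when $S\cap H$ consists only of the involution $t_H$ of $H$, which exists since $|H|$ is even for tori. Then $t_H$ may be dominated by involutions of other tori inside its dihedral centralizer.
\end{itemize}

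For this case I would follow the partition argument of \cite[Proposition~5.3]{haji2019groups}, as used for $\mathrm{Sz}(q)$. Alternatively, I would use a discharging and counting argument: define a charge that assigns each torus represented only by its involution to a neighbouring torus. The point is that each involution neighbour $t'$ of $t_H$ is itself the only $S$-element available to $y_{H'}$ in its own torus $H'$, while $y_H$ still forces a separate vertex in $H$. If this step fails, the fallback is to show directly that replacing such an involution pair by two elements of the same torus never decreases the size of $S$, which reduces to the first case and yields $|S|\ge 2(q^2+q+1)$.
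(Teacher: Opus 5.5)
Your computation of $\gamma$ follows the paper's partition argument but is more careful. The upper bound comes from the $q^2+q+1$ abelian parts, and the lower bound from the pairwise disjoint closed neighbourhoods $N[y_H]=H\setminus\{e\}$ of self-centralizing elements. This is correct for $q\ge 5$. The case $q=3$ that you set aside is a genuine exception, not a technicality. Since $\mathrm{PGL}(2,3)\cong S_4$, the three double transpositions together with one element from each subgroup of order $3$ form a dominating set. The seven sets $\{(123),(132)\},\dots$ and $\{(12),(34),(12)(34)\},\dots$ are pairwise disjoint closed neighbourhoods. Hence $\gamma=7\neq 13$.

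The real gap is the lower bound $\gamma_t\ge 2(q^2+q+1)$, which you cannot complete because it is false. The ``delicate case'' you isolate, a torus $H$ represented in $S$ only by its involution $t_H$, can occur for every torus at once. Let $S$ consist of all $q^2$ involutions of $G$ (exactly one in each of the $q^2$ tori) together with two nonidentity elements from each of the $q+1$ Sylow $p$-subgroups. Then:
\begin{enumerate}
\item every non-involution of a torus $H$ is adjacent to $t_H$;
\item every involution $t$ is adjacent to another involution of $S$, because $C(t)$ is dihedral of order $2(q\mp 1)$; for example, the images of $\mathrm{diag}(1,-1)$ and $\left(\begin{smallmatrix}0&1\\1&0\end{smallmatrix}\right)$ commute in $\mathrm{PGL}(2,q)$, since their two products differ by the scalar $-1$;
\item every nonidentity element of a Sylow $p$-subgroup is adjacent to one of the two chosen elements there.
\end{enumerate}
So $\gamma_t\le q^2+2q+2<2(q^2+q+1)$. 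Combined with your own observations (every part meets $S$, and every Sylow $p$-subgroup meets $S$ at least twice), this gives $\gamma_t(\mathcal{C}^{**}(\mathrm{PGL}(2,q)))=q^2+2q+2$ for $q\ge 5$. For instance, $\mathrm{PGL}(2,5)\cong S_5$ has $\gamma_t=37$, not $62$.

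In your discharging sketch, the failing sentence is that ``$y_H$ still forces a separate vertex in $H$''. In fact $t_H\in H=C(y_H)$ is already a neighbour of $y_H$, so nothing further is forced. Your fallback also goes the wrong way: replacing $t_H$ by two elements of $H$ increases $|S|$. The paper's own proof simply asserts $\gamma_t=2\gamma$ without argument, so it has the same defect.
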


\begin{proof}
By \cite[II, Satz 8.5]{Endliche-gruppen-I-Huppert}, the set
\[
\mathcal{P} = \{ U^{x},\ T^{x},\ V^{x} : x \in G \}
\]
is a partition of \( G \). Moreover:
\begin{itemize}
    \item $U$ is an elementary abelian $p$-group of order $p^n$;
    \item $T$ is a cyclic group of order $p^n - 1$;
    \item $V$ is a cyclic group of order $p^n + 1$.
\end{itemize}

The number of conjugates of $U$, $T$, and $V$ in $G$ are
\[
p^n + 1, \quad \frac{p^n(p^n + 1)}{2}, \quad \text{and} \quad \frac{p^n(p^n - 1)}{2},
\]
respectively.

For each $x \in G$, the subgroups $U^{x}$, $T^{x}$, and $V^{x}$ arise as centralizers of certain elements. Moreover, $\mathcal{P}$ forms a partition of the group $G$. Hence,
\begin{align*}
\gamma\big(\mathcal{C}^{**}(G)\big)
&= p^n + 1 + \frac{p^n(p^n + 1)}{2} + \frac{p^n(p^n - 1)}{2} \\
&= p^{2n} + p^n + 1.
\end{align*}

Therefore,
\[
\gamma_t\big(\mathcal{C}^{**}(G)\big) = 2\left(p^{2n} + p^n + 1\right).
\]
\end{proof}

\begin{proposition}
Let $G = \mathrm{PSL}(2,q)$, where $q$ is a power of a prime $p$.

\begin{enumerate}
\item If $q \in \{3,4,5\}$, then
\[
\gamma_t(\mathcal{C}^{**}(G)) = 
\begin{cases}
10 & \text{if } q = 3, \\
42 & \text{if } q = 4 \text{ or } 5.
\end{cases}
\]

\item If $q > 5$, then
\[
\gamma_t(\mathcal{C}^{**}(G)) = 2(q^2 + q + 1).
\]
\end{enumerate}
\end{proposition}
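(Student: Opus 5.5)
The plan is to run the same argument used above for $\mathrm{Sz}(q)$ and $\mathrm{PGL}(2,p^n)$. First I would get $\gamma(\mathcal{C}^{**}(G))$ from a partition of $G$ into abelian centralizers, and then show that every such component needs two vertices of a total dominating set.

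By Dickson's description (Huppert, II.8), $G=\mathrm{PSL}(2,q)$ with $d=\gcd(2,q-1)$ has a partition into three families of abelian subgroups:
\begin{itemize}
\item the $q+1$ Sylow $p$-subgroups, which are elementary abelian of order $q$;
\item $q(q+1)/2$ cyclic subgroups of order $(q-1)/d$;
\item $q(q-1)/2$ cyclic subgroups of order $(q+1)/d$.
\end{itemize}
The total number of components is therefore
\[
(q+1)+\tfrac{q(q+1)}{2}+\tfrac{q(q-1)}{2}=q^2+q+1 .
\]
The center $Z(G)$ is trivial, and each component $P$ contains an element $g_P$ with $C(g_P)=P$: any nontrivial unipotent element for a Sylow $p$-subgroup, and a generator for a cyclic component. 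This needs the cyclic orders $(q\pm1)/d$ to be at least $3$ where necessary, which is where $q>5$ enters.

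\emph{Domination number.} Choosing one $g_P$ from each component gives a dominating set, because every nontrivial element lies in some $P$ and commutes with $g_P$. Conversely, the closed neighbourhood of $g_P$ is $P\setminus\{e\}$, and these sets are pairwise disjoint. Hence any dominating set meets every component, and $\gamma(\mathcal{C}^{**}(G))=q^2+q+1$; this is the value computed by Haji and Amiri. The upper bound $\gamma_t\le 2(q^2+q+1)$ follows by adding to each $g_P$ a second element of $P\setminus\{e\}$, which exists since $|P|\ge3$.

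\emph{Lower bound for $\gamma_t$.} This is the step I expect to be the main obstacle. I would first try to invoke \cite[Proposition 5.3]{haji2019groups} directly, as in the Suzuki case, whose setting is a partition of $G$ by centralizer-type abelian subgroups; this would give $\gamma_t=2\gamma$.

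If that does not apply verbatim, I would argue as follows. Let $T$ be a total dominating set and $P$ a component. The vertex $g_P$ needs a neighbour in $T$, and that neighbour lies in $C(g_P)=P$.
\begin{itemize}
\item For $q$ even, or for components with no involution, every nontrivial element $t\in P$ has $C(t)=P$. So the neighbour $t$ of $t$ required in $T$ is again in $P$, and $|T\cap P|\ge2$.
\item For $q$ odd, one component of each dihedral involution centralizer contains the involution $t$, whose centralizer is dihedral and hence larger than $P$. Here I must rule out the saving in which $T\cap P=\{t\}$ and $t$ is dominated from outside $P$. This requires a second element $g'\in P\setminus\{t\}$ with $C(g')=P$ (again using $|P|\ge3$); its neighbour in $T$ is forced to be $t$.
\end{itemize}
The involution case then needs an injective-charging argument: each outside element used to totally dominate such a $t$ has to be charged to a component that already has its own forced vertex, without double counting. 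This bookkeeping is the delicate part of the proof.

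\emph{Small cases.} For $q=3$, $G\cong A_4$ is partitioned into $V_4$ and four subgroups $C_3$. All of these are centralizers and the group is an AC-group, so Proposition~\ref{prop:total-dom-AC-group} gives $\gamma_t=2\cdot5=10$.

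For $q=4,5$, $G\cong A_5$ is again an AC-group. Its distinct centralizers of non-identity elements are
\begin{itemize}
\item $5$ Klein four-groups;
\item $10$ subgroups $C_3$;
\item $6$ subgroups $C_5$;
\end{itemize}
that is, $21$ in all, so $|\cent(G)|=22$ and Proposition~\ref{prop:total-dom-AC-group} gives $\gamma_t=42$. This is consistent with the general formula not applying to these $q$, since $q^2+q+1=21$ for $q=4$ but $31$ for $q=5$.
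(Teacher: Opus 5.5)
\textbf{The gap is fatal for odd $q$.} The step that fails is the lower bound $\gamma_t(\mathcal{C}^{**}(G))\ge 2(q^2+q+1)$ for odd $q>5$. No injective-charging argument can establish it, because the configuration you are trying to rule out can be realised for every involution at once. That configuration is an involution-containing torus $P$ with $T\cap P=\{t\}$, where $t$ is dominated from outside $P$.

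Write $q\equiv\epsilon\pmod 4$ with $\epsilon\in\{1,-1\}$. The $q(q+\epsilon)/2$ cyclic tori of order $(q-\epsilon)/2$ each contain exactly one involution. The centralizer $C(t)$ of an involution is dihedral of order $q-\epsilon\ge 8$, so each involution commutes with $(q-\epsilon)/2$ further involutions. Let $T$ consist of all involutions together with two non-identity elements from each Sylow $p$-subgroup and from each torus of odd order $(q+\epsilon)/2$. This $T$ is a total dominating set:
\begin{itemize}
\item every involution has an involution neighbour in $T$;
\item every element of order at least $3$ in an even-order torus commutes with that torus's involution;
\item every element of an odd-order component is adjacent to one of the two chosen elements of that component.
\end{itemize}
Hence $|T|=2(q^2+q+1)-q(q+\epsilon)/2$. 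Your own forcing argument shows this is the exact value: each odd-order component needs two vertices of $T$, and each even-order torus needs at least one, via an element of order $\ge 3$ whose centralizer is that torus. For $q=7$ this gives $\gamma_t=16+56+21=93$ rather than $114$. For $q=9$ (so $G\cong A_6$) it gives $137$ rather than $182$. So part (2) of the statement is false for odd $q>5$, and your plan cannot be completed there.

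\textbf{What does go through.} For even $q$, $G=\mathrm{SL}(2,q)$ is an AC-group: the centralizers of non-identity elements are the Sylow $2$-subgroups and the tori of orders $q\pm1$. Proposition~\ref{prop:total-dom-AC-group} then gives $2(|\cent(G)|-1)=2(q^2+q+1)$ directly. Your small cases $q=3,4,5$, handled via the same proposition with $|\cent(A_4)|=6$ and $|\cent(A_5)|=22$, are correct. They also cover $q=3$, which the paper's proof omits.

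\textbf{The paper's proof has the same hole.} For $q>5$ it only cites Haji--Amiri, which concerns the domination number. Passing from $\gamma=q^2+q+1$ to $\gamma_t=2\gamma$ is justified only for AC-groups. For odd $q>5$, $\mathrm{PSL}(2,q)$ is not one, since its involution centralizers are non-abelian dihedral. No appeal to a partition result, like the one used for $\mathrm{Sz}(q)$, can fill this gap, because the claimed value itself is wrong for odd $q$.
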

\begin{proof}
When $q=4$ or $q=5$, the groups $\mathrm{PSL}(2,q)$ are AC-groups. Thus, by Proposition \ref{prop:total-dom-AC-group}, we are done. 
When $q>5$,
the proof follows from \cite[Proposition 5.2 ii]{haji2019groups}.   
\end{proof}

\section{Spectrum of $\frac{\gamma(\mathcal{C}^{**}(G))}{|G|}$ } 
\label{sec:spectrum-domination} 

The domination number of the proper commuting graph measures how efficiently the noncentral elements of a group can be controlled through commutation. Since every dominating set is a subset of the vertex set, one trivially has
\[ \gamma(\mathcal{C}^{**}(G)) < |G|.\] 
However, this bound is far from sharp for most groups. It is therefore natural to ask how large the domination number can be relative to the size of the group. Equivalently, one may study the extremal behaviour of the ratio
\[ 
\frac{\gamma(\mathcal{C}^{**}(G))}{|G|}.
\] 
Determining the maximum possible value of this ratio, in a sense, identifies the least commutative groups from the perspective of domination in commuting graphs. The following theorem shows that among all finite non-abelian groups, the symmetric group $S_3$ uniquely maximizes this ratio, and hence provides the extremal example for domination in proper commuting graphs.

\begin{theorem}
\label{thm:domination-number-highest}
For any non-abelian group $G$, 
\[ \frac{\gamma(\mathcal{C}^{**}(G))}{|G|}  \leq \displaystyle \frac{2}{3}\] and equality holds if and only if $G$ is isomorphic to $S_3$. 
\end{theorem}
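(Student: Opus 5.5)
We bound $\gamma(\mathcal{C}^{**}(G))/|G|$ directly, without finer structure theory, and then identify the equality case. Write $z=|Z(G)|$ and $n=|G|$. Theorem \ref{thm:haji} gives
\[
\gamma(\mathcal{C}^{**}(G))\le \frac{n-z+t}{2},
\]
where $t$ is the number of centralizers of order $2$. The first step is to control $t$.

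If $t>0$, then some non-central $x$ has $|C(x)|=2$. By Theorem \ref{CentralizerIs2-GeneralizedDihedralGrp}, $G\cong D(A)$ with $|A|=m$ odd. Then $Z(G)$ is trivial, $T$ is trivial, and Theorem \ref{thm:dom-gen-dih} does not apply verbatim. However, its argument shows that $\mathcal{C}^{**}(G)$ is a disjoint union of complete graphs: one on $A\setminus\{e\}$, plus $m$ isolated vertices (the involutions). Hence $\gamma = m+1$, and
\[
\frac{\gamma}{|G|}=\frac{m+1}{2m}\le \frac{2}{3},
\]
with equality iff $m=3$, i.e. $G\cong S_3$.

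If $t=0$, the bound becomes $\gamma\le (n-z)/2<n/2<2n/3$. Alternatively, the domination number is at most the number of non-central elements counted in pairs: every non-central $x$ commutes with some other non-central element (for instance $xz$ with $z\neq e$ central, or $x^{-1}$ or $x^2$ when these are distinct and non-central). This gives strict inequality.

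So the plan is as follows. First, split by whether $G$ has a non-central element with centralizer of order $2$. In the generalized dihedral case, compute $\gamma$ exactly as $1+m$ and check that $(m+1)/(2m)$ is maximized at $m=3$. In the other case, use Theorem \ref{thm:haji} with $t=0$ to get ratio at most $1/2$. Equality in the theorem therefore forces the first case with $m=3$, and conversely $\gamma(\mathcal{C}^{**}(S_3))=4$, since $S_3$ is $D(\mathbb{Z}_3)$: one triangle-free clique $\{(123),(132)\}$ plus three isolated transpositions, giving $4/6=2/3$.

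The main obstacle is the careful treatment of the case $t>0$, where $A$ is elementary abelian-like ($T$ trivial) so Theorem \ref{thm:dom-gen-dih} must be re-derived rather than cited. This requires checking that each involution $a x$ has centralizer exactly $\{e, ax\}$, so it is isolated, and that $A\setminus\{e\}$ forms a single clique. This follows from the commutation computation in the proof of Theorem \ref{thm:dom-gen-dih}, since $a^2=b^2$ forces $a=b$ in odd order. A secondary point is making sure that, when $t=0$, we are legitimately using the inequality of Theorem \ref{thm:haji} (which holds for all non-abelian $G$), so no further case analysis is needed.
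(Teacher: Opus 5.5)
Your proof is correct, but it takes a different route from the paper's.

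The paper uses the bound $\gamma(\mathcal{C}^{**}(G))\le |\mathrm{cent}(G)|-1$, taking one representative per distinct proper centralizer. It then invokes the external classification \cite[Theorem 4.5]{amiri2017groups}: $|\mathrm{cent}(G)|\ge 2|G|/3$ holds only for $S_3$, $D_{10}$ and $S_3\times S_3$. For every other group this gives the strict inequality immediately, and the three exceptions are checked by hand ($\gamma=4,6,4$ respectively).

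You instead split on $t$:
\begin{enumerate}
\item When $t=0$, Theorem \ref{thm:haji} (for $t=0$ this is just Ore's bound $\gamma\le |V|/2$ for graphs without isolated vertices) gives a ratio below $1/2$.
\item When $t>0$, Theorem \ref{CentralizerIs2-GeneralizedDihedralGrp} forces $G\cong D(A)$ with $|A|=m$ odd. Then $\mathcal{C}^{**}(G)=K_{m-1}\sqcup mK_1$, so $\gamma=m+1$ exactly.
\end{enumerate}

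What each approach buys:
\begin{enumerate}
\item The paper's proof is shorter, but it rests on a nontrivial outside classification.
\item Your argument uses only results proved or quoted in the paper, and it yields more: the ratio is below $1/2$ unless some centralizer has order $2$, in which case it equals $\frac{m+1}{2m}$. That is exactly the dichotomy the paper needs afterwards for the spectrum result with $r>1/2$, so your argument essentially proves both theorems at once.
\item Splitting on $t$ rather than on ``generalized dihedral or not'' also avoids treating $D(A)$ with $|A|$ even separately, since there $Z(G)\neq\{e\}$ forces $t=0$.
\end{enumerate}

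Two cosmetic points; neither affects correctness, since your main line for $t=0$ uses Theorem \ref{thm:haji} directly.
\begin{enumerate}
\item Theorem \ref{thm:dom-gen-dih} fails to apply verbatim because of its hypothesis that $A$ is not elementary abelian (e.g.\ $A=\mathbb{Z}_3$). It is not because $A$ is ``elementary abelian-like''.
\item Your ``alternatively'' sentence for $t=0$ is imprecise as written. For an involution $x$ with $Z(G)=\{e\}$, none of $xz$, $x^{-1}$, $x^2$ supplies a neighbour; the correct reason is simply $|C(x)|\ge 3$. Also, passing from ``no isolated vertices'' to $\gamma\le (|G|-|Z(G)|)/2$ is Ore's theorem, not a direct pairing.
\end{enumerate}
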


\begin{proof}
By \cite[Theorem 4.5]{amiri2017groups}, we have 
\[ \mathrm{cent(G)} \geq \frac{2|G|}{3}\]
if and only if $G$ 
is $S_3$, $D_{10}$ or $S_3 \times S_3$. Therefore if $G$ is none of these $3$ groups, we must have 
\[ \frac{\gamma(\mathcal{C}^{**}(G))}{|G|}  < \displaystyle \frac{2}{3}.\]
Moreover, the domination number
of $\mathcal{C}^{**}(D_{10})$ 
is $6$ and by using Proposition \ref{Prop:Com_strong-product} and Theorem \ref{thm:dom-strong-t}, we have 
\[  \frac{\gamma(\mathcal{C}^{**}(S_3 \times S_3))}{|S_3 \times S_3|} = \frac{4}{36}.\] 
Therefore, if $G \neq S_3$, we have 
$ \gamma(\mathcal{C}^{**}(G))  < \displaystyle \frac{2|G|}{3}$,
completing the proof. 
\end{proof}

In the next result, we comment on the spectrum of
the numbers
$\gamma(\mathcal{C}^{**}(G))/|G|$ and
we prove the following.

\begin{theorem}
For any $\frac{1}{2} < r < 1$, there exists a group $G$ that satisfies $\frac{\gamma(\mathcal{C}^{**}(G))}{|G|}=r$ if and only if $r$ is of the form $\frac{k}{2k-1}$ for some positive integer $k\geq2$. 
\end{theorem}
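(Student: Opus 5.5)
The proof will rest on a structural lemma: if $\gamma(\mathcal{C}^{**}(G))>|G|/2$, then $G$ is a generalized dihedral group $D(A)$ with $A$ of odd order. Granting this, the ratio is computed exactly. For $|A|=m$ odd, the set $T$ of elements of order at most $2$ in $A$ is trivial and $Z(D(A))=\{e\}$. The argument of Theorem~\ref{thm:dom-gen-dih} applies verbatim with $T=\{e\}$, even though that theorem excludes elementary abelian $A$; the hypothesis is not needed when $T$ is trivial. It shows that $\mathcal{C}^{**}(D(A))$ is a disjoint union of the clique $A\setminus\{e\}$ and $m$ isolated vertices (the involutions). Hence
\[
\gamma(\mathcal{C}^{**}(D(A)))=m+1, \qquad \frac{\gamma}{|G|}=\frac{m+1}{2m}.
\]
Writing $k=(m+1)/2$, which is an integer with $k\ge 2$ since $m\ge 3$ is odd, this equals $\frac{k}{2k-1}$. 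Conversely, for each $k\ge2$ the group $D(\mathbb{Z}_{2k-1})=D_{2(2k-1)}$ realizes $r=\frac{k}{2k-1}$, and every such value lies strictly between $\tfrac12$ and $1$. So the lemma gives both directions.

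For the lemma, I would split on whether some non-central $x$ has $|C(x)|=2$.

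\emph{Case 1: no such $x$.} Then $d=\min(|C(g)|-|Z(G)|)\ge 2$, as shown in the proof of Theorem~\ref{prop:haji-improved}. I would then aim to show directly that $\gamma\le \frac{|G|-|Z(G)|}{2}\le\frac{|G|}{2}$. Greedily choose representatives: pick a non-central $x_1$ and discard the at least two non-central elements of $C(x_1)$, then pick an undominated $x_2$ and discard $C(x_2)\setminus Z(G)$, and so on. Each chosen vertex removes at least two new vertices as long as the remaining undominated vertex $x_i$ has some non-central neighbour that is not yet dominated. If it does not, $x_i$ is still dominated by a single extra vertex. A cleaner route is to use the bound of Theorem~\ref{thm:haji}: since $t=0$ here, it gives $\gamma\le(|G|-|Z(G)|)/2\le |G|/2$ directly. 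So ratio $>\tfrac12$ is impossible in Case 1.

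\emph{Case 2: some $x$ has $|C(x)|=2$.} Then Theorem~\ref{CentralizerIs2-GeneralizedDihedralGrp} gives $G\cong D(A)$ with $|A|$ odd, as required.

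The main obstacle is Case 1. One must check that Theorem~\ref{thm:haji} applies with $t=0$ exactly when no centralizer of a non-central element has order $2$. A centralizer $C(g)$ of order $2$ with $g$ central would force $|G|=2$, so this is fine. One must also make sure the inequality $\gamma\le |G|/2$ cannot be an equality that would push the ratio up. Since we only need to exclude $r>\tfrac12$, any equality at exactly $\tfrac12$ (for instance $D_8$, $Q_8$, where $t>0$ anyway) is harmless. Case 2 forces exactly the generalized dihedral groups of twice odd order, and the computation above finishes the proof.
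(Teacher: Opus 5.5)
Your proposal is correct and is essentially the paper's argument: both rest on Theorem~\ref{thm:haji} with $t=0$ (which the paper invokes through Theorem~\ref{prop:haji-improved}) together with Theorem~\ref{CentralizerIs2-GeneralizedDihedralGrp}, followed by the direct count $\gamma=m+1$ for $D(A)$ with $|A|=m$ odd and the examples $D_{2(2k-1)}$. The difference is that you split on whether some non-central element has a centralizer of order $2$, rather than on whether $G$ is generalized dihedral. This puts the generalized dihedral groups with $|A|$ even under the $t=0$ bound, so you do not need Theorem~\ref{thm:dom-gen-dih} for them. Your direct computation for odd $|A|$ also rightly sidesteps that theorem's ``not elementary abelian'' hypothesis, which, read literally, would exclude $A=\mathbb{Z}_3$.
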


\begin{proof}
We first show that for every 
positive integer $k \geq 2$, there exists a group $G$ whose proper commuting graph satisfies
$\frac{\gamma(\mathcal{C}^{**}(G))}{|G|}=\frac{k}{2k-1}$. 
Consider the dihedral 
group $D_{4k-2}$ of order 
$4k-2$. It is easy to verify 
that the domination number 
of the proper commuting graph of $D_{4k-2}$ is $2k$. 

If $G$ is not generalized dihedral, using Proposition \ref{prop:haji-improved}, we have $\frac{\gamma(\mathcal{C}^{**}(G))}{|G|}< \frac{1}{2}$. 
 Let $D(A)$ be a generalized dihedral group. If $|A|$ is even, by Theorem \ref{thm:dom-gen-dih}, the ratio of the domination number of the proper commuting graph and the size of the group is less than $1/2$. If $|A|$ is odd, say, of the form $2k-1$, then the domination number is $2k$, completing the proof. 
\end{proof}  

\section*{Acknowledgements} 
SB and UJ gratefully acknowledges the support of the NBHM research project (Reference No.~02011/29/2025NBHM(RP)/R\&DII/11951). The authors also thank NBHM, India, for funding this work. Furthermore, the authors appreciate the excellent research environment provided by Dhirubhai Ambani University, Gandhinagar, Gujarat. HKD acknowledges the INSPIRE Faculty Fellowship (Reference No. DST/INSPIRE/\linebreak04/2024/004712; Faculty Registration No. IFA24-MA 205) for support during the preparation of this work, and thanks the Department of Science and Technology (DST), India, for funding. He also appreciates excellent research environment provided by the Department of Mathematics at the Indian Institute of Technology Jammu. SB thanks Peter Cameron for fruitful discussions regarding Theorem \ref{CentralizerIs2-GeneralizedDihedralGrp}. HKD thanks Manideepa Saha for fruitful discussions regarding Theorems \ref{thm:dom-strong-t} and \ref{thm:tot-dom-strong-t}.

\bibliographystyle{amsplain}
\bibliography{gen-inv-lcp.bib}
\end{document}